\newcommand{\eqdst}{\stackrel{\mathsf{d}}{=}}
\newcommand{\permmapset}{\mathscr{S}}
\newcommand{\permset}{\mathscr{P}}
\newcommand{\tspace}{\mathbb{T}}
\newenvironment{manualtheorem}[1]{%
	\IfBlankTF{#1}
	{}
	{}%
	\manualtheoreminner
}{\endmanualtheoreminner}
\title{Concentration Inequalities for Exchangeable Tensors and Matrix-valued Data}
\newcommand*\samethanks[1][\value{footnote}]{\footnotemark[#1]}
\author{%
	Chen Cheng\thanks{Department of Statistics, University of Chicago} \and Rina Foygel Barber\samethanks
}
\begin{document}
\maketitle

\begin{abstract}

	We study concentration inequalities for structured weighted sums of random data, including (i) tensor inner products and (ii) sequential matrix sums. We are interested in tail bounds and concentration inequalities for those structured weighted sums under exchangeability, extending beyond the classical framework of independent terms.
	
	We develop Hoeffding and Bernstein bounds provided with structure-dependent exchangeability. Along the way, we recover known results in weighted sum of exchangeable random variables and i.i.d.\ sums of random matrices to the optimal constants. Notably, we develop a sharper concentration bound for combinatorial sum of matrix arrays than the results previously derived from Chatterjee's method of exchangeable pairs.
	
	For applications, the richer structures provide us with novel analytical tools for estimating the average effect of multi-factor response models and studying fixed-design sketching methods in federated averaging. We apply our results to these problems, and find that our theoretical predictions are corroborated by numerical evidence.  
\end{abstract}

\tableofcontents

\section{Introduction}\label{sec:intro}
A central goal in concentration inequalities involves deriving tail bounds for a centered random variable $Z - \E Z$ that equals to the weighted sum of random data, with the simplest form as
\begin{align*}
	Z = \<w, X\> := w_1 X_1 + w_2 X_2 + \cdots + w_N X_N,
\end{align*}
where $w = (w_1, \cdots, w_N)^\top$ are fixed weights in $\R$ and $X = (X_1, \cdots, X_N)^\top$ are i.i.d.\ random variables in $[-1,1]$. In the i.i.d.\ case, a number of classical results on bounding $\P(Z - \E Z \geq t)$ are well-known such as Hoeffding and Bernstein bounds, as well as Azuma-Hoeffding bound in the martingale case~\cite[Chp.~2]{wainwright2019high}. In the recent work~\citep{foygel2024hoeffding}, \citeauthor{foygel2024hoeffding} generalizes Hoeffding and Bernstein bounds to the exchangeable case, where the random data $(X_1, \cdots, X_N)$ have the same joint distribution as $(X_{\pi(1)}, \cdots, X_{\pi(N)})$ under any permutation $\pi$ on $[N]$.

It is then natural to ask, raised in part by~\citeauthor{foygel2024hoeffding}, \textit{how do such bounds extend to exchangeability with richer structures}? We consider a fixed weight (or an ordered set of weights) $W \in \tspace$ and a random data object $X \in \tspace'$ from vector spaces $\tspace, \tspace'$, where $X$ is distributionally invariant (exchangeable) under the symmetry group associated with $\tspace'$. We summarize the quantity of interest $Z$ as a bilinear form $Z:=\llangle W, X \rrangle \in \tspace''$, where $\tspace'' = \R$ or $\R^{n \times m}$. Our goal is to provide Hoeffding-type and Bernstein-type tail bounds for $\P(Z - \E Z > t)$ or $\P(\norm{Z - \E Z} > t)$. We consider two settings:
\begin{enumerate}
	\item \textbf{Mode-exchangeable tensors:} Our first setting considers a fixed $K$-mode tensor $W$, and a random tensor $X$ satisfying exchangeability along each mode (to be defined below). For intuition, here we describe a special case, with $K=2$ modes: $W$ and $X$ are both matrices, $W, X \in \R^{N \times M}$, and $X$ is \textbf{row-wise and column-wise exchangeable}, i.e., the row-wise and column-wise permuted matrix $(X_{\pi(i)\pi'(j)})_{i\in [N], j \in [M]}$ has the same distribution as $X =(X_{ij})_{i\in [N], j \in [M]}$, for any permutations $\pi$ on $[N]$ and $\pi'$ on $[M]$. We then study the concentration of the real-valued inner product $Z= \sum_{i=1}^N \sum_{j=1}^M W_{ij}X_{ij} \in \R$ (or, more generally, $Z$ is given by the tensor inner product).
	\item \textbf{Matrix-valued exchangeable data:} Let $W_1, \cdots, W_N \in \R^{p \times q}$ be fixed and let $X_1, \cdots, X_N \in \R^{q \times r}$ be random, such that $(X_1, \cdots, X_N)$ have the same joint distribution as $(X_{\pi(1)}, \cdots, X_{\pi(N)})$ under any permutation $\pi$ on $[N]$. We then establish concentration bounds on the matrix-valued sum $Z = \sum_{k=1}^N W_k X_k \in \R^{p \times r}$.
\end{enumerate}

We present an illustration of these settings in Table~\ref{tab:main-results}. The main theoretical contributions of our paper include Sec.~\ref{sec:master-tensor-bounds} where we investigate concentration inequalities for mode-exchangeable tensors, and Sec.~\ref{sec:master-matrix-valued-bounds} where we turn to exchangeable matrix-valued data. The remaining paper consists of the following. In Sec.~\ref{sec:theoretical-implications}, we provide theoretical implications of our results, recovering~\citeauthor{foygel2024hoeffding}'s result for exchangeable weighted sums and applying to the i.i.d.\ tensor-type and matrix-valued data. In particular, our tools imply tail bounds concerning combinatorial sum of matrix arrays studied in~\citet{chatterjee2007stein, mackey2014matrix}. In Sec.~\ref{sec:applications}, we apply our tools to two practical problems, and make concrete theoretical predictions with numerical simulations providing empirical evidence. We also include a detailed discussion of our contributions in Sec.~\ref{sec:contributions} at the end of the introduction.

\begin{table}[ht]
	\centering
	\begin{tabular}{l|llcc}
		\toprule 
		\rowcolor{gray!10} 
		\textbf{$W$ and $X$} &  \textbf{$\tspace$} & \textbf{$\tspace'$} & \textbf{$\llangle W, X \rrangle \mapsto Z$} & \textbf{\begin{tabular}{@{}c@{}}Exchangeability\\assumption on $X$\end{tabular}}\\
		\midrule
		\textbf{Sequence of scalars  }              & $\R^N$ & $\R^N$ & \includegraphics[width=.15\linewidth]{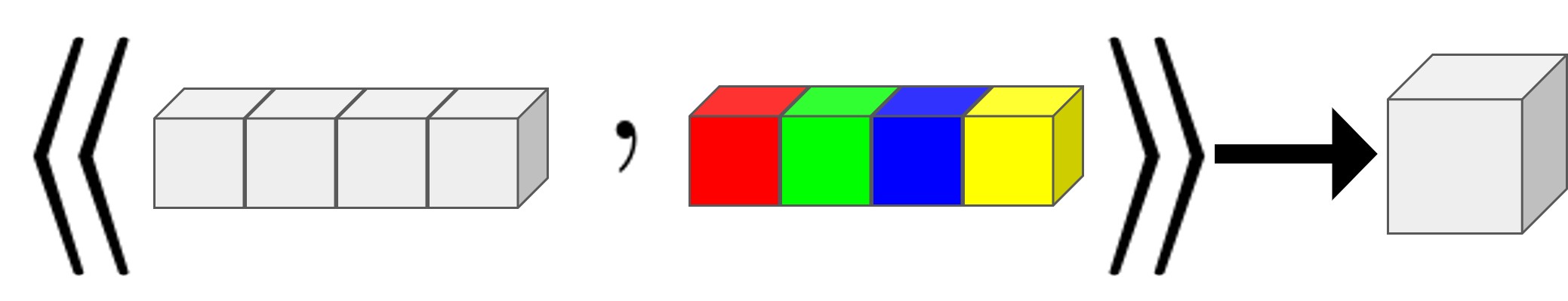}  & \includegraphics[width=.15\linewidth]{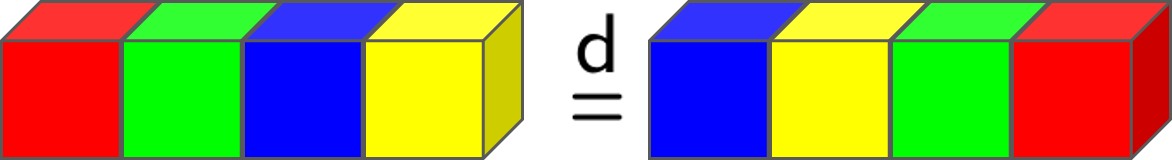} \\
		\midrule \raisebox{3\height}{\textbf{Tensors}}                 & \raisebox{2.3\height}{$\R^{N_1 \times \cdots \times N_K} $} & \raisebox{2.3\height}{$\R^{N_1 \times \cdots \times N_K}$} & \raisebox{1\height}{\includegraphics[width=.15\linewidth]{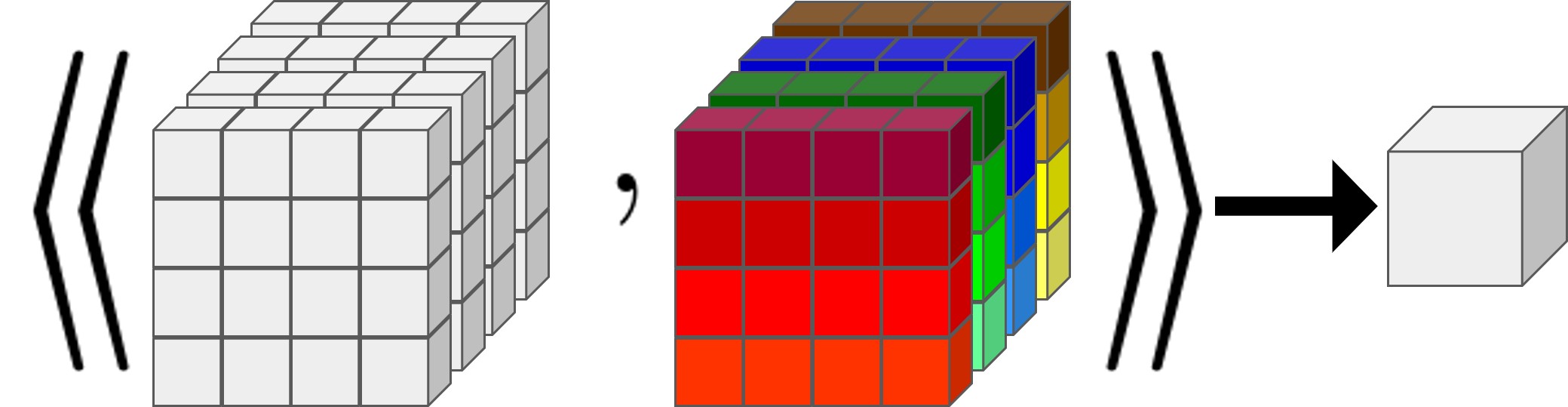}} & \includegraphics[width=.15\linewidth]{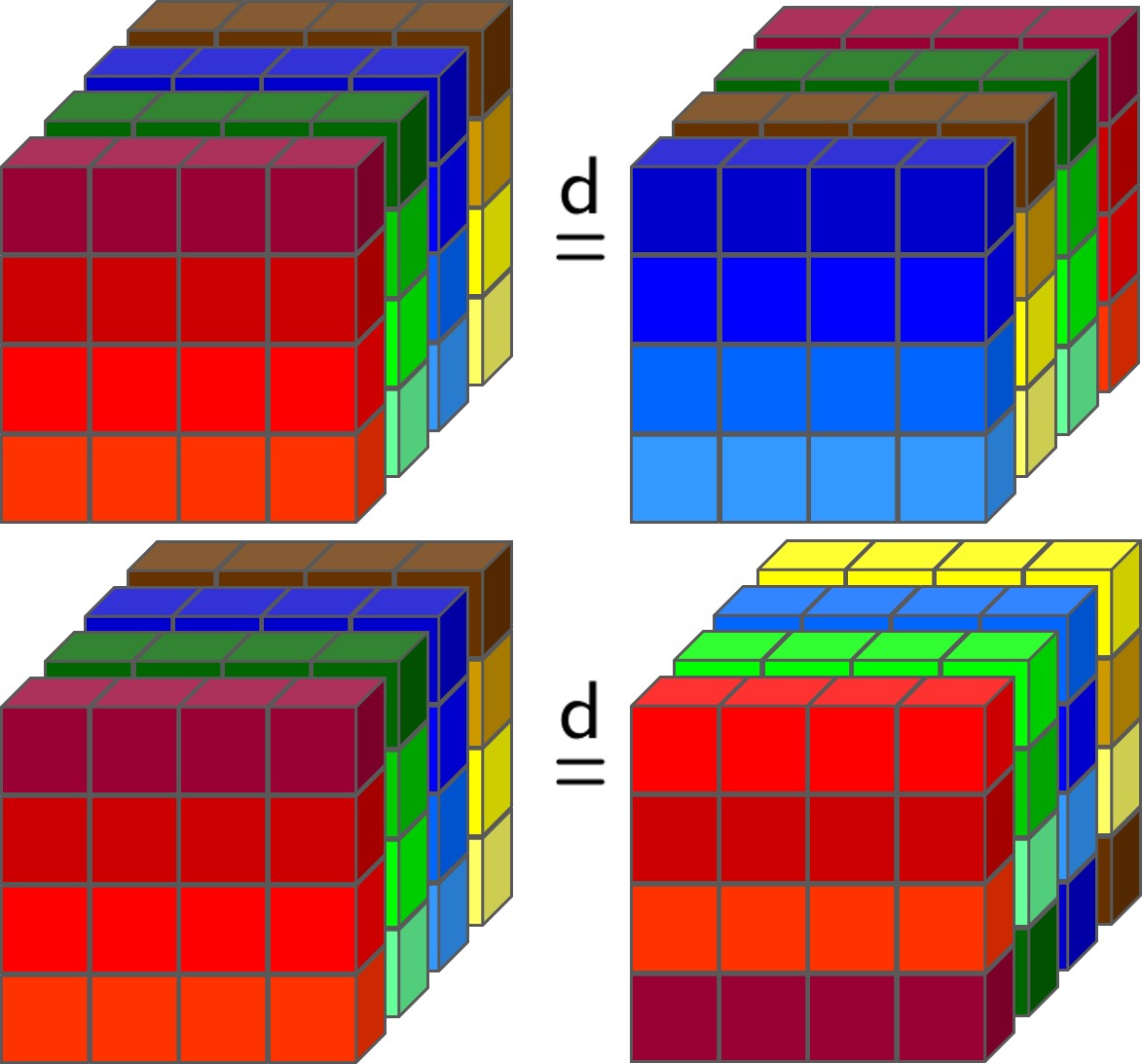} \\
		\midrule \textbf{Matrix-valued data} & $\R^{N \times p \times q}$ & $\R^{N \times q \times r}$ & \includegraphics[width=.15\linewidth]{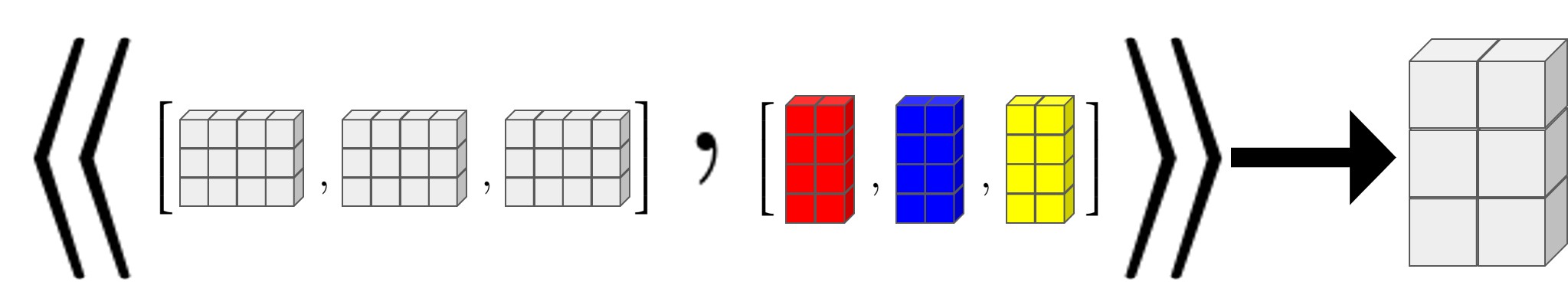} & \includegraphics[width=.15\linewidth]{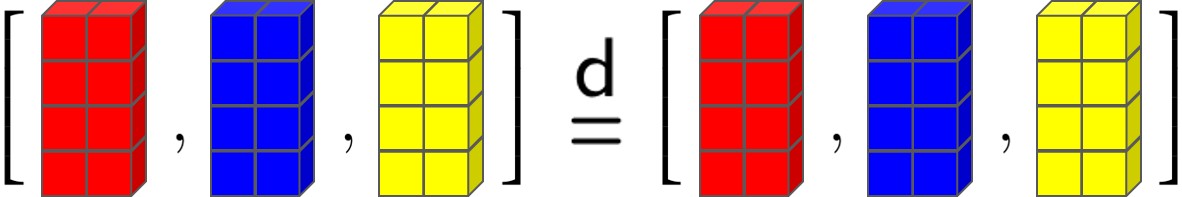} \\
		\bottomrule
	\end{tabular}
	\caption{An illustration of the settings considered in our paper, generalizing weighted sums of exchangeable scalars as studied in \cite{foygel2024hoeffding} (top row) to mode-exchangeable tensors (middle row) and exchangeable sequence of matrix-valued data (bottom row).}  \label{tab:main-results}
\end{table}


\subsection{Examples}
Before formalizing the mathematical framework and introducing the concentration bounds, we first preview two real-world examples when conventional i.i.d.\ or exchangeable assumptions break, while the richer structures of symmetry in randomness allow us to utilize the novel analytical tools. We will return to these examples later on in Section~\ref{sec:applications}.

The following first example concerns tensor-valued data that exhibit exchangeability along each mode.\smallskip

\begin{example}[Average effect of a multi-factor response] \label{example:avg-effect} Consider a multi-factor model, where factor $k \in [K]$ has discrete levels $[N_k]$. The multi-index $(i_1,\cdots, i_K), i_l \in [N_l]$ corresponds to an experimental condition with the $k$-th factor being $i_k$. For each experimental setup, running the test produces a random and possibly censored response $Y_{i_1\cdots i_K} \in \R$. Let $\sum_{i_l \in [N_l], l \in [K]} p_{i_1 \cdots i_K} = 1$ be a probability distribution $P$ on $[N_1] \times \cdots \times [N_K]$, we are interested in estimating the average effect of the multi-factor response
\begin{align*}
	\mu = \E_{(i_1, \cdots, i_K) \sim P}[Y_{i_1\cdots i_K}] = \sum_{i_l \in [N_l], l \in [K]} p_{i_1 \cdots i_K} Y_{i_1 \cdots i_K}.
\end{align*}
In practice, it may be infeasible to observe $Y_{i_1\cdots i_K}$ for all $N_1\cdot\hdots\cdot N_K$ possible combinations of factor levels.
Instead, we can consider the following estimation procedure of the average effect by sub-sampling a Cartesian product of factor-level subsets. Namely, we choose without replacement $I_l \subset [N_l]$ for all $l \in [K]$ with $|I_l| = n_l$, and let $X_{i_1\cdots i_K} = \prod_{l=1}^K \ind_{i_l \in I_l}$ be the indicator random variable of the sampled experiments---evidently, $X_{i_1 \cdots i_K}$ are exchangeable along each fixed factor level (as we shall define formally in the following in Def.~\ref{def:mode-ex}, $X$ is mode-exchangeable). In various real-world problems, this Cartesian product sub-sampling scheme can be attractive, even compared to independent sampling with the same number of samples, as changing factor setups for individual experiment can be costly while batch experiments greatly reduce cost. Several practical domains motivate such cost-reduction schemes, including Lithium-ion batteries aging under multiple factors (temperature, state-of-charge windows, charge/discharge C-rates and depth-of-discharge)~\citep{pelletier2017battery, roman2022design}, high-throughput drug combination screening in cancer (joint effect of drug A, drug B, dose levels and cell lines)~\citep{he2018methods} and semiconductor manufacturing process optimization (factors including gas flows, chamber pressure, RF power and electrode spacing)~\citep{may1991statistical}. 

Suppose we know $p_{i_1\cdots i_K}$ (or we have empirical estimates of the frequencies). Conditioned on the true responses $Y_{i_1 \cdots i_K}$ and let $W_{i_1 \cdots i_K} = p_{i_1 \cdots i_K} Y_{i_1 \cdots i_K}$, we can use the classical Horvitz-Thompson estimator for the average effect:
\begin{align*}
	\what{\mu} = \frac{N_1 \cdots N_K}{n_1 \cdots n_K} \cdot Z = \frac{N_1 \cdots N_K}{n_1 \cdots n_K} \cdot \llangle W, X \rrangle := \frac{N_1 \cdots N_K}{n_1 \cdots n_K} \sum_{i_l \in [N_l], l \in [K]} W_{i_1 \cdots i_K} X_{i_1 \cdots i_K}.
\end{align*}
The question is then to study the concentration of $\what{\mu} - \E [\what{\mu} \mid \{Y_{i_1 \cdots i_K}\}_{i_l \in [N_l], l \in [K]}]$, and in particular how to minimize the estimation error given a fixed sub-sampling budget $n_1 \cdots n_K \leq B$.
\end{example}

In the next example, we turn to sketching techniques widely used as a type of dimensional reduction method in statistical machine learning, where $W$, $X$ and the quantity of interest $\llangle W, X \rrangle$ belong to the matrix-valued venue in Table~\ref{tab:main-results}. In classical setups such as Gaussian sketch and count sketch, the sketching matrices are independent. While still being random, we consider sub-sampling from a pool of fixed-design sketching matrices as an alternative to the existing approaches, reducing the requirement of independence to exchangeability.\bigskip

\begin{example}[Fixed-design sketching matrices] \label{example:sketching}
	Consider a sequence of $N$ exchangeable parameters $\theta_k \in \R^{q \times r}$, and we are interested in the average of parameters $\wb{\theta} = \frac{1}{N} \sum_{k=1}^N \theta_k$. In distributed optimization setup where $\theta_k$ are local parameters, such as federated averaging~\citep{kairouz2021advances, cheng2021fine} and collaborative learning~\citep{cheng2023collaboratively}, computation is intensive in local agents and oftentimes there are constraints in communication cost between the central server and local agents. It is thus desirable to apply sketching techniques to compress local data $\theta_k$: we will take sketching matrices $U_k\in\R^{q'\times q}$, for some dimension $q'\ll q$, which satisfy
    \[\sum_{k=1}^N U_k^\top U_k \approx I_q.\]
    The local data $\theta_k\in\R^{q\times r}$ is then compressed to $z_k = U_k \theta_k \in \R^{q' \times r}$, and the local agents communicate $z_k$ to the central server, which then aggregates the compressed data:
	\begin{align*}
		\what{\theta} = \sum_{k=1}^N U_k^\top z_k=\sum_{k=1}^N U_k^\top U_k\theta_k.
	\end{align*}
	Since $\sum_{k=1}^N U_k^\top U_k \approx I_q$, we can then expect $\what{\theta} \approx \wb{\theta}$. 
    
    Standard approaches utilize independent sketching matrices $U_k$ such as Gaussian and Rademacher random matrices, and sub-sampled random Fourier transform (SRFT) matrices~\citep{halko2011finding}. Other popular methods include count sketch and row-column sub-sampling techniques, which can reduce computational cost since $U_k$ are sparse~\citep{charikar2002finding, mahoney2011randomized}. 
    
    In our setup, given exchangeability in local agents $\theta_1, \cdots, \theta_N$, we may apply a fixed sequence of sketching matrices $U_k$ with the exact equality $\sum_{k=1}^N U_k^\top U_k = I_q$, which is helpful in reducing the approximation error $\sum_{k=1}^N U_k^\top U_k \approx I_q$ that arises in a random sketching setup. Writing $W_k = U_k^\top U_k$ and $X_k = \theta_k$, we then have $\hat\theta=Z = \llangle W, X\rrangle = \sum_{k=1}^N W_k X_k$, and our results then establish concentration guarantees subject to matrix-valued exchangeability in Table~\ref{tab:main-results}.
\end{example}

\subsection{Related work }\label{sec:related-work}

Classical concentration inequalities for sums of independent random variables and their weighted variants, most frequently used as Hoeffding~\citep{hoeffding1963probability} and Bernstein bounds~\citep{bernstein1946theory}, are by now standard~\citep{boucheron2013concentration, wainwright2019high}. For matrix-valued sums with independent summands, the remarkable line of work by \citeauthor{tropp2012user} generalizes sums of random variables to matrix Hoeffding, Bernstein inequalities and other variants~\citep{tropp2012user, tropp2015introduction}, building on tools such as matrix Laplace transforms and operator inequalities. See the technical ideas by~\citeauthor{ahlswede2002strong} in~\cite{ahlswede2002strong} and Lieb's inequality~\citep{lieb1973convex}. We also point out recent work on operator-norm concentration for
random tensors, which typically studies the norm of a random tensor or a sum
of random tensor products under independent or related probabilistic
assumptions; see, for example,
\cite{bandeira2024geometric,al2025sharp}.

Less is known for similar tight bounds beyond independence, even in the case of exchangeable bounded random variables $X_1, \cdots, X_N$ which provides a canonical relaxation of independence while retaining strong symmetry. \citeauthor{de1929funzione}'s celebrated representation theorem~\citep{de1929funzione} suggests general tight Hoeffding and Bernstein alike bounds matching optimal constants in the i.i.d.\ case when $N \to \infty$. For example, in~\cite{ramdas2026randomized}, the authors prove a uniform Hoeffding inequality under infinite exchangeability with the optimal constant. \citeauthor{diaconis1977finite} quantifies distributional approximation of exchangeable sequences when $N$ is finite to a mixture of i.i.d.\ laws in total variation distance~\citep{diaconis1977finite}. See also a recent variant in Kullback–Leibler divergence in~\cite{gavalakis2021information}. In the unweighted case $w_1 = w_2 = \cdots = w_n =1, w_{n+1} = \cdots = w_N = 0$, typically studied as sampling without replacement, Serfling's inequality~\citep{serfling1974probability} provides sharp characterization stronger than the Hoeffding bound. Recent works~\citep{bardenet2015concentration, greene2017exponential} derive refined bounds for this particular setup.

\citeauthor{foygel2024hoeffding}'s recent work examines concentration results in the setting of finite exchangeability, and develops sharp Hoeffding and Bernstein bounds for general weighted sums of exchangeable sequences~\citep{foygel2024hoeffding}. Notably, in the recent work by \citeauthor{kim2026sharper}, the authors reduce the inflation factor from $O(\frac{\log N}{N})$ to the optimal rate $O(\frac{1}{N})$ in the scalar-valued Hoeffding bound. Our paper, directly inspired by \citeauthor{foygel2024hoeffding}'s work, extend the results to (i) mode-exchangeable tensors and (ii) matrix-valued sums formed from exchangeable sequences, which generalizes substantially to richer class of symmetry structures. A line of related route to concentration under dependence in the setup of sum of random matrices is \citeauthor{chatterjee2007stein}'s method of exchangeable pairs~\citep{chatterjee2005concentration, chatterjee2007stein, mackey2014matrix, han2024introduction}.

Sequential data $X_1, \cdots, X_N$ under exchangeability are prevalent in modern statistics, such as in permutation testing and conformal prediction~\citep{ramdas2023permutation, shafer2008tutorial, lei2018distribution, angelopoulos2023conformal}. Beyond the multi-factor model and fixed-design sketching examples, mode-exchangibility and exchangeable sums of random matrices are also useful in practical applications including U-statistics in bipartite networks~\citep{le2023u} and combinatorial nonparametric regression~\citep{han2024introduction}, where our novel tools can be potentially useful.

\subsection{Our contributions
} \label{sec:contributions}

Our starting point is the weighted
scalar concentration theory for finite exchangeable sequences developed in
\cite{foygel2024hoeffding}. We extend this theory in two distinct
directions. The tensor results address dependence induced by separate
permutations of several modes, whereas the matrix results address the
noncommutativity of matrix-valued observations and weights. Our the object of interest is
a fixed weighted linear functional of a tensor whose dependence is specified
through mode exchangeability.

For mode-exchangeable tensors, we establish Hoeffding- and
Bernstein-type concentration inequalities for weighted tensor inner products. Our
contribution is a mode-wise decomposition that successively centers and
controls the tensor along its exchangeable modes. This produces bounds that
make explicit both the exchangeability correction associated with each mode
and the dimensional inflation caused by dependence across the remaining
modes. The
resulting theory therefore identifies the concentration that is possible
under mode exchangeability alone, without entrywise
independence.

For exchangeable matrix-valued data, we establish both a
general Hoeffding bound and sharper spectral Hoeffding- and Bernstein-type
bounds under the commutativity-pair condition introduced in Assumption~\ref{assmp:commutativity}.
Classical matrix concentration inequalities treat independent random
matrices, while matrix exchangeable-pair methods such as
\cite{mackey2014matrix} are designed around a different coupling framework.
Our results instead begin with a finitely exchangeable sequence and exploit
its permutation symmetry directly. The scalar results of
\citeauthor{foygel2024hoeffding}, standard independent matrix concentration,
and combinatorial matrix sums are recovered as special or limiting cases.
Cor.~\ref{cor:matrix-valued-combinatorial} and~\ref{cor:sketching} provide explicit commutativity-pair constructions,
demonstrating that the condition applies beyond ordinarily commuting square
matrices.

The two parts of the paper are connected by a common technical
principle. Exchangeability allows the statistic to be averaged over random
permutations and represented through conditionally centered increments. In
the tensor setting, this argument is iterated one mode at a time, leading to
the modewise Hoeffding and variance terms in Section~\ref{sec:master-tensor-bounds}. In the matrix setting,
the same symmetry must be combined with Hermitian dilation and
trace-exponential arguments.

\paragraph{Notation} For any integer $n \geq 1$, $[n]$ denotes the set $\{1, \cdots, n\}$. For a vector $x \in \R^n$ and $p \geq 1$, we use $\|x\|_p$ for its $\ell_p$ norm and reserve $\|x\|$ for $p=2$. For a square matrix $A \in \R^{n \times n}$, we use $\|A\|$ for its operator norm and $\|A\|_F$ for its Frobenius norm. We use $\mathbb{S}^n$ for the vector space of all symmetric real-valued matrices in $\R^{n \times n}$ and $\mathbb{S}_+^n$ for the convex cone of all symmetric positive semidefinite matrices in $\mathbb{S}^n$. Let $\mathbb{S}_{++}^n$ be the cone of symmetric positive definite matrices. Let $\preceq$ be the partial order induced by the cone $\mathbb{S}_+^n$ and $\prec$ be that induced by $\mathbb{S}_{++}^n$. For a matrix $A \in \mathbb{R}^{n \times m}$ with $m \leq n$, let its pseudo-inverse be $A^\dagger$. We define the projection matrix onto the column space of $A$ by $\proj_A:= A(A^\top A)^\dagger A^\top$ and its orthogonal complement as $\proj_A^\perp := I_n - A(A^\top A)^\dagger A^\top$. In particular, for a unit vector $w \in \R^n$, $\proj_w = ww^\top$ and $\proj_w^\perp = I_n - ww^\top$. 
Let $\mathscr{S}_N$ be the set of all permutation mappings in $[N]$ and  $\permset_N$ be the collection of all permutation matrices in $\R^{N \times N}$. We denote by $\eqdst$ equality in distribution.

Next, we define some additional notation and terminology for working with tensors. For two tensors $A \in \R^{N_1 \times \cdots \times N_K}$ and $B \in \R^{N_1' \times \cdots \times N_{K'}'}$, let their outer product $A \otimes B \in \R^{N_1 \times \cdots \times N_K \times N_1' \times \cdots \times N_{K'}'}$ be
\begin{align*}
	(A \otimes B)_{i_1\cdots i_K i_1' \cdots i_{K'}'} = A_{i_1\cdots i_K} B_{i_1' \cdots i_{K'}'} \qquad \textrm{for all } \,\, i_l \in [N_l], i_{l'}' \in [N_{l'}'], \,\, l \in [K], l' \in [K'].
\end{align*}
We will also frequently use the inner product,
\[\<A, B\>_{\tspace} = \sum_{i_1=1}^{N_1}\hdots\sum_{i_K=1}^{N_K} A_{i_1\cdots i_K}B_{i_1\cdots i_K} \in\R.\]
Recall the $k$-mode tensor-matrix product (cf.~\cite[Sec.~2.5]{kolda2009tensor}) for $A \in \tspace$ and a matrix $B \in \R^{M \times N_k}$, $A \times_k B \in \mathbb{R}^{N_1 \times \cdots \times N_{k-1} \times M \times N_{k+1} \times \cdots \times N_K}$ whose entries are
\begin{align*}
	(A \times_k B)_{i_1\cdots i_{k-1} j i_k \cdots i_K} = \sum_{i_k=1}^{N_k} B_{j i_k} X_{i_1 \cdots i_K}.
\end{align*}
We will write
$
	\wb{A} := \frac{1}{N_1 \cdots N_K} \sum_{i_1=1}^{N_1} \cdots \sum_{i_K=1}^{N_K} A_{i_1 \cdots i_K} = \< A, \proj_{\ones_{N_{1}}} \otimes \cdots \proj_{\ones_{N_{K}}}\>
$
to denote the mean of all entries in a tensor. Analogous to the Frobenius norm for matrices, we can define the norm of a tensor $A$ in the Hilbert space $\tspace$ by $\norm{A}_{\tspace}:= \sqrt{\<A, A\>_{\tspace}}$. We also write $\lone{A} := \sum_{i_l \in [N_l], l \in [K]} |A_{i_1 \cdots i_K}|$ and $\linf{A} := \max_{i_l \in [N_l], l \in [K]} |A_{i_1 \cdots i_K}|$.

\section{Concentration inequalities for mode-exchangeable tensors} \label{sec:master-tensor-bounds}
In this section, we will work with tensor-structured data $X \in \R^{N_1 \times \cdots \times N_K}$, where $X_{i_1\cdots i_K} \in [-1,1]$ for all $i_k \in [N_k], k \in [K]$. Here $K$ is the number of modes of the tensor, with special cases $K=1$ (a vector) and $K=2$ (a matrix). Before defining the problem for the setting of a general $K$, we first single out these two special cases. For the vector case ($K=1$), as studied in \citet{foygel2024hoeffding}, we have
\[Z = \<w, X\>: = w^\top X = \sum_{i=1}^N w_i X_i.\]
In this vector setting, 
exchangeability means the joint distribution of $(X_1, \cdots, X_N)$ is invariant under any permutation of the index, i.e. $(X_1, \cdots, X_N) \eqdst(X_{\pi(1)}, \cdots, X_{\pi(N)})$ for all $\pi \in \permmapset_N$. Next, for the matrix case ($K=2)$, we have
\[ Z = \<W, X\>:= \Tr(W^\top X) = \sum_{i=1}^N \sum_{j=1}^M W_{ij} X_{ij},\] 
where now the exchangeability assumption is row-column-exchangeability: $(X_{ij})_{i \in [N], j \in [M]} \eqdst(X_{\pi(i)\pi'(j)})_{i \in [N], j \in [M]}$ for all $\pi \in \permmapset_N, \pi' \in \permmapset_M$. In each of these two problems, the weight vector $w \in \R^N$ or weight matrix $W \in \R^{N \times M}$ are fixed.

We now move to the general formulation of \textit{mode-exchangeable} tensors, which naturally generalizes the above two scenarios and allows for fine-grained analyses of more complex data structures. In this section, $W$ and $X$ both lie in $\tspace:=\R^{N_1 \times \cdots \times N_K}$. We are interested in the concentration of their inner product in the tensor space,
\[Z =   \< W, X \>_{\tspace} := \sum_{i_1=1}^{N_1} \cdots \sum_{i_K=1}^{N_K} W_{i_1\cdots i_K} X_{i_1 \cdots i_K},
\]
when $W$ is a fixed $K$-mode tensor and while $X$ is a random tensor that satisfies mode exchangeability, as defined next.

\begin{definition}[Mode exchangeability] \label{def:mode-ex}
	For a random tensor $X \in \tspace$, we say it is mode-exchangeable if for any $l \in [K]$ and permutation matrix $\Pi_l \in \permset_{N_l}$,
	\begin{align*}
		X \times_l \Pi_l \eqdst X. 
	\end{align*}	
	That is, the distribution of $X$ is invariant under permutation along any mode. Equivalently, for any set of permutation matrices $\Pi_{k} \in \permset_{N_k}, k =1, 2, \cdots , K$,
	\begin{align*}
		(X \times_1 \Pi_1 \times_2 \cdots \times_K \Pi_K) \eqdst X,
	\end{align*}
    i.e., the distribution $X$ is unchanged when we apply a permutation along each mode.
\end{definition}

In this setting, we are interested in concentration inequalities for $\<W, X \>_{\tspace} - \E[\<W, X \>_{\tspace}]$, where $\E[\<W, X \>_{\tspace}] = N_1\cdots N_k \cdot \wb{W}\cdot \wb{X}$. To obtain one-sided tail bound $\P \{\<W, X \>_{\tspace} - \E[\<W, X \>_{\tspace}]> t \}$ hinges on upper bounding its moment generating function $\E[\exp\{\lambda (\<W, X \>_{\tspace} - \E[\<W, X \>_{\tspace}]) \}]$ for appropriate values of $\lambda$. The remainder of this section develops two such bounds: first a Hoeffding type bound, and then a more refined Bernstein type bound.

Before proceeding, we first need to introduce some additional notation for computing mode-wise averages within a tensor. 
For any tensor $A \in \tspace$, we can define a sequence of tensors $\wb{A}_k \in \mathbb{R}^{N_1 \times \cdots \times N_k} =: \tspace_k$ given by
\[\wb{A}_k := A \times_{k+1} \frac{1}{N_{k+1}}\ones_{N_{k+1}} \times_{k+2} \cdots \times_K \frac{1}{N_K}\ones_{N_K}.\]
In other words, $\wb{A}_k$ is obtained by averaging over modes $k+1,\dots,K$, and has entries
\[(\wb{A}_k)_{i_1\cdots i_k} = \frac{1}{N_{k+1}\cdots N_K}\sum_{i_{k+1}=1}^{N_{k+1}}\hdots\sum_{i_K=1}^{N_K}A_{i_1\cdots i_ki_{k+1}\cdots i_K}.\]
We also define a sequence $A_k \in \tspace$ given by
\[
	A_k  := A \times_{k+1} \proj_{\ones_{N_{k+1}}} \times_{k+2} \cdots \times_K \proj_{\ones_{N_K}}, \]
i.e., we have $(A_k)_{i_1 \cdots i_k i_{k+1} \cdots i_K} = (\wb{A}_k)_{i_1 \cdots i_k}$ for any $i_1,\dots,i_K$. For $k=0$, we additionally have that $\wb{A}_0 = \wb{A} \in \R$ is the average of all entries of $A$, and $A_0 = \wb{A}_0 \cdot \ones_{N_1} \otimes \cdots \otimes \ones_{N_K}$. On the other hand, $A_K = \wb{A}_K = A$.

\subsection{The Hoeffding bound}
As in~\cite{foygel2024hoeffding}, we define a sequence of auxiliary constants $\epsilon_n$ with $\epsilon_1 = 0$, and when $n \geq 2$,
\begin{align*}
	\epsilon_n = \frac{H_n-1}{n - H_n}, \qquad  \qquad H_n := 1 + \frac{1}{2} + \cdots + \frac{1}{n}.
\end{align*}
This constant connects to the advantage term associated with the WoR mean estimator in~\cite{waudby2020confidence}, for which $A_{n} = n/(1+\epsilon_{n+1})$ in the case $t = n$. The following result gives a Hoeffding-type MGF bound for $Z=\<W, X\>_{\tspace}-\E[\<W, X\>_{\tspace}]$.
\begin{theorem}[\textbf{Hoeffding-type MGF bound under mode-exchangeability}] \label{thm:tensor-hoeffding}
	For a fixed $W \in \tspace$ and a mode-exchangeable $X \in \tspace$, it holds for any $\lambda \in \R$ that
	\begin{subequations}
	\begin{align}
		\Ep \brk{\exp \brc{\lambda Z}} \leq \exp \brc{\frac{\lambda^2 }{2} \cdot \prod_{l=1}^K N_l \cdot \sum_{k=1}^{K} \frac{1 + \epsilon_{N_k}}{N_k}  \cdot \sigma^2_{W,k}},\qquad \textnormal{where }\sigma^2_{W, k} := \norm{W_{k} \times_{k} \proj_{\ones_{N_{k}}}^\perp }_{\tspace}^2.
	\end{align}
	Consequently for all $\delta \in (0, 1)$,
	\begin{align}
		\P \brc{Z \geq \sqrt{2 \log \frac{1}{\delta} \cdot \prod_{l=1}^K N_l \cdot \sum_{k=1}^{K} \frac{1 + \epsilon_{N_k}}{N_k} \sigma^2_{W,k} } } \leq \delta.
	\end{align}
	\end{subequations}
\end{theorem}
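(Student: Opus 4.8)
The plan is to reduce the stated bound to a one‑mode combinatorial estimate: first replace $X$ by a uniformly randomly permuted copy of itself, then run a Doob‑martingale argument over the $K$ modes, applying a matrix‑array version of the weighted‑exchangeable Hoeffding bound of \cite{foygel2024hoeffding} at each mode. To set up the reduction, write $W_0 := \wb W\cdot\ones_{N_1}\otimes\cdots\otimes\ones_{N_K}$ for the mode‑constant part of $W$, so that the centering term is exactly $N_1\cdots N_K\,\wb W\,\wb X = \<W_0, X\>_{\tspace}$ and $Z = \<W - W_0, X\>_{\tspace}$. Let $\Pi_1,\dots,\Pi_K$, $\Pi_l\in\permset_{N_l}$, be independent, uniformly random, and independent of $X$. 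By Definition~\ref{def:mode-ex}, $X\times_1\Pi_1\times_2\cdots\times_K\Pi_K \eqdst X$, and since $\<W_0,\cdot\>_{\tspace}$ is invariant under mode permutations, $\E[\exp\{\lambda Z\}] = \E[\exp\{\lambda(\<W, X\times_1\Pi_1\cdots\times_K\Pi_K\>_{\tspace} - \<W_0, X\>_{\tspace})\}]$; so it suffices to bound this MGF.

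Next I would condition on $X$ and introduce the Doob martingale $M_k := \E[\<W, X\times_1\Pi_1\cdots\times_K\Pi_K\>_{\tspace}\mid X,\Pi_1,\dots,\Pi_k]$ for $k=0,\dots,K$. Since $\E\Pi_l = \proj_{\ones_{N_l}}$, averaging out the unrevealed permutations gives $M_k = \<W_k, X\times_1\Pi_1\cdots\times_k\Pi_k\>_{\tspace}$ with $W_k$ as defined in the excerpt; in particular $M_0 = \<W_0, X\>_{\tspace}$, $M_K = \<W, X\times_1\Pi_1\cdots\times_K\Pi_K\>_{\tspace}$, and $M_K-M_0 = \sum_{k=1}^K(M_k-M_{k-1})$. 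Using that $\Pi_k$ is doubly stochastic (so $\Pi_k - \proj_{\ones_{N_k}} = \proj_{\ones_{N_k}}^\perp\Pi_k$) together with self‑adjointness of $\proj_{\ones_{N_k}}^\perp$, a short computation yields
\[ M_k - M_{k-1} = \big\langle\, W_k\times_k\proj_{\ones_{N_k}}^\perp,\ \big(X\times_1\Pi_1\cdots\times_{k-1}\Pi_{k-1}\big)\times_k\Pi_k\,\big\rangle_{\tspace}, \]
i.e. step $k$ peels off the orthogonal ``mode‑$k$ detail'' $W_k\times_k\proj_{\ones_{N_k}}^\perp$ of $W$, whose squared Frobenius norm is precisely $\sigma^2_{W,k}$.

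Conditionally on $(X,\Pi_1,\dots,\Pi_{k-1})$, this increment is a mean‑zero combinatorial sum in the single uniform permutation $\Pi_k$: collapsing the mode‑$k$ fibers, it equals $\sum_{c=1}^{N_k} M^{(k)}_{\pi_k(c),\,c}$, where $M^{(k)}_{ac} := \big\langle (W_k\times_k\proj_{\ones_{N_k}}^\perp)_{[a]},\ (X\times_1\Pi_1\cdots\times_{k-1}\Pi_{k-1})_{[c]}\big\rangle$ is an $N_k\times N_k$ array, $(\,\cdot\,)_{[a]}$ denoting the slice over all modes other than $k$ at mode‑$k$ index $a$. Every column of $M^{(k)}$ sums to zero, because $\proj_{\ones_{N_k}}^\perp$ annihilates the mode‑$k$‑constant direction, and by Cauchy--Schwarz and $\linf{X}\le1$,
\[ \lfro{M^{(k)}}^2 \ \le\ \norm{W_k\times_k\proj_{\ones_{N_k}}^\perp}_{\tspace}^2\cdot\linf{X}^2\cdot\prod_{l=1}^K N_l\ \le\ \sigma^2_{W,k}\cdot\prod_{l=1}^K N_l. \]
The key lemma I would then establish is a one‑mode combinatorial Hoeffding MGF bound: for any $M\in\R^{n\times n}$ with $\sum_a M_{ac}=0$ for all $c$ and uniform $\pi\in\permmapset_n$,
\[ \E\!\left[\exp\!\Big\{\lambda\sum_{c=1}^n M_{\pi(c),c}\Big\}\right]\ \le\ \exp\!\Big\{\frac{\lambda^2}{2}\cdot\frac{1+\epsilon_n}{n}\,\lfro{M}^2\Big\},\qquad \lambda\in\R. \]
This is the matrix‑array generalization of \cite{foygel2024hoeffding} (whose weighted‑sum bound is the rank‑one case $M = w^\perp X^\top$), and I would prove it by the same device --- a recursion/coupling over the size of the permuted block, which is what produces the sharp constant $1+\epsilon_n$ --- now run with a general array, $\lfro{M}^2$ replacing $\ltwo{w^\perp}^2\ltwo{X}^2$. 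This is the step I expect to be the main obstacle: the manipulations above are routine tensor bookkeeping, but obtaining the \emph{optimal} constant $1+\epsilon_n$ (rather than a crude $n/(n-1)$) for an arbitrary array requires the delicate recursive argument of \cite{foygel2024hoeffding}, carried out in matrix form.

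Finally, applying the key lemma conditionally (with $n=N_k$) and inserting the Frobenius bound gives a \emph{deterministic} conditional estimate $\E[\exp\{\lambda(M_k-M_{k-1})\}\mid X,\Pi_1,\dots,\Pi_{k-1}]\le\exp\{\frac{\lambda^2}{2}c_k\}$ with $c_k := \frac{1+\epsilon_{N_k}}{N_k}\prod_{l=1}^K N_l\,\sigma^2_{W,k}$. Peeling $k=K,K-1,\dots,1$ by the tower property yields $\E[\exp\{\lambda(M_K-M_0)\}\mid X]\le\exp\{\frac{\lambda^2}{2}\sum_{k=1}^K c_k\}$; since the right‑hand side is non‑random, taking $\E_X$ gives the claimed MGF bound with $V := \prod_{l=1}^K N_l\sum_{k=1}^K\frac{1+\epsilon_{N_k}}{N_k}\sigma^2_{W,k}$. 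The consequent tail bound then follows from the Chernoff inequality, $\P(Z\ge t)\le\inf_{\lambda>0}\exp\{\frac{\lambda^2}{2}V-\lambda t\}=\exp\{-t^2/(2V)\}$, and solving $\exp\{-t^2/(2V)\}=\delta$ for $t=\sqrt{2V\log(1/\delta)}$.
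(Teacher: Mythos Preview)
Your overall strategy—conditioning on $X$, inserting independent uniform permutations, and running a Doob martingale over the modes—is sound and is in fact a natural dual of the paper's route (which reveals mode-$k$ slices of $X$ against the filtration $\mc{F}_k=\sigma(\wb X_k)$ rather than revealing the permutations). The decomposition $M_k-M_{k-1}=\langle W_k\times_k\proj_{\ones_{N_k}}^\perp,\,Y\times_k\Pi_k\rangle$ and the reduction to a single-permutation combinatorial sum are both correct.

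The genuine gap is your key lemma. The claim
\[
\E\Big[\exp\big\{\lambda\textstyle\sum_{c} M_{\pi(c),c}\big\}\Big]\le\exp\Big\{\tfrac{\lambda^2}{2}\cdot\tfrac{1+\epsilon_n}{n}\,\lfro{M}^2\Big\}
\]
for arbitrary $M$ with zero column sums is \emph{false}. Take $M=we_1^\top$ with $w=(n-1,-1,\dots,-1)^\top$; then $\sum_c M_{\pi(c),c}=w_{\pi(1)}$ is a single uniform draw, $\lfro{M}^2=\|w\|^2=n(n-1)$, and your bound asserts sub-Gaussian parameter $(1+\epsilon_n)(n-1)\approx n$. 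But at $\lambda=1$ the MGF is $\tfrac1n e^{n-1}+\tfrac{n-1}{n}e^{-1}$, which for $n=100$ forces sub-Gaussian parameter at least $\approx 189$, not $\approx 100$. More conceptually, you have misread the rank-one case: the bound in \cite{foygel2024hoeffding} is $(1+\epsilon_n)\|w\|^2$ under $\|X\|_\infty\le1$, not $\tfrac{1+\epsilon_n}{n}\|w\|^2\|X\|^2$; the two coincide only when every $|X_i|=1$.

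What actually holds—and what the paper proves via the $\mathsf{A}_n,\mathsf{B}_n$ martingale plus Jensen averaging over permutations—is a bound that exploits the \emph{bilinear} structure $M^{(k)}_{ac}=\langle A_a,B_c\rangle$ with $\|B_c\|_\infty\le1$: the sub-Gaussian parameter is $(1+\epsilon_{N_k})\sum_a\|A_a\|_1^2$, and only then does Cauchy--Schwarz ($\|A_a\|_1^2\le d\,\|A_a\|_2^2$ with $d=\prod_{l\ne k}N_l$) yield $(1+\epsilon_{N_k})\,d\,\sigma^2_{W,k}=c_k$. Your route reaches the same $c_k$, but by cancelling a spurious $1/N_k$ in the false lemma against a slack $N_k$ in your Frobenius bound $\lfro{M^{(k)}}^2\le\sigma^2_{W,k}\prod_lN_l$; that cancellation is not a proof. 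If you restate the key lemma in this $\ell_1$/bounded-entry form and prove it by the \cite{foygel2024hoeffding} recursion, your argument goes through and becomes essentially equivalent to the paper's.
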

\noindent The proof of this result is given in Appendix~\ref{proof:tensor-hoeffding}.
\subsection{The Bernstein bound}
We define a sequence of variance parameters $\sigma^2_{X, k}$ and $\wt{\sigma}^2_{X,k}$ for the tensor $X$ for $k \in [K]$,
\begin{align*}
	\sigma^2_{X, k} & = \frac{1}{N_k} \norm{\wb{X}_{k} \times_{k} \proj_{\ones_{N_{k}}}^\perp}_{\tspace_{k}}^2 = \frac{1}{\prod_{l=k}^K N_l} \cdot \norm{X_{k} \times_{k} \proj_{\ones_{N_{k}}}^\perp}_{\tspace}^2, \\
	\wt{\sigma}^2_{X, k} & = \sigma^2_{X, k} + \epsilon_{N_{k}} \cdot \prod_{l=0}^{k-1} N_l \cdot \norm{\wb{X}_{k} \times_{k} \proj_{\ones_{N_{k}}}^\perp}_{\infty}^2 = \sigma^2_{X, k} + \epsilon_{N_{k}} \cdot \prod_{l=0}^{k-1} N_l \cdot \norm{X_{k} \times_{k} \proj_{\ones_{N_{k}}}^\perp}_{\infty}^2,
\end{align*}
analogous to the definition of $\sigma^2_{W,k}$ above. The following Bernstein-type bounds using those sample variance parameters should be viewed as empirical-Berstein-type inequalities, which are indeed more favorable than the bounds based on population variance proxies, since it captures the potentially small variations within the realized samples, and the population-type bounds can always be recovered by Jensen's inequality. Define a vector $\wb{w}_k \in \R^{N_k}$ for $k \in [K]$ whose $i$-th coordinate is $\sum_{i_l \in [N_l], l \in [k-1]} \left|(\wb{W}_k \times_k \proj_{\ones_{N_k}}^\perp)_{i_1 \cdots i_{k-1} i}\right|$. We are now ready to present the Bernstein-type MGF bound. The proof is in Appendix~\ref{proof:tensor-bernstein}. 

\begin{theorem}[\textbf{Bernstein-type MGF bound under mode-exchangeability}] \label{thm:tensor-bernstein}
	For a fixed $W \in \tspace$ and a mode-exchangeable $X \in \tspace$, let $\wt{\mc{F}}_X = \sigma(\wt{\sigma}_{X,1}, \cdots, \wt{\sigma}_{X, K})$, then for $|\lambda| \leq \min_{k \in [K]} \frac{3}{2 \linf{\wb{w}_k} (1 + \epsilon_{N_k})}$,
	\begin{subequations}
	\begin{align}
		&\Ep \brk{\exp \brc{\lambda Z} \mid \wt{\mc{F}}_X}  \leq \exp \brc{\frac{\lambda^2}{2} \cdot \sum_{k=1}^K\frac{\prn{1+ \epsilon_{N_{k}}} \cdot \prod_{l=k+1}^K N_l \cdot \sigma^2_{W,k} \cdot \wt{\sigma}_{X,k}^2  }{1 - \frac{2|\lambda|}{3} \linf{\wb{w}_{k}}  (1+ \epsilon_{N_{k}})} } \label{eq:tensor-bernstein-a} 
	\end{align}
	Consequently for all $\delta \in (0, 1)$,
	\begin{align}
		\P \brc{Z \geq \sqrt{2 \log \frac{1}{\delta} \cdot \sum_{k=1}^K \prod_{l=k+1}^K N_l \cdot \prn{1+ \epsilon_{N_{k}}}\sigma^2_{W, k} \wt{\sigma}_{X,k}^2  } + \frac{2 \max_{k \in [K]}  (1+ \epsilon_{N_{k}}) \linf{\wb{w}_{k}} }{3} \cdot \log \frac{1}{\delta} }  \leq \delta. \label{eq:tensor-bernstein-c}
	\end{align}
	\end{subequations}
\end{theorem}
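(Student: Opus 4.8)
The plan is to reduce the $K$-mode statement to the one-mode ($K=1$) case --- the weighted-sum-of-exchangeables Bernstein bound of \citeauthor{foygel2024hoeffding}~\cite{foygel2024hoeffding} --- by peeling off one mode at a time through iterated conditioning on the mode-wise averages $\wb{X}_0, \wb{X}_1, \dots, \wb{X}_K = X$. The first step is a telescoping decomposition: from $X = X_0 + \sum_{k=1}^K (X_k - X_{k-1})$, the identity $X_{k-1} = X_k \times_k \proj_{\ones_{N_k}}$, self-adjointness of the mode-wise projections, and the fact that $X_k$ is constant along modes $k+1,\dots,K$ (so $W$ may be replaced by $W_k$ inside the inner product), one gets
\[ Z = \sum_{k=1}^K Z_k, \qquad Z_k := \< W_k \times_k \proj_{\ones_{N_k}}^\perp,\ X_k \>_{\tspace}, \]
with the mode-$0$ term $\<W, X_0\> - \E\<W, X_0\>$ deterministic under the paper's standing setup, hence vanishing. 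This same computation is what surfaces the factors $\prod_{l=k+1}^K N_l$ and the definitions of $\sigma^2_{W,k}$, $\sigma^2_{X,k}$, $\wt{\sigma}^2_{X,k}$.

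Next I would set up the filtration and peel modes from $k=K$ down to $k=1$. With $\mc H_k := \sigma(\wb{X}_0,\dots,\wb{X}_{k-1})$, the key structural points are: (i) $Z_1,\dots,Z_{k-1}$ are $\mc H_k$-measurable, while $Z_k$ given $\mc H_k$ is a \emph{fixed}-weight linear functional of the $N_k$ mode-$k$ slices of $X_k$, with weights the entries of $W_k \times_k \proj_{\ones_{N_k}}^\perp$; (ii) those slices form an exchangeable sequence given $\mc H_k$, and this exchangeability survives the extra conditioning on $\wt{\mc{F}}_X$ --- since the averaged tensors $\wb{X}_0,\dots,\wb{X}_{k-1}$ and the variance parameters $\wt{\sigma}_{X,j}$ are invariant under permutations of mode $k$ (or, for $\wt{\sigma}_{X,k}$, are symmetric functions of the very slices being permuted), conditioning on the $\wt{\sigma}$'s up front does not damage the symmetry used at any stage; and (iii) because the entries of $X$ lie in $[-1,1]$, Hölder's inequality over the first $k-1$ modes converts the slice-wise inner products into bounded centered scalars whose effective weight vector is exactly $\wb{w}_k$ and whose variance/range proxies are $\sigma^2_{W,k}$ and $\wt{\sigma}^2_{X,k}$. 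Invoking \citeauthor{foygel2024hoeffding}'s one-mode Bernstein MGF bound (or the MGF argument behind it, which extends to bounded vector-valued summands) then yields, for $|\lambda| \le \frac{3}{2 \linf{\wb{w}_k}(1+\epsilon_{N_k})}$,
\[ \E\big[\, e^{\lambda Z_k} \mid \mc H_k,\ \wt{\mc{F}}_X \,\big] \le \exp\Big\{ \frac{\lambda^2}{2}\cdot\frac{(1+\epsilon_{N_k})\,\prod_{l=k+1}^K N_l\,\sigma^2_{W,k}\,\wt{\sigma}^2_{X,k}}{1-\frac{2|\lambda|}{3}\linf{\wb{w}_k}(1+\epsilon_{N_k})}\Big\}. \]

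To conclude, multiply these per-mode bounds. As the right-hand side above depends on $X$ only through $\wt{\sigma}_{X,k}$ and $\mc H_k$, the tower property along $\mc H_K \supseteq \mc H_{K-1} \supseteq \cdots \supseteq \mc H_1$, all under the fixed conditioning on $\wt{\mc{F}}_X$, pulls the factors out one at a time and gives \eqref{eq:tensor-bernstein-a}. The tail bound \eqref{eq:tensor-bernstein-c} follows from \eqref{eq:tensor-bernstein-a} by the standard sub-gamma Chernoff optimization over $\lambda$, carried out conditionally on $\wt{\mc{F}}_X$ (so the threshold is $\wt{\mc{F}}_X$-measurable, which is exactly why the $\wt{\sigma}^2_{X,k}$ are allowed to appear in it), and then taking expectation over $\wt{\mc{F}}_X$ --- the same computation as in the i.i.d.\ Bernstein argument.

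The main obstacle is step two: pinning down, for each $k$, the precise exchangeable-sequence reduction --- which averaged tensor to condition on, why the mode-$k$ slices stay exchangeable once all of $\wt{\mc{F}}_X$ is also conditioned on, and how $[-1,1]$-boundedness and the self-adjoint projections combine (via Hölder over the first $k-1$ modes) so that \citeauthor{foygel2024hoeffding}'s one-mode bound specializes with \emph{exactly} the constants $\sigma^2_{W,k}$, $\wt{\sigma}^2_{X,k}$, $\linf{\wb{w}_k}$ and the factor $\prod_{l=k+1}^K N_l$. The telescoping identity, the commutation of projections with permutations, and the Chernoff step are routine bookkeeping; one small separate point to dispatch is that the grand-mean term really does contribute nothing to $Z$ in the setting at hand.
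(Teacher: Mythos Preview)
Your proposal is correct and follows essentially the same route as the paper: a mode-by-mode telescoping $Z=\sum_k Z_k$ with $Z_k=\<W_k\times_k\proj_{\ones_{N_k}}^\perp,X_k\>_\tspace$, a filtration based on the averaged tensors $\wb{X}_k$ (together with the permutation-invariant variance parameters), and at each level a one-mode Bernstein-type MGF bound for exchangeable tensor-valued slices, followed by the sub-gamma Chernoff step. The paper does not invoke \cite{foygel2024hoeffding} as a black box but instead re-derives the per-mode bound from scratch (their Lemma~\ref{lem:tensor-bernstein-jensen}, via the martingale-over-slices and permutation-averaging machinery with the $\mathsf{A}_n,\mathsf{B}_n$ matrices), which is exactly what you anticipate when you say ``or the MGF argument behind it, which extends to bounded vector-valued summands''; your identification of this extension as the crux is accurate.
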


We can apply further relaxations to Eqs.~\eqref{eq:tensor-bernstein-a} and~\eqref{eq:tensor-bernstein-c} to simplify the bounds, for greater interpretability. Note that 
$\sigma^2_{W,k} =  \norm{W_{k} \times_{k} \proj_{\ones_{N_{k}}}^\perp }_{\tspace}^2 = \norm{W_{k} }_{\tspace}^2 -  \norm{W_{k} \times_{k} \proj_{\ones_{N_{k}}} }_{\tspace}^2 =  \norm{W_{k} }_{\tspace}^2 -  \norm{W_{k-1} }_{\tspace}^2$, which indicates $\sum_{k=1}^K \sigma_{W,k}^2 = \norm{W}_{\tspace}^2 - \norm{W_0}_{\tspace}^2 \leq \norm{W}_{\tspace}^2$. Therefore
	\[
	\sum_{k=1}^K \prod_{l=k+1}^K N_l \cdot \prn{1+ \epsilon_{N_{k}}}\sigma^2_{W, k} \wt{\sigma}_{X,k}^2 
	\leq \norm{W}_{\tspace}^2 \cdot \max_{k \in [K]} \prod_{l=k+1}^K N_l \cdot \prn{1+ \epsilon_{N_{k}}}\wt{\sigma}^2_{X, k}.
	\]
This leads immediately to the following corollary:
\begin{corollary}\label{cor:tensor-bernstein}
    In the setting of Theorem~\ref{thm:tensor-bernstein}, it also holds that
    	\begin{align}
		\P \brc{Z \geq \sqrt{2 \log \frac{1}{\delta} \cdot \norm{W}_{\tspace}^2 \cdot \max_{k \in [K]} \prod_{l=k+1}^K N_l \cdot \prn{1+ \epsilon_{N_{k}}}\wt{\sigma}^2_{X, k}}+ \frac{2 \max_{k \in [K]}  (1+ \epsilon_{N_{k}}) \linf{\wb{w}_{k}} }{3} \cdot \log \frac{1}{\delta} }  \leq \delta. 
	\end{align}
\end{corollary}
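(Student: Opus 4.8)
The plan is to obtain Corollary~\ref{cor:tensor-bernstein} directly from Theorem~\ref{thm:tensor-bernstein} via the relaxation flagged in the discussion immediately preceding the corollary; the only substantive work is to justify the telescoping identity $\sum_{k=1}^K \sigma^2_{W,k} \le \norm{W}_{\tspace}^2$ and then to transfer the tail bound \eqref{eq:tensor-bernstein-c} to a weaker threshold. First I would set up the mode-$k$ Pythagorean decomposition: since $\proj_{\ones_{N_k}}$ and $\proj_{\ones_{N_k}}^\perp$ are complementary orthogonal projections, their images under the mode-$k$ product are orthogonal in $\<\cdot,\cdot\>_{\tspace}$ (this is the matricization identity $\<M A_{(k)}, M^\perp A_{(k)}\>_F = 0$ for the mode-$k$ unfolding $A_{(k)}$), so that $\norm{W_k}_{\tspace}^2 = \norm{W_k \times_k \proj_{\ones_{N_k}}}_{\tspace}^2 + \norm{W_k \times_k \proj_{\ones_{N_k}}^\perp}_{\tspace}^2 = \norm{W_k \times_k \proj_{\ones_{N_k}}}_{\tspace}^2 + \sigma^2_{W,k}$.

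Next I would observe that $W_k \times_k \proj_{\ones_{N_k}} = W_{k-1}$: this follows by unwinding the definition $A_k = A \times_{k+1}\proj_{\ones_{N_{k+1}}}\times_{k+2}\cdots\times_K \proj_{\ones_{N_K}}$ and using commutativity of $\times_i$ and $\times_j$ for $i\neq j$. Hence $\sigma^2_{W,k} = \norm{W_k}_{\tspace}^2 - \norm{W_{k-1}}_{\tspace}^2$, and telescoping over $k\in[K]$ with $W_K = W$ gives $\sum_{k=1}^K \sigma^2_{W,k} = \norm{W}_{\tspace}^2 - \norm{W_0}_{\tspace}^2 \le \norm{W}_{\tspace}^2$. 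Pulling the maximum over $k$ out of the variance proxy in \eqref{eq:tensor-bernstein-c} then yields
\[
\sum_{k=1}^K \prod_{l=k+1}^K N_l \cdot (1+\epsilon_{N_k})\,\sigma^2_{W,k}\,\wt{\sigma}^2_{X,k} \;\le\; \Big(\max_{k\in[K]}\prod_{l=k+1}^K N_l\,(1+\epsilon_{N_k})\,\wt{\sigma}^2_{X,k}\Big)\sum_{k=1}^K \sigma^2_{W,k} \;\le\; \norm{W}_{\tspace}^2 \max_{k\in[K]}\prod_{l=k+1}^K N_l\,(1+\epsilon_{N_k})\,\wt{\sigma}^2_{X,k}.
\]

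Finally I would conclude by monotonicity of the tail: the additive $\log(1/\delta)$ term in Corollary~\ref{cor:tensor-bernstein} coincides with that in \eqref{eq:tensor-bernstein-c}, and by the display above the square-root threshold in the corollary is pointwise at least as large as the one in \eqref{eq:tensor-bernstein-c}; since both thresholds are $\wt{\mc{F}}_X$-measurable and $\{Z \ge T'\}\subseteq\{Z \ge T\}$ whenever $T \le T'$, the corollary's tail probability is bounded by that of \eqref{eq:tensor-bernstein-c}, namely $\delta$. I do not expect a real obstacle here --- this is essentially a one-line consequence of Theorem~\ref{thm:tensor-bernstein} --- and the only two points deserving an explicit sentence are (i) the orthogonality of the mode-$k$ projection split inside the tensor inner product, the tensor analogue of $\norm{v}^2 = \norm{\proj v}^2 + \norm{\proj^\perp v}^2$, and (ii) that enlarging an $\wt{\mc{F}}_X$-measurable random threshold preserves the $\le\delta$ bound coming from the conditional Chernoff argument behind \eqref{eq:tensor-bernstein-c}.
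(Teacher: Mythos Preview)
Your proposal is correct and follows essentially the same route as the paper: the paper establishes the identity $\sigma^2_{W,k}=\norm{W_k}_{\tspace}^2-\norm{W_{k-1}}_{\tspace}^2$ via the same orthogonal-projection split and the observation $W_k\times_k\proj_{\ones_{N_k}}=W_{k-1}$, telescopes to $\sum_{k=1}^K\sigma^2_{W,k}\le\norm{W}_{\tspace}^2$, pulls out the maximum over $k$, and then reads off the corollary from \eqref{eq:tensor-bernstein-c}. Your additional remark about monotonicity with respect to an $\wt{\mc{F}}_X$-measurable threshold is a correct and slightly more explicit justification than the paper provides.
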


Since both the tensor and the mode-exchangeability assumption are invariant under relabeling, the results apply after any deterministic permutation of the $K$ modes. Therefore, one optimize the deterministic upper bound over all orderings and take the minimum to make such bounds independent of the ordering of the modes.

\subsubsection{Comparing exchangeability and independence: the role of dimension}\label{sec:preview_iid}
To help interpret these results, we next consider the role of the dimensions, $N_1,\dots,N_K$, in these bounds. As a baseline, suppose that the entries of $X$ are i.i.d.\ unbiased signs---that is, $X_{i_1\cdots i_K}\sim\textnormal{Unif}(\{\pm 1\})$. In this case, $Z = \<X,W\>$ has mean zero and variance $\norm{W}^2_{\tspace}$, and thus we have $|Z| = O_P(\norm{W}_{\tspace})$. On the other hand, in Corollary~\ref{cor:tensor-bernstein}, if we consider only the leading term then we have $|Z| = O_P(\norm{W}_{\tspace} \cdot \sqrt{\prod_{k=2}^K N_K})$. 

However, we will now see that this is unavoidable, due to the way that exchangeability is substantially weaker than independence as soon as $K>1$. In particular, consider the following mode-exchangeable tensor: let $Y=(Y_1,\dots,Y_{N_1})$ where $Y_1,\dots,Y_{N_1}\sim\textnormal{Unif}(\{\pm 1\})$ are i.i.d., and let $X = Y\otimes \ones_{N_2}\otimes\cdots\otimes \ones_{N_K}$. Therefore $X$ is constructed with $N_1$ independent samples (rather than $\prod_{k=1}^K N_k$). In particular, $Z = \<X,W\>_{\tspace}$ again has mean zero, but now has variance $\|W_1\|_{\tspace}^2\cdot \prod_{k=2}^K N_k$. This explains why the rate of concentration differs by a factor of $\sum_{k=2}^K N_k$ between the exchangeable and i.i.d.\ cases.


\section{Concentration inequalities for matrix-valued data} \label{sec:master-matrix-valued-bounds}

In this section, we consider concentration inequalities for sequential data of matrix-valued entries. Recall that, in this setting, we work with a sequence of weight matrices $W_1, \cdots, W_N \in \R^{p \times q}$, and random matrices $X_1, \cdots, X_N \in \R^{q \times r}$ that are assumed to be exchangeable---i.e., for any $\pi \in \permmapset_N$, it holds that $(X_1, \cdots, X_N) \eqdst (X_{\pi(1)}, \cdots, X_{\pi(N)})$. Using the tensor notation, we denote by $W \in \R^{N \times p \times q}$ and $X \in \R^{N \times q \times r}$ the tensors $(W_1, \cdots, W_N)$ and $(X_1, \cdots, X_N)$. We then consider the bilinear mapping $\llangle \cdot, \cdot \rrangle: \R^{N \times p \times q} \times \R^{N \times q \times r} \to \R^{p \times r}$,
\[ Z = \llangle W, X \rrangle := \sum_{k=1}^{N} W_k X_k.\]
For any $A \in \R^{N \times N}$, this bilinear mapping satisfies the following property (similar to that of $\<\cdot, \cdot\>_{\tspace}$):
\begin{align*}
	\llangle W \times_1 A, X \rrangle = \sum_{k=1}^N  \prn{\sum_{l=1}^N A_{kl} W_l} X_k = \sum_{k=1}^N W_k \prn{\sum_{l=1}^N A_{lk} X_l}  = \llangle W, X \times_1 A^\top \rrangle.
\end{align*} 
Let $\wb{W} = \frac{1}{N} \sum_{k=1}^N W_k$ and $\wb{X} = \frac{1}{N} \sum_{k=1}^N X_k$. Since $Z$ is now matrix-valued, in this setting we can study the concentration of $Z$ via tail bounds on $\norm{\llangle W, X \rrangle -  N\cdot \wb{W} \;\wb{X} }$ measured in operator norm. As the natural generalization of MGF in the scalar case, we will see that can derive Chernoff-type concentration bounds---including Hoeffding and Bernstein bounds---by studying the matrix MGF defined through trace exponentials (see the following Sec.~\ref{sec:Hoeffding-bound-matrix-valued}). 

\subsection{The Hoeffding bound (general case)} \label{sec:Hoeffding-bound-matrix-valued}
To begin with, we highlight a key difference between the matrix-valued case and the scalar-valued case is that the symmetry breaks. Even if $p=r$, we cannot expect $\llangle W, X \rrangle = \llangle X, W \rrangle$ to hold generically unless $p=q=r=1$. This is due to the non-commutativity of matrix multiplication. As we shall see in the following, this asymmetry introduces some challenges for deriving concentration bounds. Nevertheless, we can obtain a generic Hoeffding-type tail bound without any additional assumptions on the commutativity between $W$ and $X$ as follows.

First we recall some notation for working with matrix-valued data. For a square matrix $A \in \R^{n \times n}$, recall the matrix exponential $\exp A : = \sum_{k=0}^\infty \frac{1}{k!} A^k$.  We define the following normalized exponential operator $\wb{\exp} : \R^{n \times m} \to \R$ as
$$
	\wb{\exp}(A) := \frac{1}{n+m} \Tr \brc{\exp \begin{bmatrix}
		0 & A \\
		A^\top & 0
	\end{bmatrix}}.
$$
The enlarged symmetrized matrix in $\mathbb{S}^{n+m}$ on the right hand side is the dilated matrix of $A$. The so-called dilation trick~\citep{paulsen2002completely} serves as a powerful idea in the operator theory. These tools enable the following bound.
\begin{theorem}[\textbf{Generic Hoeffding-type bound for exchangeable matrix-valued data}] \label{thm:matrix-valued-hoeffding-simple}
	For any positive integers $N, p, q, r$, let $W_1, \cdots, W_N \in \R^{p \times q}$ be a fixed sequence of weight matrices and $X_1, \cdots, X_N \in \R^{q \times r}$ be an exchangeable sequence of random data where $\norm{X_k} \leq 1$ for all $k \in [N]$. 
    Then it holds for any $\lambda \in \R$ that
	\begin{subequations}\begin{align}
		\Ep \brk{\wb{\exp }\brc{\lambda \prn{ \llangle W, X \rrangle -  N\cdot \wb{W}\;\wb{X} }}} \leq \exp \brc{8\lambda^2 (1 + \epsilon_N)^2 \cdot \sum_{k=1}^N \norm{W_k}^2 }, \label{eq:tensor-Hoeffding-MGF-1}
	\end{align}and consequently,
for all $\delta \in (0, 1)$,
	\begin{align}
		\P \brc{\norm{\llangle W, X \rrangle -  N\cdot \wb{W}\;\wb{X}  } \geq 4 (1 + \epsilon_N) \sqrt{2 \log \frac{p + r}{\delta} \cdot \sum_{k=1}^N \norm{W_k}^2 }} \leq \delta.
	\end{align}\end{subequations}
\end{theorem}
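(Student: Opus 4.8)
The plan is to reduce the matrix-valued statement to the scalar exchangeable Hoeffding bound via the dilation trick, exploiting the fact that $\overline{\exp}$ is a trace-exponential of a symmetric dilation. First I would set $S := \llangle W,X\rrangle - N\,\overline{W}\,\overline{X}$, write its dilation $\mathcal{D}(S) = \begin{bmatrix} 0 & S \\ S^\top & 0\end{bmatrix}\in\mathbb{S}^{p+r}$, and note that $\mathcal{D}$ is linear, so $\mathcal{D}(S) = \sum_{k=1}^N \mathcal{D}(W_k X_k) - N\,\mathcal{D}(\overline{W}\,\overline{X})$ — i.e. the dilation of the matrix sum is a symmetric-matrix sum of exchangeable summands (after conditioning appropriately). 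The strategy is then to bound $\E[\Tr\exp\{\lambda \mathcal{D}(S)\}]$ by passing through a variational/Golden–Thompson-type step together with the exchangeable structure. Concretely, I would control the matrix MGF by comparing $\lambda \mathcal{D}(S)$ to a scalar quantity: since $\mathcal{D}(W_kX_k)$ has operator norm at most $\|W_k\|\,\|X_k\|\le\|W_k\|$, each summand lies in a bounded interval in the Loewner order, $-\|W_k\| I \preceq \mathcal{D}(W_kX_k)\preceq \|W_k\| I$.

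The key technical move is to handle the exchangeability. The cleanest route is to invoke the scalar result of \cite{foygel2024hoeffding} — as recovered/used earlier in the paper through the constants $\epsilon_N$ — in a "linearized" form: for any fixed unit vector $u\in\R^{p+r}$, the quadratic form $u^\top \mathcal{D}(S) u$ is a weighted sum of the exchangeable objects $\mathcal{D}(W_kX_k)$, and one obtains a sub-Gaussian MGF bound with variance proxy governed by $(1+\epsilon_N)^2\sum_k\|W_k\|^2$. The factor of $8$ and the square on $(1+\epsilon_N)$ should emerge from (i) the dilation doubling dimensions and introducing a factor comparable to $4$ in the norm/variance bookkeeping, and (ii) the exchangeable Hoeffding variance proxy itself carrying one factor of $(1+\epsilon_N)$ which, combined with the centering term $N\,\overline{W}\,\overline{X}$, contributes a second. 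I would then integrate the uniform-in-$u$ bound against the trace — using that $\Tr\exp(A) \le (p+r)\cdot\exp(\lambda_{\max}(A))$ combined with a net argument, or more slickly $\Tr\exp(A)=\E_{g}[\text{something}]$ — to land \eqref{eq:tensor-Hoeffding-MGF-1}.

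The main obstacle I anticipate is the non-commutativity: unlike the scalar case there is no clean product structure, so one cannot directly write $\exp\{\lambda\sum_k \mathcal{D}(W_kX_k)\}$ as a product and peel off summands one at a time. The standard independent-summand remedy (Tropp's master bound via Lieb's concavity / subadditivity of the matrix cumulant) does not apply verbatim because the $X_k$ are only exchangeable, not independent. I expect the resolution is the same device the paper must already use for its scalar exchangeable bounds — a Hoeffding-style coupling/swapping argument over permutations, carried out at the level of the trace-exponential rather than a scalar MGF — using convexity of $A\mapsto\Tr\exp(A)$ and Lieb's theorem to push the expectation over the (random, exchangeable) $X$ inside. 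So the real work is verifying that the Lieb/convexity machinery interacts correctly with the exchangeable coupling, and tracking constants through the dilation to confirm the $8(1+\epsilon_N)^2$ factor; the passage from the MGF bound to the tail bound is then routine, applying the Chernoff bound to $\overline{\exp}$, optimizing over $\lambda$, and using $\overline{\exp}(\lambda S)\ge \frac{1}{p+r}e^{\lambda\|S\|}$ to produce the $\log\frac{p+r}{\delta}$ term.
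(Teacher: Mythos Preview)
Your proposal has the right high-level ingredients (dilation, Lieb/convexity, permutation symmetry) but the core reduction you propose does not work as stated, and it misses the specific mechanism the paper uses.

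The gap is in your ``linearization'' step. Fixing a unit vector $u$ and bounding the scalar MGF of $u^\top \mathcal D(S)u$ does \emph{not} yield a bound on $\E[\Tr\exp(\lambda\mathcal D(S))]$: the trace exponential is not controlled by the supremum or any simple average of scalar MGFs of quadratic forms. A net argument on $u$ would at best give a tail bound on $\lambda_{\max}(\mathcal D(S))$, not the matrix MGF inequality~\eqref{eq:tensor-Hoeffding-MGF-1}, and with uncontrolled constants. Your speculative ``$\Tr\exp(A)=\E_g[\text{something}]$'' route is not made precise and does not obviously close this gap either.

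The paper's proof is structurally different. It builds the filtration $\mc F_0=\sigma(\wb X)$, $\mc F_k=\sigma(\mc F_{k-1},X_k)$ and writes $\llangle W,X\rrangle - N\wb W\,\wb X$ as a telescoping martingale in the increments $X_k - X_{\ge k}$ with $X_{\ge k}=\frac{1}{N+1-k}\sum_{i\ge k}X_i$. At each step it applies (i) the \emph{symmetrization lemma} to replace the centered increment by $2\epsilon\cdot U_k(X_k-X_{\ge k})$ with a fresh Rademacher $\epsilon$, then (ii) \emph{Lieb's inequality} to pull the conditional expectation inside $\Tr\exp$, and (iii) the Rademacher CGF bound $\log\E e^{\epsilon A}\preceq A^2/2$ together with $\|X_k-X_{\ge k}\|\le 2$, yielding the increment $8\lambda^2\|U_k\|^2 I_{p+r}$. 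This is what produces the $8$; it has nothing to do with ``dilation doubling dimensions.'' Only after the full martingale telescoping does exchangeability enter: one substitutes $U=W\times_1\Pi$, averages over all permutations $\Pi$ using convexity of $\Tr\exp$, and the permutation average $\frac{1}{N!}\sum_\Pi \Pi^\top(I-\mathsf B_N)^\top\Pi=\frac{1}{1+\epsilon_N}\proj_{\ones_N}^\perp$ rescales the \emph{linear} term by $\frac{1}{1+\epsilon_N}$ while leaving the quadratic term $8\lambda^2\sum_k\|W_k\|^2$ unchanged. Replacing $\lambda$ by $\lambda(1+\epsilon_N)$ then gives $(1+\epsilon_N)^2$ on the quadratic side --- this, not the centering term, is the source of the square. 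Golden--Thompson finishes the MGF bound, and the tail bound follows from the matrix Chernoff argument (your last paragraph on that step is correct).
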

This result holds in general, without any commutativity type conditions. However, as we will see next, under additional conditions we can derive a much stronger result---in particular, by improving the term $\sum_{k=1}^N \norm{W_k}^2 $.
\subsection{The Hoeffding bound (under commutativity conditions)}

Although commutativity does not hold for general matrix multipilcations, we can improve the tail bound derived above in settings where we may assume additional constraints in the form of Sylvester equations on the weight matrices $W_k$ and the random matrices $X_k$. We introduce the assumption as follows---and in fact, as we will show once having the assumption stated, these constraints can always be fulfilled by vectorization in an appropriately lifted space. 

\begin{assumption}[Commutativity conditions] \label{assmp:commutativity}
	For the given weight $W \in \R^{N \times p \times q}$, we say the exchangeable matrix-valued sequence $X \in \R^{N \times q \times r}$ satisfy the commutativity constraint, if there exist a positive integer $\wt{q}$, an associated weight $V \in \R^{N \times \wt{q} \times r }$ and an injection $\iota: \R^{q \times r} \to \R^{p \times \wt{q} }$ such that the following holds on the support of $X_1$ (which is the same as other $X_k$'s),
	\begin{align*}
		W_k X_1 = \iota(X_1) V_k =: \wt{X}_1 V_k,  \qquad  \qquad \text{for all }k \in [N].
	\end{align*}
	Denoting by $\wt{X} = (\wt{X}_1, \cdots, \wt{X}_N) \in \R^{N \times \wt{q} \times r}$, we say $(\wt{X}, V)$ is the commutativity pair of $(W, X)$.
\end{assumption}

\begin{remark}
	It is apparent that $\wt{X}$ is also an exchangeable sequence of matrices following from the push-forward distribution of $X$ by $\iota$. Therefore, the concentration of the quantity $\llangle W, X \rrangle$ is equivalent to studying $\llangle \wt{X}, V \rrangle$, reconciling the aforementioned asymmetry of left multiplication on the data. As an illustration of the special case when $q=r$ and $X_k = x_k I_q$ for an exchangeable scalar sequence $(x_1, \cdots, x_N)$, we can set $\wt{q} = p$  for any weight $W$ and $\wt{X}_k = x_k I_p, V_k = W_k$.
\end{remark}

\begin{remark} \label{remark:response-iid}
Suppose that $X_1,\cdots,X_N$ is the finite marginal of an infinitely exchangeable sequence. By de Finetti's theorem, its joint distribution is a mixture of i.i.d.\ distributions. The rectangular matrix Bernstein inequality \cite[Theorem~6.1.1]{tropp2015introduction} applies directly. A Bernstein-type tail bound with the standard spectral variance proxy holds without the commutativity conditions in the infinitely extendible setting, cf.~Cor.~\ref{cor:matrix-valued-ex}. The same strategy does not apply to finite exchangeability, and therefore Assumption~\ref{assmp:commutativity} is required to develop the subsequent theoretical results.
\end{remark}

It is important to point out that the existence of a commutativity pair is not the barrier. Since one can always take $\wt{X}_k \in \R^{p \times qr}$ where the rows of $\wt{X}_k$ are all vectorized copies of $X_k$, denoted by $\mathrm{vec}(X_k) \in \R^{pq}$. Using the fact that $w^\top X_k = \mathrm{vec}(X_k)^\top (w \otimes \ones_q) \in \R^{q}$, such $V_k = V_k(W_k)$ always exists. As we shall see in the Hoeffding- and Bernstein-type MGF bounds, it boils down to whether we can find a \textit{economical} pair $(\wt{X}, V)$ such that $\max_{k \in [N]} \norm{\wt{X}_k}, \max_{k \in [N]} \norm{V_k}$ and $\norm{\sum_{k=1}^N V_k^\top V_k}$ are all well-controlled. 

For the above reasons, the commutativity conditions apply much more broadly than simply the case of scalar random sequences in the remark above---we will see several concrete examples in Sections~\ref{sec:theoretical-implications} and~\ref{sec:applications} below.


With this assumption in place, we are now ready to give a more powerful Hoeffding-type bound for the matrix setting.  See its proof and precisely where we make use of the commutativity conditions in Appendix~\ref{proof:matrix-valued-hoeffding}.
\begin{theorem}[\textbf{Hoeffding-type MGF bound for exchangeable matrix-valued data}] \label{thm:matrix-valued-hoeffding}
	For any positive integers $N, p, q, r$, let $W_1, \cdots, W_N \in \R^{p \times q}$ be a fixed sequence of weight matrices and $X_1, \cdots, X_N \in \R^{q \times r}$ be an exchangeable sequence of random data where $\norm{X_k} \leq 1$ for all $k \in [N]$. If Assumption~\ref{assmp:commutativity} holds for the commutativity pair $(\wt{X}, V)$ where $\norm{\wt{X}_k} \leq 1$ for all $k \in [N]$,
    \begin{subequations}
	\begin{align}
		\Ep \brk{\wb{\exp }\brc{\lambda \prn{ \llangle W, X \rrangle -  N\cdot \wb{W}\;\wb{X} }}} & \leq \exp \brc{8 \lambda^2 (1 + \epsilon_N) \cdot \max \brc{\norm{\sum_{k=1}^N W_k W_k^\top}, \norm{\sum_{k=1}^N V_k^\top V_k} }}. \label{eq:tensor-Hoeffding-MGF-2}
	\end{align}
	
	Consequently, for all $\delta \in (0,1)$,
	\begin{align}
		\P \brc{\norm{\llangle W, X \rrangle -  N\cdot \wb{W}\;\wb{X}  } \geq 4  \sqrt{2 \log \frac{p + r}{\delta} \cdot (1 + \epsilon_N) \max \brc{\norm{\sum_{k=1}^N W_k W_k^\top}, \norm{\sum_{k=1}^N V_k^\top V_k} } }} \leq \delta.
	\end{align}
	\end{subequations}
\end{theorem}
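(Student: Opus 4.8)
The plan is to reduce the operator-norm tail to a trace-exponential (matrix MGF) estimate via the Hermitian dilation, and then to establish that estimate by a random-permutation argument paralleling the scalar exchangeable Hoeffding bound of \cite{foygel2024hoeffding} (and the tensor case, Theorem~\ref{thm:tensor-hoeffding}); Assumption~\ref{assmp:commutativity} will enter precisely to control the ``right-hand'' block of the dilation. Write $Z := \llangle W, X \rrangle - N\wb{W}\,\wb{X}$. First reduce to centered weights: since $\llangle\cdot,\cdot\rrangle$ is linear in its first argument with $\sum_k(W_k-\wb W)=\sum_k(V_k-\wb V)=0$, replacing $W_k\mapsto W_k-\wb W$ and $V_k\mapsto V_k-\wb V$ leaves $Z$ unchanged, cancels the centering term, and preserves the commutativity relation (averaging $W_kX_1=\wt X_1V_k$ over $k$ gives $\wb W X_1 = \wt X_1\wb V$); so WLOG $\sum_kW_k=\sum_kV_k=0$ and $Z=\sum_kW_kX_k=\sum_k\wt X_kV_k$. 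Applying the linear dilation $\mathcal H(M)=\begin{bmatrix}0 & M\\ M^\top & 0\end{bmatrix}$ we have $\wb{\exp}(\lambda Z)=\tfrac{1}{p+r}\Tr\exp(\lambda\mathcal H(Z))$, $\mathcal H(Z)=\sum_k\mathcal H(W_kX_k)$, and $\norm{Z}=\lambda_{\max}(\mathcal H(Z))$ (the spectrum of $\mathcal H(Z)$ is symmetric about $0$), so it suffices to bound $\E\,\Tr\exp(\lambda\mathcal H(Z))$.

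For the MGF bound, let $\pi\in\permmapset_N$ be uniform and independent of $X$. By exchangeability $Z\eqdst\sum_kW_{\pi^{-1}(k)}X_k$, and applying the commutativity relation index by index, $\sum_kW_{\pi^{-1}(k)}X_k=\sum_k\wt X_kV_{\pi^{-1}(k)}$; thus $\E\,\Tr\exp(\lambda\mathcal H(Z))=\E_X\E_\pi\,\Tr\exp\big(\lambda\sum_k\mathcal H(W_{\pi^{-1}(k)}X_k)\big)$, and we bound the inner expectation over $\pi$ for a fixed $X$ with $\norm{X_k}\le1$, $\norm{\wt X_k}\le1$. Reveal $\pi^{-1}(1),\dots,\pi^{-1}(N)$ one index at a time; conditionally on the past, $\pi^{-1}(j)$ is uniform over the unused labels. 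Peel off the $j$-th summand $H_j:=\lambda\mathcal H(W_{\pi^{-1}(j)}X_j)$ using the Lieb/Tropp inequality $\E\,\Tr\exp(A+H_j)\le\E\,\Tr\exp(A+\log\E_{j-1}[e^{H_j}])$, where $\E_{j-1}$ conditions on $\pi^{-1}(1),\dots,\pi^{-1}(j-1)$. The second-moment part is handled through $H_j^2$, which is block diagonal with $(1,1)$-block $\lambda^2W_{\pi^{-1}(j)}X_jX_j^\top W_{\pi^{-1}(j)}^\top\preceq\lambda^2W_{\pi^{-1}(j)}W_{\pi^{-1}(j)}^\top$ and $(2,2)$-block $\lambda^2V_{\pi^{-1}(j)}^\top\wt X_j^\top\wt X_jV_{\pi^{-1}(j)}\preceq\lambda^2V_{\pi^{-1}(j)}^\top V_{\pi^{-1}(j)}$, where the identity $W_{\pi^{-1}(j)}X_j=\wt X_jV_{\pi^{-1}(j)}$ is exactly Assumption~\ref{assmp:commutativity}; summing over $j$ (a reindexing since $\pi^{-1}$ is a bijection) bounds $\sum_jH_j^2$ by the block-diagonal matrix with blocks $\lambda^2\sum_kW_kW_k^\top$ and $\lambda^2\sum_kV_k^\top V_k$, of operator norm $\lambda^2\max\{\norm{\sum_kW_kW_k^\top},\norm{\sum_kV_k^\top V_k}\}$. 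The conditional means $\E_{j-1}[H_j]$ do not vanish, but with $\sum_kW_k=0$ the accumulated drift is lower order and produces exactly the without-replacement factor $(1+\epsilon_N)$, as in \cite{foygel2024hoeffding}; iterating the peeling yields \eqref{eq:tensor-Hoeffding-MGF-2}.

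The tail bound in the theorem then follows from Markov's inequality applied to $\lambda_{\max}(\mathcal H(Z))=\norm Z$: for $\lambda>0$, $\P(\norm Z\ge t)\le(p+r)\exp\{-\lambda t+8\lambda^2(1+\epsilon_N)\Sigma\}$ with $\Sigma:=\max\{\norm{\sum_kW_kW_k^\top},\norm{\sum_kV_k^\top V_k}\}$; optimizing $\lambda$ and setting the bound equal to $\delta$ gives $t=4\sqrt{2(1+\epsilon_N)\Sigma\log\frac{p+r}{\delta}}$. The main obstacle is the peeling step: extracting the sharp variance proxy $\Sigma$ (rather than the crude $\sum_k\norm{W_k}^2$ of Theorem~\ref{thm:matrix-valued-hoeffding-simple}) together with a single power of $(1+\epsilon_N)$ requires carefully tracking the conditional-mean drifts and the without-replacement cross-terms inside the non-commutative trace-exponential calculus, and it is the commutativity pair of Assumption~\ref{assmp:commutativity} that makes the $(2,2)$-block estimate---hence the symmetric proxy $\Sigma$---available at all; without it one only recovers the weaker Theorem~\ref{thm:matrix-valued-hoeffding-simple}.
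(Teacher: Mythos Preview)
Your proposal correctly identifies several key ingredients that the paper also uses: the Hermitian dilation, Lieb's inequality for peeling, and---crucially---the role of Assumption~\ref{assmp:commutativity} in controlling the $(2,2)$-block of $H_j^2$ as $V_{\pi^{-1}(j)}^\top V_{\pi^{-1}(j)}$ rather than merely $\norm{W_{\pi^{-1}(j)}}^2 I_r$. The deterministic identity $\sum_j H_j^2 \preceq \lambda^2\,\mathrm{diag}\big(\sum_k W_kW_k^\top,\ \sum_k V_k^\top V_k\big)$ is also correct. However, the step you yourself flag as ``the main obstacle'' is not actually resolved, and the paper's mechanism for it is quite different from what you sketch.

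Your peeling reveals $\pi^{-1}(j)$ sequentially, so the summands $H_j=\lambda\,\mathcal H(W_{\pi^{-1}(j)}X_j)$ have nonzero conditional means; you assert that the accumulated drift ``produces exactly the without-replacement factor $(1+\epsilon_N)$'', but give no mechanism. In fact this factor does \emph{not} arise in \cite{foygel2024hoeffding} or in the paper from tracking drift. The paper instead proceeds as follows. First, it builds a genuine martingale on the \emph{data}: with $\mc F_k=\sigma(\wb X,X_1,\dots,X_k)$ and $X_{\ge k}=\frac{1}{N+1-k}\sum_{i\ge k}X_i$, the increments $U_k(X_k-X_{\ge k})$ are conditionally mean-zero for \emph{any} fixed weights $U_k$. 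A Rademacher symmetrization step (Lemma~\ref{lem:convex-concave-exp}, item 4) together with the Rademacher CGF bound (Lemma~\ref{lem:rademacher-cgf-bound}) and Lieb's inequality then yield, inside the trace exponential, the matrix-valued variance $8\lambda^2\,\mathrm{diag}\big(\sum_k U_kU_k^\top,\ \sum_k Z_k^\top Z_k\big)$, where $(\wt X,Z)$ is the commutativity pair for $(U,X)$. Second---and this is the device you are missing---the paper does \emph{not} simply permute the original weights. It substitutes $U=W\times_1 \mathsf A_N(\mathsf A_N^\top\mathsf A_N)^\dagger\Pi$ (so $Z=V\times_1(\mathsf A_N^\top\mathsf A_N)^\dagger\Pi$) and then averages over $\Pi\in\permset_N$ via Jensen. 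By Lemma~\ref{lem:perm-avg-matrix}, this permutation average of $\Pi^\top(\mathsf A_N^\top\mathsf A_N)^\dagger\Pi$ equals $\frac{1}{1+\epsilon_N}\proj_{\ones_N}^\perp$, so \emph{both} the linear term $\llangle U,X\times_1(I-\mathsf B_N)\rrangle$ and the quadratic term $\llangle U,U^\top\rrangle$ pick up the \emph{same} factor $\frac{1}{1+\epsilon_N}$; after replacing $\lambda\mapsto(1+\epsilon_N)\lambda$, only a single $(1+\epsilon_N)$ survives. With the naive substitution $U=W\times_1\Pi$ (essentially what your sketch does), the variance term $\sum_k\norm{U_k}^2$ is permutation-invariant and picks up no compensating $\frac{1}{1+\epsilon_N}$, which is exactly why the generic Theorem~\ref{thm:matrix-valued-hoeffding-simple} carries $(1+\epsilon_N)^2$ and $\sum_k\norm{W_k}^2$ instead. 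Finally, Golden--Thompson is used to split off the deterministic variance block from the random dilated block.

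In short: your second-moment bookkeeping is right, but the sharp variance proxy $\max\{\norm{\sum_kW_kW_k^\top},\norm{\sum_kV_k^\top V_k}\}$ with a single $(1+\epsilon_N)$ does not fall out of a direct Lieb peeling with nonzero-mean increments. You need either the paper's two-stage argument (mean-zero data martingale for arbitrary weights, then the $\mathsf A_N(\mathsf A_N^\top\mathsf A_N)^\dagger$ substitution plus permutation averaging), or a concrete replacement that achieves the same cancellation; the proposal does not supply one.
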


To compare these bounds to the results of the previous theorem, we can see that the tail bounds established in Theorem~\ref{thm:matrix-valued-hoeffding} rely on $\max\{\norm{\sum_{k=1}^N W_k W_k^\top},\norm{\sum_{k=1}^N V_k^\top V_k}\}$, rather than $\sum_{k=1}^N\norm{W_k}^2$ as in Theorem~\ref{thm:matrix-valued-hoeffding-simple}. This is potentially a much tighter bound: indeed, we always have 
\[\max\left\{\norm{\sum_{k=1}^N W_k W_k^\top},\norm{\sum_{k=1}^N V_k^\top V_k}\right\} \leq \max\brc{\sum_{k=1}^N\norm{W_k}^2, \sum_{k=1}^N \norm{V_k}^2}\]
and the gap in this inequality may be as large as a factor of $O(N)$ (e.g., if each $W_k$ and $V_k$ are rank-1 matrices). However, the commutativity conditions assumed in Theorem~\ref{thm:matrix-valued-hoeffding} can be restrictive, where as Theorem~\ref{thm:matrix-valued-hoeffding-simple} always applies offering greater flexibility.

\subsection{The Bernstein bound (under commutativity conditions)}
We then move on to deriving Bernstein-type bounds for exchangeable matrix-valued data when the commutativity conditions hold. Let
\begin{align*}
	\Sigma = \frac{1}{N} \sum_{k=1}^N (X_k  - \wb{X}) (X_k - \wb{X})^\top, \qquad \wt{\Sigma} = \frac{1}{N} \sum_{k=1}^N (\wt{X}_k - \wb{\wt{X}})^\top (\wt{X}_k - \wb{\wt{X}}),
\end{align*} 
and we can define the variance parameters $\sigma_X^2 = \norm{\Sigma}$ and $\sigma_{\wt{X}}^2 = \norm{\wt{\Sigma}}$, as well as
\begin{align*}
	\wt{\sigma}_X^2 = \sigma_X^2 + \epsilon_N \max_{k \in [N]} \norm{X_k - \wb{X}}^2, \qquad \qquad \wt{\sigma}_{\wt{X}}^2 = \sigma_{\wt{X}}^2 + \epsilon_N \max_{k \in [N]} \norm{{\wt{X}}_k - \wb{{\wt{X}}}}^2.
\end{align*}

\begin{theorem}[\textbf{Bernstein-type MGF bound for exchangeable matrix-valued data}] \label{thm:matrix-valued-bernstein}
	Let Assumption~\ref{assmp:commutativity} hold for $N \geq 2$. If $\norm{X_k} \leq 1$ and $\norm{\wt{X}_k} \leq 1$ for all $k \in [N]$, we have for $w = (\norm{W_1}, \cdots, \norm{W_N})^\top$, $v = (\norm{V_1}, \cdots, \norm{V_N})^\top$ and $|\lambda| \leq \frac{3}{2 \min \{\norm{w}_\infty, \norm{v}_\infty\} (1 + \epsilon_N)}$ that
	\begin{subequations}
	\begin{align}
		&\Ep \brk{\wb{\exp }\brc{\lambda \prn{ \llangle W, X \rrangle -  N\cdot \wb{W}\;\wb{X} }} \mid \wt{\sigma}_X^2, \wt{\sigma}_{\wt{X}}^2} & \nonumber \\
		& \qquad \leq \exp \brc{\frac{\lambda^2 ( 1+ \epsilon_N)^2}{2\prn{1 - \frac{2|\lambda|(1 + \epsilon_N)}{3}  \min \{\norm{w}_\infty, \norm{v}_\infty\} }} \cdot \max \brc{\wt{\sigma}_X^2 \norm{\sum_{k=1}^N W_k W_k^\top}, \wt{\sigma}^2_{\wt{X}} \norm{\sum_{k=1}^N V_k^\top V_k} }}.
	\end{align}
	Consequently, for all $\delta \in (0, 1)$,
	\begin{align}
	& \P \Bigg\{\norm{\llangle W, X \rrangle -  N\cdot \wb{W}\;\wb{X}  } \geq (1 + \epsilon_N) \sqrt{2 \log \frac{p + r}{\delta} \cdot\max \brc{\wt{\sigma}_X^2 \norm{\sum_{k=1}^N W_k W_k^\top}, \wt{\sigma}^2_{\wt{X}} \norm{\sum_{k=1}^N V_k^\top V_k} } } \nonumber \\
	& \qquad + \frac{2 \min \{\norm{w}_\infty, \norm{v}_\infty\} }{3}  (1 + \epsilon_N) \log \frac{p+r}{\delta} \Bigg\} \leq \delta.
\end{align}
	\end{subequations}
\end{theorem}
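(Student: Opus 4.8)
The plan is to reduce to a centered, two-sided sum, dilate to symmetric matrices, and then run the exchangeable moment-generating-function argument of \cite{foygel2024hoeffding} in its variance-aware (Bernstein) form, threaded through the trace exponential by Lieb's concavity inequality. For the reduction: since $N\wb{W}\wb{X}=\frac1N(\sum_kW_k)(\sum_lX_l)$ we have $\llangle W,X\rrangle-N\wb{W}\wb{X}=\sum_kW_k(X_k-\wb{X})$, and under Assumption~\ref{assmp:commutativity} the pointwise identity $W_kx=\iota(x)V_k$ holds for every $x$ in the common support of the $X_i$, so $W_k(X_j-\wb{X})=(\wt{X}_j-\wb{\wt{X}})V_k$ almost surely for all $j,k\in[N]$, giving the dual representation $\llangle W,X\rrangle-N\wb{W}\wb{X}=\sum_kW_k(X_k-\wb{X})=\sum_k(\wt{X}_k-\wb{\wt{X}})V_k$. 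Writing $Y_k=X_k-\wb{X}$ and $\wt{Y}_k=\wt{X}_k-\wb{\wt{X}}$, the pairs $(Y_k,\wt{Y}_k)_{k\in[N]}$ are exchangeable with $\sum_kY_k=\sum_k\wt{Y}_k=0$, and $\wt{\sigma}_X^2,\wt{\sigma}_{\wt{X}}^2$ are functions of the unordered multiset $\{(Y_k,\wt{Y}_k)\}_k$. It thus suffices to bound the matrix MGF conditionally on that multiset; the stated bound follows by the tower property since its right-hand side depends on the data only through $\wt{\sigma}_X^2,\wt{\sigma}_{\wt{X}}^2$, and the tail bound then follows by taking the expectation of a conditional Chernoff inequality. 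Conditionally on the multiset, the ordered sequence is a uniformly random ordering of it.

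Next I would dilate: with $S=\llangle W,X\rrangle-N\wb{W}\wb{X}$ one has $\wb{\exp}\{\lambda S\}=\frac1{p+r}\Tr\exp\{\lambda\mathcal{D}\}$, where $\mathcal{D}=\sum_k\mathcal{D}_k$ and $\mathcal{D}_k=\begin{bmatrix}0&W_kY_k\\(W_kY_k)^\top&0\end{bmatrix}\in\mathbb{S}^{p+r}$. The crux is bounding $\E_\pi[\Tr\exp\{\lambda\mathcal{D}\}]$ over the random ordering $\pi$. Following the recursion behind the exchangeable bounds of \cite{foygel2024hoeffding} and of Theorems~\ref{thm:matrix-valued-hoeffding} and~\ref{thm:tensor-bernstein}, I would reveal the coordinates of $\pi$ one at a time and, at each step, apply Lieb's concavity inequality (that $A\mapsto\Tr\exp(H+\log A)$ is concave on positive-definite $A$) to push the conditional expectation inside the trace exponential, extracting a per-step operator cumulant. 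Summing these cumulants along a random ordering of $N$ items rather than over $N$ independent terms is exactly what inflates the i.i.d.\ constant by the harmonic factor $1+\epsilon_N$; retaining the full exponential series (Bernstein rather than Hoeffding bookkeeping) produces the denominator $1-\frac{2|\lambda|(1+\epsilon_N)}3c$ and the validity range $|\lambda|\le\frac3{2c(1+\epsilon_N)}$, with $c=\min\{\norm{w}_\infty,\norm{v}_\infty\}$ because the two representations let us run the argument with whichever of $\norm{w}_\infty,\norm{v}_\infty$ is smaller.

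The per-step cumulant must be controlled in the Löwner order, separately on the two diagonal blocks of the dilation. The top $p\times p$ block of $\mathcal{D}_k^2$ is $W_kY_kY_k^\top W_k^\top$; summing over the ordering and using $\E_\pi[Y_{\pi(k)}Y_{\pi(k)}^\top]=\Sigma$ for the leading term together with $Y_kY_k^\top\preceq\norm{Y_k}^2I_q$ for the remainder yields a block bound $\preceq\wt{\sigma}_X^2\norm{\sum_kW_kW_k^\top}\,I_p$. The bottom $r\times r$ block of $\mathcal{D}_k^2$ is $Y_k^\top W_k^\top W_kY_k=V_k^\top\wt{Y}_k^\top\wt{Y}_kV_k$ by the dual representation, and the same computation with $\wt{\Sigma}=\frac1N\sum_k\wt{Y}_k^\top\wt{Y}_k$ gives $\preceq\wt{\sigma}_{\wt{X}}^2\norm{\sum_kV_k^\top V_k}\,I_r$. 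Since the trace exponential of a block-diagonal matrix is the sum of those of its blocks, collecting everything yields $\E_\pi[\Tr\exp\{\lambda\mathcal{D}\}]\le(p+r)\exp\{\frac{\lambda^2(1+\epsilon_N)^2}{2(1-\frac{2|\lambda|(1+\epsilon_N)}3c)}\max\{\wt{\sigma}_X^2\norm{\sum_kW_kW_k^\top},\wt{\sigma}_{\wt{X}}^2\norm{\sum_kV_k^\top V_k}\}\}$, which is the claimed MGF bound after dividing by $p+r$; the tail bound follows from the matrix Chernoff step $\norm{S}=\lambda_{\max}(\mathcal{D})$ and optimization over $\lambda\in(0,\frac3{2c(1+\epsilon_N)}]$, producing the two-regime expression with the $\log\frac{p+r}\delta$ factor.

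The main obstacle is the matrix peel-off step. In the scalar exchangeable argument of \cite{foygel2024hoeffding} one factors the MGF across each revealed coordinate using commutativity, whereas here the exponent is matrix-valued, so one must invoke Lieb's concavity at every conditioning step and propagate Löwner-order variance proxies, keeping the two diagonal blocks — hence the representations $(W,X)$ and $(V,\wt{X})$ — separate so as to land on the $\max$ of two operator norms. Pinning down the exact constants (the powers of $1+\epsilon_N$ and the $\frac23$ in the Bernstein denominator) requires the same harmonic-number accounting as in Theorem~\ref{thm:tensor-bernstein}, now carried at the level of trace exponentials.
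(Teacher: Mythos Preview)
Your plan is essentially the paper's own proof: dilate, peel off one coordinate at a time via Lieb's inequality combined with the one-step Bernstein CGF bound, then average over permutations to extract the $(1+\epsilon_N)$ factors and the variance proxy $\wt{\sigma}_X^2$; the two diagonal blocks of the dilation are controlled separately via the $(W,X)$ and $(\wt X,V)$ representations, exactly as you describe, and the final step is Golden--Thompson plus the matrix Chernoff bound.

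One concrete correction worth flagging: the increments you feed to Lieb cannot be $W_kY_k$ with $Y_k=X_k-\wb X$, since $Y_k$ is \emph{not} conditionally mean-zero once $Y_1,\dots,Y_{k-1}$ (and the multiset) are revealed---its conditional mean is $Y_{\geq k}=\frac{1}{N-k+1}\sum_{i\geq k}Y_i$. The paper works with $X_k-X_{\geq k}$ (equivalently $Y_k-Y_{\geq k}$), which \emph{is} a martingale difference, and whose conditional second moment is governed by the tail-averaged $\Sigma_{\geq k}=\frac{1}{N-k+1}\sum_{i\geq k}(X_i-\wb X)(X_i-\wb X)^\top$ rather than $\Sigma$ itself. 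After substituting $U=W\times_1\Pi$, $X\mapsto X\times_1\Pi$ and averaging over $\Pi\in\permset_N$, a direct counting argument (the matrix version of the harmonic-number computation you allude to) gives
\[
\frac{1}{N!}\sum_{\pi}\sum_{k=1}^N W_{\pi(k)}\,\Sigma_{\geq k}^{\pi}\,W_{\pi(k)}^\top \ \preceq\ \frac{1}{1+\epsilon_N}\Bigl(\|\Sigma\|+\epsilon_N\max_l\|X_l-\wb X\|^2\Bigr)\sum_{k}W_kW_k^\top,
\]
and this is where $\wt{\sigma}_X^2$ actually enters; the same averaging on the left-hand side produces the prefactor $\frac{1}{1+\epsilon_N}$ on the centered sum, and the reparametrization $\lambda\to\lambda(1+\epsilon_N)$ then yields the stated exponents. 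With that adjustment to the increments and the per-step variance proxies, your sketch coincides with the paper's argument.
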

We defer its proof to Appendix~\ref{proof:matrix-valued-bernstein}.

\section{Theoretical implications} \label{sec:theoretical-implications}
In this section, we examine several implications of our general results, deriving bounds for a range of different special cases.

\subsection{Weighted sum of exchangeable random variables}
We first apply our results in Sec.~\ref{sec:master-tensor-bounds} and \ref{sec:master-matrix-valued-bounds} where the random terms consist of an exchangeable sequence of random variables $\xi_1, \cdots, \xi_N \in [-1, 1]$. Specifically, we consider the following general weighted sum of exchangeable random variables, where the weights may be scalar-valued or matrix-valued:
\begin{align*}
	&\textbf{\textrm{(Scalar-valued weights)}} & & \<w, \xi\> = \sum_{k=1}^N w_k \xi_k, \\
	& \textbf{\textrm{(Matrix-valued weights)}} & & \llangle W, \xi I_r\rrangle = \sum_{k=1}^N W_k \xi_k.
\end{align*}
To relate these sums to our general notation from before, in the first case we simply have $X_k = \xi_k$, while in the second case we are working with matrix-valued random variables $X_k = \xi_k I_r$.

Let $\wb{\xi} := \frac{1}{N} \sum_{k=1}^N \xi_k$, $\sigma^2_{\xi} := \frac{1}{N} \sum_{k=1}^N (\xi - \wb{\xi})^2$ and $\wt{\sigma}^2_{\xi} := \sigma^2_{\xi} + \epsilon_N \norm{\xi - \wb{\xi}}_\infty^2 \leq \sigma^2_{\xi} + 4 \epsilon_N$. For the scalar-valued weights case, we apply Theorems~\ref{thm:tensor-hoeffding} and \ref{thm:tensor-bernstein} with $K=1$. Note that these bounds exactly recover (with the same constants) the main results in \citet[Thm.~3.1 \& 3.3]{foygel2024hoeffding}.

\begin{corollary}[\textbf{Scalar weighted sum of exchangeable r.v.s}]\label{cor:scalar} For $w=(w_1, \cdots, w_N)^\top \in \R^N$ and all $\delta \in (0,1)$, assume $\wb{w} = \frac{1}{N}\sum_{k=1}^N w_k = 0$. 
\begin{enumerate}
	\item \textbf{(Hoeffding bound)} For any $\lambda \in \R$, it holds that
	\begin{align*}
		\Ep \brk{\exp \brc{\lambda \<w, \xi\>}} \leq \exp \brc{\frac{\lambda^2 (1 + \epsilon_N)}{2} \cdot \norm{w}^2}, 
	\end{align*}
	and consequently $\P \brc{\<w, \xi\> \geq \sqrt{2 (1+\epsilon_N) \log 1/\delta} \cdot \norm{w} } \leq \delta$.
	\item \textbf{(Bernstein bound)} In addition, we have for $|\lambda| \leq \frac{3}{2 \linf{w}(1 + \epsilon_N)}$
	\begin{align*}
		\Ep \brk{\exp \brc{\lambda \<w, \xi\>} \mid \wt{\sigma}^2_{\xi}} \leq \exp \brc{\frac{\lambda^2 (1 + \epsilon_N)}{2 \prn{1 - \frac{2|\lambda|}{3} \linf{w}(1+\epsilon_N)} } \cdot \wt{\sigma}^2_{\xi} \norm{w}^2}, 
	\end{align*}
	and consequently $\P \brc{\<w, \xi\> \geq \sqrt{2 (1+\epsilon_N) \log 1/\delta} \cdot \wt{\sigma}_\xi \norm{w} + \frac{2}{3} \linf{w} (1+\epsilon_N) \log 1/\delta } \leq \delta$.
\end{enumerate}
\end{corollary}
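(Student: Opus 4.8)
The plan is to obtain Corollary~\ref{cor:scalar} directly from Theorems~\ref{thm:tensor-hoeffding} and~\ref{thm:tensor-bernstein} by specializing to $K=1$; no new argument is needed, only a careful translation of the general tensor notation. First I would set $K=1$, $N_1=N$, $\tspace=\R^N$, $W=w$, and $X=\xi$, and observe that $\E[\<w,\xi\>_{\tspace}] = N_1\cdot\wb W\cdot\wb X = N\,\wb w\,\wb\xi = 0$ under the hypothesis $\wb w=0$. Consequently the centered statistic $Z=\<w,\xi\>_{\tspace}-\E[\<w,\xi\>_{\tspace}]$ appearing in both theorems coincides with $\<w,\xi\>=\sum_{k=1}^N w_k\xi_k$, so the bounds from those theorems apply verbatim to $\<w,\xi\>$.

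Second, I would evaluate each tensor-derived quantity in the $K=1$ case. Since $A_K=\wb A_K=A$, both $W_1$ and $\wb W_1$ reduce to $w$ and $\wb X_1$ reduces to $\xi$; the mode-$1$ projection $\proj_{\ones_N}^\perp$ is ordinary centering, so $W_1\times_1\proj_{\ones_N}^\perp = w-\wb w\ones_N = w$ (using $\wb w=0$) and $\wb X_1\times_1\proj_{\ones_N}^\perp = \xi-\wb\xi\ones_N$. Hence $\sigma^2_{W,1}=\norm{w}^2$, $\sigma^2_{X,1}=\tfrac1N\norm{\xi-\wb\xi\ones_N}^2=\sigma^2_{\xi}$, and (with $N_0=1$) $\wt\sigma^2_{X,1}=\sigma^2_{\xi}+\epsilon_N\linf{\xi-\wb\xi\ones_N}^2=\wt\sigma^2_{\xi}$. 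The vector $\wb w_1$ has $i$-th entry equal to $\lvert(\wb W_1\times_1\proj_{\ones_N}^\perp)_i\rvert=\lvert w_i\rvert$ (the inner sum over $[k-1]=[0]$ being vacuous), so $\linf{\wb w_1}=\linf{w}$, and $\wt{\mc F}_X=\sigma(\wt\sigma_{X,1})=\sigma(\wt\sigma_{\xi})$.

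Third, I would substitute into the two theorems. In Theorem~\ref{thm:tensor-hoeffding} the prefactor $\prod_{l=1}^K N_l$ equals $N$ and the sum over $k$ has the single term $\tfrac{1+\epsilon_N}{N}\sigma^2_{W,1}$, so the MGF bound collapses to $\exp\{\tfrac{\lambda^2(1+\epsilon_N)}{2}\norm{w}^2\}$ and the quantile bound to $\P\{\<w,\xi\>\ge\sqrt{2(1+\epsilon_N)\log(1/\delta)}\,\norm{w}\}\le\delta$, which is part~1. In Theorem~\ref{thm:tensor-bernstein} every product $\prod_{l=k+1}^K N_l$ is empty and equals $1$, the admissible range $\lvert\lambda\rvert\le\min_k\tfrac{3}{2\linf{\wb w_k}(1+\epsilon_N)}$ becomes $\lvert\lambda\rvert\le\tfrac{3}{2\linf{w}(1+\epsilon_N)}$, and the single-term sum gives precisely the conditional MGF bound and the tail bound of part~2. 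I would close by noting that these match, constant for constant, \citet[Thm.~3.1 \& 3.3]{foygel2024hoeffding}.

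There is no substantive obstacle; the work is entirely bookkeeping. The two places that warrant a line of care are (i) checking that $\E[\<w,\xi\>]=0$ when $\wb w=0$, so that the MGF statements can be phrased for $\<w,\xi\>$ itself rather than its centering, and (ii) correctly resolving the degenerate index ranges that arise at $K=1$ --- empty products equal to $1$, the vacuous inner sum in the definition of $\wb w_k$, and the $N_0=1$ convention --- so that the general constants collapse to $\norm{w}^2$, $\sigma^2_{\xi}$, $\wt\sigma^2_{\xi}$, and $\linf{w}$ respectively.
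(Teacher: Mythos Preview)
Your proposal is correct and follows essentially the same approach as the paper: the paper's own proof is the one-line remark that the result follows from taking $\tspace=\R^N$, $W=w-\wb w\,\ones_N$ (which equals $w$ under $\wb w=0$), and $X=\xi$ in Theorems~\ref{thm:tensor-hoeffding} and~\ref{thm:tensor-bernstein}. Your careful unpacking of the $K=1$ specializations---$\sigma^2_{W,1}=\norm{w}^2$, $\wt\sigma^2_{X,1}=\wt\sigma^2_\xi$, $\linf{\wb w_1}=\linf{w}$, empty products equal to $1$---is exactly the bookkeeping that underlies that sentence.
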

\begin{proof}
	We take $\tspace = \R^N, W = w - \wb{w} \ones_N$ and $X= \xi$ in Theorems~\ref{thm:tensor-hoeffding} and \ref{thm:tensor-bernstein}.
\end{proof}

Next we apply Theorems~\ref{thm:matrix-valued-hoeffding} and \ref{thm:matrix-valued-bernstein} to a sequence of fixed matrices $W_1, \cdots, W_N$. The result that follows recovers the Hoeffding bound in~\citet[Cor.~7.2]{tropp2012user} and Bernstein bound in~\citet[Thm.~6.1.1]{tropp2015introduction} up to a multiplicative factor $(1+\epsilon_N)$, as \citeauthor{tropp2012user}'s results are on independent random variables. (In the next section, we discuss how to recover transport exchangeable results to i.i.d.\ results, which removes the $(1+\epsilon_N)$ factor.)

The numerical constant for the Hoeffding bound however, comparing to the scalar-valued weights case, is indeed improvable provided $\xi_k \eqdst - \xi_k$. We refer the readers to~\cite[Remark~7.4]{tropp2012user} for further discussions. In addition, in the work of~\citeauthor{mackey2014matrix}, the authors employ the method of exchangeable pairs to establish a Hoeffding-type inequality with the optimal constant, without distributional symmetry given i.i.d.\ data. The numerical constant for the Bernstein bound is indeed optimal.

\begin{corollary}[\textbf{Matrix weighted sum of exchangeable r.v.s}] \label{cor:matrix-valued-ex} For $W_1, \cdots, W_N \in \R^{p \times r}$ and all $\delta \in (0,1)$, assume $\wb{W} = \frac{1}{N}\sum_{k=1}^N W_k = 0$ and let $\sigma^2_W := \max \{\norm{\sum_{k=1}^N W_k W_k^\top}, \norm{\sum_{k=1}^N W_k^\top W_k}\}$.
	\begin{enumerate}
		\item \textbf{(Hoeffding bound)} For any $\lambda \in \R$, it holds that
		\begin{align*}
			\Ep \brk{\wb{\exp} \brc{\lambda \sum_{k=1}^N W_k \xi_k}} \leq \exp \brc{8\lambda^2 (1 + \epsilon_N) \cdot \sigma_W^2}, 
		\end{align*}
		and consequently $\P \brc{\norm{\sum_{k=1}^N W_k \xi_k} \geq 4\sqrt{2 (1+\epsilon_N) \log \frac{p+r}{\delta}} \cdot \sigma_W} \leq \delta$.
		\item \textbf{(Bernstein bound)} Additionally, suppose $\norm{W_k} \leq L$ for all $k \in [N]$. We have for $|\lambda| \leq \frac{3}{2 L(1 + \epsilon_N)}$
		\begin{align*}
			\Ep \brk{\wb{\exp} \brc{\lambda \sum_{k=1}^N W_k \xi_k} \mid \wt{\sigma}^2_{\xi}} \leq \exp \brc{\frac{\lambda^2 (1 + \epsilon_N)}{2 \prn{1 - \frac{2|\lambda|L}{3} (1+\epsilon_N)} } \cdot \wt{\sigma}^2_{\xi} \sigma_W^2}, 
		\end{align*}
		and consequently $\P \brc{\norm{\sum_{k=1}^N W_k \xi_k} \geq \sqrt{2 (1+\epsilon_N) \log \frac{p+r}{\delta}} \cdot  \wt{\sigma}_\xi \sigma_W  + \frac{2L}{3} (1+\epsilon_N) \log \frac{p+r}{\delta} } \leq \delta$.
	\end{enumerate}
\end{corollary}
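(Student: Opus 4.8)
The plan is to derive Corollary~\ref{cor:matrix-valued-ex} directly from Theorems~\ref{thm:matrix-valued-hoeffding} and \ref{thm:matrix-valued-bernstein} by specializing to the case $X_k = \xi_k I_r$. The first thing I would do is verify that Assumption~\ref{assmp:commutativity} holds in this setting, which is exactly the content of the Remark following the assumption: take $q=r$, $\wt q = p$, $\iota(A) := \text{(the obvious embedding matching }W_k \xi_k I_r = \xi_k W_k\text{)}$, so that $\wt X_k = \xi_k I_p$ and $V_k = W_k$. One then checks $W_k X_1 = \xi_1 W_k = \iota(X_1) V_k = (\xi_1 I_p) W_k$ on the support of $X_1$, so $(\wt X, V) = (\xi I_p, W)$ is a valid commutativity pair. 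Since $\norm{X_k} = |\xi_k| \le 1$ and $\norm{\wt X_k} = |\xi_k| \le 1$, the boundedness hypotheses of both theorems are satisfied.

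Next I would substitute into the theorem statements. With the assumption $\wb W = 0$ we have $N \cdot \wb W\, \wb X = 0$, so $\llangle W, X\rrangle - N\wb W\,\wb X = \sum_{k=1}^N W_k \xi_k$, which is the quantity we want. For the Hoeffding bound, plugging $V_k = W_k$ into \eqref{eq:tensor-Hoeffding-MGF-2} gives $\max\{\norm{\sum_k W_k W_k^\top}, \norm{\sum_k W_k^\top W_k}\} = \sigma_W^2$ and reduces the MGF bound to $\exp\{8\lambda^2(1+\epsilon_N)\sigma_W^2\}$, and the tail bound follows by the same Chernoff optimization already carried out in the theorem. For the Bernstein bound, I would observe that since $\wt X_k = \xi_k I_p$, we have $\wt X_k - \wb{\wt X} = (\xi_k - \wb\xi) I_p$, hence $\wt\Sigma = \sigma_\xi^2 I_p$ and $\sigma_{\wt X}^2 = \sigma_\xi^2$; likewise $\Sigma = \sigma_\xi^2 I_r$ and $\sigma_X^2 = \sigma_\xi^2$; and $\max_k \norm{\wt X_k - \wb{\wt X}}^2 = \max_k \norm{X_k - \wb X}^2 = \norm{\xi - \wb\xi}_\infty^2$, so that $\wt\sigma_X^2 = \wt\sigma_{\wt X}^2 = \wt\sigma_\xi^2$. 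Under the additional hypothesis $\norm{W_k} \le L$, we get $\norm{w}_\infty = \norm{v}_\infty = \max_k\norm{W_k} \le L$, so the radius condition $|\lambda| \le \frac{3}{2\min\{\norm{w}_\infty,\norm{v}_\infty\}(1+\epsilon_N)}$ is implied by $|\lambda| \le \frac{3}{2L(1+\epsilon_N)}$, and the $\max\{\cdot,\cdot\}$ in the exponent collapses to $\wt\sigma_\xi^2 \sigma_W^2$. Finally I would note that the conditioning on $(\wt\sigma_X^2, \wt\sigma_{\wt X}^2)$ in the theorem becomes conditioning on $\wt\sigma_\xi^2$ alone, since both equal $\wt\sigma_\xi^2$; substituting these identities into the theorem's tail bound yields the stated inequality.

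There is essentially no hard step here — the corollary is a pure specialization — but the one place requiring care is confirming that the commutativity pair is correctly identified so that the roles of $\sum_k W_k W_k^\top$ and $\sum_k V_k^\top V_k = \sum_k W_k^\top W_k$ are matched up to give $\sigma_W^2$, and that the variance-parameter identities $\sigma_{\wt X}^2 = \sigma_X^2 = \sigma_\xi^2$ hold because scaling the identity matrix commutes with centering. Once those bookkeeping points are checked, the two parts follow immediately by quoting the two theorems; the final remark about $\wt\sigma_\xi^2 \le \sigma_\xi^2 + 4\epsilon_N$ (used only to state the bound in a cleaner form) is the trivial consequence of $\norm{\xi - \wb\xi}_\infty \le 2$.
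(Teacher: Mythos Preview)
Your proposal is correct and takes essentially the same approach as the paper: verify Assumption~\ref{assmp:commutativity} with $X_k=\xi_k I_r$, $\wt X_k=\xi_k I_p$, $V_k=W_k$, then substitute directly into Theorems~\ref{thm:matrix-valued-hoeffding} and~\ref{thm:matrix-valued-bernstein} (with the tail bounds via Lemma~\ref{lem:MGF-tail-matrix}). Your write-up is in fact more detailed than the paper's, spelling out the variance identities $\wt\sigma_X^2=\wt\sigma_{\wt X}^2=\wt\sigma_\xi^2$ and the reduction of the conditioning $\sigma$-algebra, which the paper leaves implicit.
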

\begin{proof}
	We first verify Assumption~\ref{assmp:commutativity} by taking $X_k = \xi_k I_r$, $\wt{X}_k = \xi_k I_p$ and $V_k = W_k$. The MGF inequalities then follow from Theorems~\ref{thm:matrix-valued-hoeffding} and \ref{thm:matrix-valued-bernstein}. The tail bounds are consequences of Lemma~\ref{lem:MGF-tail-matrix}.
\end{proof}

\subsection{Connections to the i.i.d.\ setting}
Next, we explore the connection between our results derived under the assumption of exchangeability, and the more standard setting of i.i.d.\ data. 

To introduce this question, first recall the the scalar setting studied in~\cite{foygel2024hoeffding}, where the aim is to establish concentration of the weighted sum $w_1X_1 + \dots + w_NX_N$ for exchangeable random variables $X_1,\dots,X_N\in\R$. It is shown in~\cite{foygel2024hoeffding} that if $X_1,\dots,X_N$ can be embedded into longer exchangeable sequence $X_1,\dots,X_{\wt{N}}$ (for $\wt{N}\geq N$), then the concentration bounds of Corollary~\ref{cor:scalar} hold with $\epsilon_{\wt{N}}$ in place of $\epsilon_N$. At the extreme, if the $X_i$'s are i.i.d., we can take $\wt{N}$ to be arbitrarily large; since $\epsilon_{\wt{N}}\to0$ as $\wt{N}\to\infty$, this means that the results of Corollary~\ref{cor:scalar} recover the usual Hoeffding and Bernstein bounds for sums of independent random variables. In this section, we will see that the same type of conclusion holds for the tensor and matrix settings considered in this paper.

It is worth pointing out that merely $N \to \infty$ is insufficient to approximate a mixture-of-i.i.d.\ distribution, as de Finneti's theorem only applies to infinitely exchangeable sequence. Indeed, by the finite de Finitte's theorem~\cite[Thm.~1]{diaconis1980finite}, one typically will need the embedded exchangeable sequence $(X_1, \cdots, X_{\wt{N}})$ to be much longer, i.e.\ $\wt{N} \gg N^2$ to ensure small total variation error between $(X_1, \cdots, X_N)$ and the family of mixture-of-i.i.d.\ distributions of length $N$. Otherwise, a general finite exchangeable distribution of length $N$ can have constant error from a mixture-of-i.i.d.\ distribution.

\subsubsection{Comparing to the i.i.d.\ case for mode-exchangeable tensors}
As previewed in Section~\ref{sec:preview_iid}, for the setting of a mode-exchangeable tensor, we can compare to a setting where the data is ``i.i.d.\ along one mode''---that is, we sample i.i.d.\ $(K-1)$-mode tensors.
Specifically, suppose that $X \in [-1,1]^{N_1 \times \cdots \times N_K}$ is i.i.d.\ along the $K$-th mode: that is, the $(K-1)$-mode tensors
\[\mathscr{X}_l:= X \times_K e_l = (X_{i_1\dots i_{K-1}l})_{i_1\in[N_1],\dots,i_{K-1}\in[N_{K-1}]}\in\tspace_{K-1}\]
are i.i.d.\ for each $l=1,\dots,N_K$ following a probability distribution $P$ on $\tspace_{K-1}$, whose real-valued mean and variance parameters are
\begin{align*}
    \mu_P & = \E_{\mathscr{X} \sim P} [\wb{\mathscr{X}}] , \qquad \sigma_P^2  = \E_{\mathscr{X} \sim P} \brk{\norm{\mathscr{X} - \E_{\mathscr{X} \sim P} [\mathscr{X}]}_{\tspace_{K-1}}^2}.
\end{align*}
Similarly, we can define the $N_K$ weight tensors $\mathscr{W}_1, \cdots, \mathscr{W}_{N_K} \in \tspace_{K-1}$. In this case, the relevant inner product can be written as a sum of $N_K$ independent terms as
\begin{align*}
    Z = \langle W, X \rangle_{\tspace_K} = \sum_{l=1}^{N_K} \langle \mathscr{W}_l, \mathscr{X}_l \rangle_{\tspace_{K-1}} .
\end{align*}
Let $w \in \R^{N_K}$ be the vector whose $i$-th coordinate is $\sum_{i_l \in [N_l], l \in[K-1]} |W_{i_1 \cdots i_{K-1} i}| = \lone{\mathscr{W}_i}$. In order to invoke classical results concerning sums of independent random variables, we observe that the $l$-th summand $\langle \mathscr{W}_l, \mathscr{X}_l \rangle_{\tspace_{K-1}}$ satisfies 
$$\left|\langle \mathscr{W}_l, \mathscr{X}_l \rangle_{\tspace_{K-1}}\right| \leq \lone{\mathscr{W}_l} \cdot \linf{\mathscr{X}_l} \leq w_l, \quad \var(\langle \mathscr{W}_l, \mathscr{X}_l \rangle_{\tspace_{K-1}}) = \E \brk{\prn{\langle \mathscr{W}_l, \mathscr{X}_l - \mu_P\rangle_{\tspace_{K-1}}}^2} \leq \norm{\mathscr{W}_l}_{\tspace_{K-1}}^2 \cdot \sigma_P^2.$$
By applying the classical Hoeffding inequality for sums of independent random variables, we obtain for all $\lambda \in \R$,
\begin{align*}
\E[\exp\{\lambda(Z-\E[Z])\}] & \leq \exp \brc{\frac{\lambda^2 }{2} \cdot \sum_{l=1}^{N_K} \lone{\mathscr{W}_l}^2} ,\end{align*}
while  the classical Bernstein bound yields for all $|\lambda| \leq \frac{3}{2 \linf{w}}$ the following MGF bound,
\begin{align*}
\E[\exp\{\lambda(Z-\E[Z])\}] & \leq \exp \brc{ \frac{\lambda^2 }{2 \prn{1 - \frac{2|\lambda|}{3} \linf{w}}} \cdot \sum_{l=1}^{N_K} \norm{\mathscr{W}_l}_{\tspace_{K-1}}^2 \cdot \sigma_P^2}.
\end{align*}
To better relate to our bounds, we apply the Cauchy-Schwarz inequality $\lone{\mathscr{W}_l}^2 \leq \prod_{l=1}^{K-1} N_l \cdot \norm{\mathscr{W}_l}_{\tspace_{K-1}}^2$ (which is an equality when $K=2$) and the identity $\sum_{l=1}^{N_K} \norm{\mathscr{W}_l}_{\tspace_{K-1}}^2 = \norm{W}_{\tspace}^2$ to arrive at the following.
\begin{subequations}
\begin{align}
    \E[\exp\{\lambda(Z-\E[Z])\}] & \leq \exp \brc{\frac{\lambda^2 }{2} \cdot \prod_{l=1}^{K-1} N_l \cdot \norm{W}_{\tspace}^2} & & \text{for all }\lambda \in \R, \label{eqn:hoeffding_iid_tensor} \\
    \E[\exp\{\lambda(Z-\E[Z])\}] & \leq \exp \brc{ \frac{\lambda^2 }{2 \prn{1 - \frac{2|\lambda|}{3} \linf{w}}} \cdot \norm{W}_{\tspace}^2 \sigma_P^2} & & \text{for all }|\lambda| \leq \frac{3}{2\linf{w}}. \label{eqn:bernstein_iid_tensor}
\end{align}
\end{subequations}
\subsubsection{Interpolation between exchangeability and i.i.d.\ bounds} 
Next we will see how our results, under exchangeability, agree with these bounds in the limit. Suppose that $\wt{X}\in\R^{N_1\times\dots\times N_{K-1}\times \wt{N}_K} =: \wt{\tspace}$ is mode-exchangeable for some $\wt{N}_K\geq N_K$, and $X\in\R^{N_1\times\dots\times N_K}$ is the appropriate sub-tensor of $\wt{X}$. We will also need an additional assumption:
\begin{definition}[Balanced weights along mode-$K$]
	We say $W$ is balanced along mode-$K$ if $\wb{W}_{K-1} = \wb{W} \ones_{N_1} \otimes \cdots \otimes \ones_{N_{K-1}}$.
\end{definition}
Then, assuming that $W$ is balanced along mode-$K$, the MGF bound in Theorem~\ref{thm:tensor-hoeffding} admits significant simplification through
$$
    \sigma_{W,k} = \norm{W_k \times_k \proj_{\ones_{N_k}}^\perp}_{\tspace}^2 = \norm{\wb{W}_{K-1} \times_k \proj_{\ones_{N_k}}^\perp}_{\tspace_{K-1}}^2 = 0, \qquad \textnormal{for all }k \in [K-1].
$$
and thus (see details of the reduction from $\wt{\tspace}$ to $\tspace$ in Appendix~\ref{proof:tensor-iid})
$$
\E[\exp\{\lambda(Z-\E[Z])\}]  \leq \exp \brc{\frac{\lambda^2 }{2} \cdot \prod_{l=1}^{K-1} N_l \cdot (1+\epsilon_{\wt{N}_K})\norm{W}_{\tspace}^2}\qquad \textnormal{for all }\lambda \in \R.
$$
In particular, taking $\wt{N}_K\to\infty$ (i.e., if $X$ is i.i.d.\ along the $K$-th mode), we see that this exactly recovers the classical calculation~\eqref{eqn:hoeffding_iid_tensor}, with the correct constants. Similarly, for the Bernstein bound, again assuming that $W$ is balanced along mode-$K$, Theorem~\ref{thm:tensor-bernstein} yields (with details in Appendix~\ref{proof:tensor-iid})
\begin{align*}
     \E[\exp\{\lambda(Z-\E[Z])\}]  \leq \exp \brc{ \frac{\lambda^2 \cdot (1 + \epsilon_{\wt{N}_K}) }{2 \prn{1 - \frac{2|\lambda|}{3} \linf{w} (1 + \epsilon_{\wt{N}_K}}} \cdot \norm{W}_{\tspace}^2 \wt{\sigma}_{\wt{X}, K}^2} & & \text{for all }|\lambda| \leq \frac{3}{2\linf{w}}, 
\end{align*}
which again exactly recovers the classical calculation~\eqref{eqn:bernstein_iid_tensor}, with the correct constants, if we take $\wt{N}_K\to\infty$ and the convergence of sample variance $\wt{\sigma}_{\wt{X}, K}^2 \to \sigma_P^2$ almost surely.

In other words, we can interpret our results as providing a natural interpolation between the exchangeable and i.i.d.\ settings, where if the dimension $\wt{N}_K$ along mode-$K$ is simply equal to $N_K$ (the finite exchangeable setting) we have a weaker concentration bound, while if $\wt{N}_K\to\infty$ (which limits to the setting of data that is ``i.i.d.\ along mode-$K$''), then we recover the standard bounds for i.i.d.\ data. Notably, \citeauthor{ramdas2026randomized} demonstrate that, under the assumption of infinite exchangeability, it is possible to obtain a uniform bound for all $N_K \geq M$, for an arbitrarily prescribed threshold $M$, as established in~\cite{ramdas2026randomized}. This observation suggests the potential for strengthening our results to ensure uniform control within our framework.  
\begin{remark}
    Indeed, if $W$ is balanced along mode-$K$, letting $\mu_P := \E_{\mathscr{X} \sim P}[\wb{\mathscr{X}}] \in [-1, 1]$, $Z-\E Z$ admits the following simplification
    $$Z - \E Z = \sum_{l=1}^{N_K} \<\mathscr{W}_l, \mathscr{X}_l - \E_{\mathscr{X} \sim P}[\mathscr{X}] \>_{\tspace_{K-1}} = \<W, X - \mu_P \ones_{\tspace}\>_{\tspace}.$$
    We defer the details to Appendix~\ref{proof:tensor-iid}.
\end{remark}

\subsubsection{Connections to the i.i.d.\ matrix concentration inequalities}
Similarly, we can generalize Theorems~\ref{thm:matrix-valued-hoeffding} and \ref{thm:matrix-valued-bernstein} by embedding into a longer exchangeable sequence of random matrices $X_1, X_2, \cdots, X_{\wt{N}}$ with or without the commutativity conditions, and approaching i.i.d.\ results by taking $\wt{N} \to \infty$. As the general case provides limited insight with heavy notations, for simplicity, we only sketch the generalization to Cor.~\ref{cor:matrix-valued-ex} in the previous section where we have $W_1, \cdots, W_N \in \R^{p \times r}$ and $X_k = \xi_k I_r$ in what follows. First we recall the i.i.d.\ case where $\xi_1, \cdots, \xi_N \simiid P$ on $[-1, 1]$. For $Z=\llangle W, X \rrangle = \sum_{k=1}^N \xi_k W_k$, \citeauthor{tropp2015introduction}'s well-established i.i.d.\ results~\citep{tropp2012user, tropp2015introduction} imply that for the parameter
\begin{align*}
    \sigma^2_W = \max \brc{\norm{\sum_{k=1}^N W_k W_k^\top}, \norm{\sum_{k=1}^N W_k^\top W_k}}, \qquad L =  \max_{k \in [N]} \norm{W_k},
\end{align*}
we have
\begin{subequations}
\begin{align}
    \E[\wb{\exp}\{\lambda(Z-\E[Z])\}] & \leq \exp \brc{8\lambda^2 \cdot \sigma_W^2} & & \text{for all }\lambda \in \R, \label{eqn:hoeffding_iid_matrix} \\
    \E[\wb{\exp}\{\lambda(Z-\E[Z])\}] & \leq \exp \brc{ \frac{\lambda^2 }{2 \prn{1 - \frac{2|\lambda|L}{3}}} \cdot \sigma_W^2 \var(\xi)} & & \text{for all }|\lambda| \leq \frac{3}{2L}. \label{eqn:bernstein_iid_matrix}
\end{align}
\end{subequations}
Now we see given only exchangeability, but if we can embed $\xi_1, \cdots, \xi_N$ into a longer sequence of length $\wt{N}$, our results in Cor.~\ref{cor:matrix-valued-ex} immediately imply
\begin{align*}
    \E[\wb{\exp}\{\lambda(Z-\E[Z])\}] & \leq \exp \brc{8\lambda^2 (1 + \epsilon_{\wt{N}})\cdot \sigma_W^2} & & \text{for all }\lambda \in \R, \\
    \E[\wb{\exp}\{\lambda(Z-\E[Z])\}] & \leq \exp \brc{ \frac{\lambda^2 (1 + \epsilon_{\wt{N}})}{2 \prn{1 - \frac{2|\lambda|L}{3}(1 + \epsilon_{\wt{N}})}} \cdot \sigma_W^2 \wt{\sigma}_{\xi}^2} & & \text{for all }|\lambda| \leq \frac{3}{2L}. 
\end{align*}
Taking $\wt{N}\to\infty$ and we have $\epsilon_{\wt{N}} \to 0$ and $\wt{\sigma}_\xi^2 \to \var(\xi)$ almost surely. We therefore recover the standard i.i.d.\ matrix concentration inequalities in~\eqref{eqn:hoeffding_iid_matrix} and~\eqref{eqn:bernstein_iid_matrix}, again with the correct constants.

\subsection{Combinatorial sum of matrix arrays} \label{sec:combinatorial-array}
As the final theoretical demonstration of our tools, we revisit the combinatorial sum of matrices~\citep{mackey2014matrix, han2024introduction}. \citeauthor{mackey2014matrix} considered a deterministic array of Hermitian matrices $\{A_{i,j}\}_{i, j \in [N]}$, and studied the concentration of the random matrix $\sum_{k=1}^N A_{k, \pi(k)}$, using the celebrated Chatterjee's method of exchangeable pairs~\citep{chatterjee2007stein} (first appeared in \citeauthor{chatterjee2005concentration}'s PhD thesis~\citep{chatterjee2005concentration}). We generalize to the rectangular case, assuming $A_{i,j} \in \R^{m \times n}$. Overloading the tensor product $\otimes$ in the previous sections by Kronecker product of matrices. For $A = (a_{ij}) \in \R^{m \times n}, B = (b_{ij}) \in \R^{m' \times n'}$, let $A \otimes B \in \R^{mm' \times nn'}$ be
\begin{align*}
	A \otimes B := \begin{bmatrix}
		a_{11} B & \cdots & a_{1n} B \\
		\vdots & \ddots & \vdots \\
		a_{m1} B & \cdots & a_{mn} B
	\end{bmatrix}.
\end{align*}
We make the crucial observation that we can find commutativity pair $(W, X)$ and $(\wt{X}, V)$ satisfying Assumption~\ref{assmp:commutativity}, such that $\sum_{k=1}^N A_{k, \pi(k)} = \sum_{k=1}^N W_k X_k = \sum_{k=1}^N \wt{X}_k V_k$. Concretely, 
\begin{align*}
	& W_k = \begin{bmatrix} A_{k,1} & \cdots & A_{k, N} \end{bmatrix} \in \R^{m \times nN}, & & X_k = e_{\pi(k)} \otimes I_n \in \R^{nN \times n}; \\
	&\wt{X}_k = e_{\pi(k)}^\top \otimes I_m \in \R^{m \times mN}, & & V_k = \begin{bmatrix} A_{k,1} \\ \vdots \\ A_{k, N} \end{bmatrix} \in \R^{mN \times n}.
\end{align*}
$X_1, \cdots, X_N$ are then naturally a sequence of exchangeable matrices. We now present the Bernstein-type concentration inequalities for the combinatorial sum of matrix arrays.
\begin{corollary}[\textbf{Combinatorial matrix Bernstein inequality}] \label{cor:matrix-valued-combinatorial} 
	Suppose the array of matrices $\{A_{i,j}\}_{i, j \in [N]}$ satisfies $\sum_{i, j \in [N]} A_{i,j} = 0$ and $\norm{A_{i,j}} \leq R$. It holds for 
	\begin{align*}
		\sigma^2 = \max \brc{\norm{\sum_{i,j \in [N]} A_{i, j} A_{i, j }^\top}, \norm{\sum_{i,j \in [N]} A_{i, j}^\top A_{i, j }}}
	\end{align*}
	that when $|\lambda| < \frac{3}{2(1+\epsilon_N) R}$,
	\begin{align*}
		&\Ep \brk{\wb{\exp }\brc{\lambda \sum_{k=1}^N A_{k, \pi(k)}} } \leq \exp \brc{\frac{\lambda^2 ( 1+ \epsilon_N)^2}{2\prn{1 - \frac{2|\lambda|(1 + \epsilon_N)  R}{3}}} \cdot \prn{\frac{1}{N} + \epsilon_N \cdot \prn{1 - \frac{1}{N}}} \sigma^2},
	\end{align*}
	and consequently for $\delta \in (0, 1)$,
	\begin{align*}
	\P \brc{\norm{\sum_{k=1}^N A_{k, \pi(k)}} \geq (1+\epsilon_N) \sigma \sqrt{2 \prn{\frac{1}{N} + \epsilon_N \cdot \prn{1 - \frac{1}{N}}} \cdot \log \frac{m+n}{\delta}} + \frac{2 (1+\epsilon_N) R}{3} \log \frac{m+n}{\delta}}  \leq \delta.
	\end{align*}
\end{corollary}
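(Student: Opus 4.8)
The plan is to read off both inequalities of Corollary~\ref{cor:matrix-valued-combinatorial} by specializing Theorem~\ref{thm:matrix-valued-bernstein} to the commutativity pair exhibited just above the statement: $W_k=[A_{k,1}\ \cdots\ A_{k,N}]$, $X_k=e_{\pi(k)}\otimes I_n$, $\wt X_k=e_{\pi(k)}^\top\otimes I_m$, and $V_k$ the block column stacking $A_{k,1},\dots,A_{k,N}$. One may assume $N\geq2$ (for $N=1$ the hypothesis $\sum_{i,j}A_{i,j}=0$ forces $A_{1,1}=0$). First I would check the hypotheses of Theorem~\ref{thm:matrix-valued-bernstein}: $(X_1,\dots,X_N)$ is exchangeable since a uniform permutation remains uniform under relabeling of its arguments; $\wt X$ is the pushforward of $X$ under the injection $e_j\otimes I_n\mapsto e_j^\top\otimes I_m$, hence also exchangeable; $\norm{X_k}=\norm{\wt X_k}=1$; and $W_kX_1=A_{k,\pi(1)}=\wt X_1V_k$ for all $k$, which is exactly Assumption~\ref{assmp:commutativity}. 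I would also note the target is already centered: $N\,\wb W\,\wb X=\Ep[\llangle W,X\rrangle]=\tfrac1N\sum_{i,j}A_{i,j}=0$, so $\llangle W,X\rrangle-N\wb W\wb X=\sum_k A_{k,\pi(k)}$.

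The only substantive computation is to evaluate the variance proxies in Theorem~\ref{thm:matrix-valued-bernstein}, and here the permutation structure makes them collapse to deterministic constants. Using $\sum_k e_{\pi(k)}e_{\pi(k)}^\top=I_N$ and $\sum_k e_{\pi(k)}=\ones_N$, one gets $\wb X=\tfrac1N\ones_N\otimes I_n$ and $\Sigma=\tfrac1N\big(I_N-\tfrac1N\ones_N\ones_N^\top\big)\otimes I_n$, so $\sigma_X^2=\norm{\Sigma}=\tfrac1N$; also $\norm{X_k-\wb X}^2=\norm{e_{\pi(k)}-\tfrac1N\ones_N}^2=1-\tfrac1N$ for every $k$, hence $\wt\sigma_X^2=\tfrac1N+\epsilon_N(1-\tfrac1N)$. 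The mirror-image computation on the $\wt X$ side gives $\sigma_{\wt X}^2=\tfrac1N$ and $\wt\sigma_{\wt X}^2=\tfrac1N+\epsilon_N(1-\tfrac1N)$. Since none of these depend on $\pi$, the conditional bound of Theorem~\ref{thm:matrix-valued-bernstein} is in fact unconditional here.

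Next I would identify the remaining quantities: the Gram-type terms telescope to $\sum_k W_kW_k^\top=\sum_k\sum_j A_{k,j}A_{k,j}^\top=\sum_{i,j}A_{i,j}A_{i,j}^\top$ and $\sum_k V_k^\top V_k=\sum_{i,j}A_{i,j}^\top A_{i,j}$, so the maximum of their operator norms is precisely the $\sigma^2$ of the corollary; and each summand obeys $\norm{W_kX_k}=\norm{A_{k,\pi(k)}}\leq R$, so the large-deviation scale entering Theorem~\ref{thm:matrix-valued-bernstein} may be taken to be $R$. Substituting $\wt\sigma_X^2=\wt\sigma_{\wt X}^2=\tfrac1N+\epsilon_N(1-\tfrac1N)$, the common value $\sigma^2$ for the Gram terms, and $R$ for the summand scale then produces exactly the stated MGF bound for $|\lambda|<\tfrac{3}{2(1+\epsilon_N)R}$. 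Finally, the tail bound follows by feeding this trace-exponential MGF estimate into the matrix MGF-to-tail lemma (Lemma~\ref{lem:MGF-tail-matrix}), which contributes the dimensional prefactor $m+n$ from the $(m+n)\times(m+n)$ dilation, and then optimizing over $\lambda$ in the usual Bernstein way.

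I expect the main difficulty to be organizational rather than conceptual: one has to carry the four objects $W,X,\wt X,V$ and their Kronecker/permutation algebra through in parallel, and in particular be careful (a) that the covariance proxy genuinely reduces to a deterministic scalar multiple of a projection, which is exactly where the identities $\sum_k e_{\pi(k)}e_{\pi(k)}^\top=I_N$ and $\sum_k e_{\pi(k)}=\ones_N$ are used, and (b) that the large-deviation term is taken to be the uniform summand bound $R=\max_{i,j}\norm{A_{i,j}}$ rather than the (potentially $\sqrt N$ times larger) block-row and block-column norms $\norm{W_k},\norm{V_k}$ — this being the one spot where the specific combinatorial structure, rather than the generic matrix-valued Bernstein machinery, is what yields the sharp constant.
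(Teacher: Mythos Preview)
Your approach matches the paper's essentially line for line: the same commutativity pair, the same Kronecker computations yielding $\Sigma=\tfrac1N\proj_{\ones_N}^\perp\otimes I_n$, $\wt\Sigma=\tfrac1N\proj_{\ones_N}^\perp\otimes I_m$, $\max_k\norm{X_k-\wb X}^2=1-\tfrac1N$, and the same identification of $\sum_k W_kW_k^\top$ and $\sum_k V_k^\top V_k$ with the two Gram sums defining $\sigma^2$.

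The one place to tighten is exactly the point you flag in (b). Theorem~\ref{thm:matrix-valued-bernstein} as stated uses $w=(\norm{W_1},\dots,\norm{W_N})$ and $v=(\norm{V_1},\dots,\norm{V_N})$, and for the block row $W_k=[A_{k,1}\cdots A_{k,N}]$ one only has $\norm{W_k}\le \sqrt N\,R$ in general, so plugging into Theorem~\ref{thm:matrix-valued-bernstein} verbatim would give $\sqrt N\,R$ in the denominator and the linear term, not $R$. The paper handles this by invoking the sharper variant Theorem~\ref{thm:matrix-valued-bernstein}\textquotesingle\ (stated and proved alongside the Bernstein proof), in which $w_i=\operatorname*{ess\,sup}_X\max_k\norm{W_iX_k}$ and $v_i=\operatorname*{ess\,sup}_{\wt X}\max_k\norm{\wt X_kV_i}$; here $\norm{W_iX_k}=\norm{A_{i,\pi(k)}}\le R$ and likewise for $v$, which is what delivers the $R$ you want. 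So your plan is correct, but you should cite the primed theorem rather than Theorem~\ref{thm:matrix-valued-bernstein} itself at that step.
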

We defer the proof to Appendix~\ref{proof:matrix-valued-combinatorial}. This result recovers~\citet[Cor.~10.3]{mackey2014matrix} up to an additional $O(\sqrt{\log N})$ factor in its variance term, and with a better constant in the bias term. The combinatorial matrix Bernstein inequality also yields a byproduct of a weaker Berstein-type tail bound immediately that requires no commutativity conditions. See discussions for such results in Appendix~\ref{proof:matrix-valued-combinatorial}.

\section{Applications} \label{sec:applications}

We next return to the two example applications introduced in Section~\ref{sec:intro}, to explore the implications of our results in these concrete settings. Section~\ref{sec:applications_example1} revisits Example~\ref{example:avg-effect}, while Sections~\ref{sec:dst-sketching} and~\ref{sec:rtfa} develop the problems introduced in Example~\ref{example:sketching}. See codes for experiments in this section here\footnote{\hyperlink{blue}{https://github.com/Moriartycc/exchangeable-tensors-matrix-valued-data}}. 

\subsection{Experimental design for estimating multi-factor average effect}\label{sec:applications_example1}
We revisit Example~\ref{example:avg-effect}, where our goal is to estimate a weighted mean
$$\mu = \sum_{i_l \in [N_l], l \in [K]} p_{i_1 \cdots i_K} Y_{i_1 \cdots i_K}$$
while only subsampling a block of the tensor $Y$: we will only observe $Y_{i_1\cdots i_K}$ for indices $(i_1,\dots,i_K)\in I_1\times\dots\times I_K$. We will use the Horvitz-Thompson estimator $\hat{\mu}$ as in Example~\ref{example:avg-effect}. By construction, $\what{\mu}$ has mean $\mu$, and variance
$$\sigma^2 = \prod_{l=1}^K N_l \cdot \sum_{i_l \in [N_l], l \in [K]} p_{i_1 \cdots i_K}^2 Y_{i_1 \cdots i_K}^2 - \mu^2.
$$
\paragraph{Tail bounds for the estimation error} The next estimation error bound then follows from Theorem~\ref{thm:tensor-bernstein}. See its proof in Appendix~\ref{proof:avg-effect}.
%
\begin{corollary} \label{cor:avg-effect}
	For $k \in [K]$, let $B_k > 0$ be that for all $i_k \in [N_k]$, $\sum_{i_l \in [N_l], l \neq k} p_{i_1 \cdots i_K} |Y_{i_1 \cdots i_K}| \leq B_k$, and $\alpha_k \in (0,1]$ be the sub-sampling rates such that $\prod_{l=1}^k n_l = \alpha_k \prod_{l=1}^k N_l$. For all $\delta \in (0,1)$, 
	\begin{subequations}
	\begin{align}
		& \P \Bigg\{\what{\mu} - \mu \geq  \sigma \sqrt{2 \log \frac{1}{\delta} \cdot \max_{k \in [K]} \frac{1+ \epsilon_{N_{k}}}{N_k \alpha_k} \cdot \prn{1 - \frac{n_k}{N_k}+ \frac{\epsilon_{N_k}}{\alpha_k} \cdot \ind_{n_k < N_k}}}   +  \frac{2 \max_{k \in [K]} B_k(1 + \epsilon_{N_k})}{3 \alpha_K} \log \frac{1}{\delta} \Bigg\}  \leq \delta. \label{eq:tail-bound-avg-effect-general}
	\end{align}

	For an $\alpha \in (0,1)$ such that $N_K\alpha \geq 1$ is an integer, we have the following tail bound under the Cartesian sub-sampling configuration $(n_1, \cdots, n_K) = (N_1, \cdots, N_{K-1}, N_K \alpha )$:
	\begin{align} \label{eq:avg-effect-one-mode}
		& \P \Bigg\{\what{\mu} - \mu \geq  \sigma \sqrt{2 \log \frac{1}{\delta} \cdot \frac{1+ \epsilon_{N_{K}}}{N_K \alpha} \cdot \prn{1 - \alpha + \frac{\epsilon_{N_K}}{\alpha}}}   +  \frac{2 \max_{k \in [K]} B_k(1 + \epsilon_{N_k})}{3 \alpha} \log \frac{1}{\delta} \Bigg\}  \leq \delta.
	\end{align}
	\end{subequations}
\end{corollary}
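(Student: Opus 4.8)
The plan is to apply Theorem~\ref{thm:tensor-bernstein} with the fixed tensor $W=(p_{i_1\cdots i_K}Y_{i_1\cdots i_K})_{i_l\in[N_l]}\in\tspace$ and the random tensor $X=(\prod_{l=1}^K\ind_{i_l\in I_l})_{i_l\in[N_l]}\in\tspace$, which is mode-exchangeable since each $I_l$ is a uniformly random size-$n_l$ subset of $[N_l]$. Because $\E\<W,X\>_{\tspace}=\alpha_K\mu$ and $\what\mu=\frac{1}{\alpha_K}\<W,X\>_{\tspace}$, we have $\what\mu-\mu=\frac{1}{\alpha_K}(Z-\E Z)$ with $Z=\<W,X\>_{\tspace}$, so each of the two terms in the tail bound~\eqref{eq:tensor-bernstein-c} simply gets multiplied by $1/\alpha_K$, and it remains to upper bound the resulting threshold by the right-hand side of~\eqref{eq:tail-bound-avg-effect-general}. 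This reduces everything to evaluating the mode-wise quantities $\sigma^2_{W,k}$, $\wt\sigma^2_{X,k}$, and $\linf{\wb w_k}$ for these particular $W$ and $X$.

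First I would compute the $X$-side parameters. Averaging $\prod_l\ind_{i_l\in I_l}$ over modes $k+1,\dots,K$ gives $\wb X_k=c_k\cdot(\prod_{l=1}^k\ind_{i_l\in I_l})_{i_1\cdots i_k}$ with $c_k:=\prod_{l=k+1}^K\frac{n_l}{N_l}$, and the identity $\prod_{l=k+1}^K n_l=\frac{\alpha_K}{\alpha_k}\prod_{l=k+1}^K N_l$ yields the clean form $c_k=\alpha_K/\alpha_k$. Contracting against $\proj_{\ones_{N_k}}^\perp$ along mode $k$ turns $\ind_{i_k\in I_k}$ into $\ind_{i_k\in I_k}-n_k/N_k$, so using $\sum_{i_k}(\ind_{i_k\in I_k}-n_k/N_k)^2=n_k(1-n_k/N_k)$ and $\sum_{i_l}\ind_{i_l\in I_l}^2=n_l$ for $l<k$, a direct calculation gives
\[\sigma^2_{X,k}=\frac{\alpha_K^2\prod_{l=1}^k N_l}{N_k\alpha_k}\Big(1-\frac{n_k}{N_k}\Big),\qquad \linf{X_k\times_k\proj_{\ones_{N_k}}^\perp}^2=c_k^2\,\max\{n_k/N_k,\,1-n_k/N_k\}^2\,\ind_{n_k<N_k}\le c_k^2\,\ind_{n_k<N_k}.\]
Substituting into the definition of $\wt\sigma^2_{X,k}$ (and noting $\prod_{l=0}^{k-1}N_l=\prod_{l=1}^{k-1}N_l$) gives $\wt\sigma^2_{X,k}\le\frac{\alpha_K^2\prod_{l=1}^k N_l}{N_k\alpha_k}\big(1-\frac{n_k}{N_k}+\frac{\epsilon_{N_k}}{\alpha_k}\ind_{n_k<N_k}\big)$.

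Next I would combine. On the $W$-side the individual $\sigma^2_{W,k}$ are not needed: the telescoping identity noted after Theorem~\ref{thm:tensor-bernstein} gives $\sum_{k=1}^K\sigma^2_{W,k}=\norm{W}_{\tspace}^2-\norm{W_0}_{\tspace}^2$, and since $\norm{W}_{\tspace}^2=\sum p_{i_1\cdots i_K}^2 Y_{i_1\cdots i_K}^2$ and $\norm{W_0}_{\tspace}^2=\mu^2/\prod_l N_l$, this equals $\sigma^2/\prod_l N_l$. Plugging the bound on $\wt\sigma^2_{X,k}$ into~\eqref{eq:tensor-bernstein-c}, the $\alpha_K^2$ factors cancel and $\prod_{l=k+1}^K N_l\cdot\prod_{l=1}^k N_l=\prod_l N_l$, so the squared leading term of the bound for $\what\mu-\mu$ is at most
\[2\log\tfrac{1}{\delta}\cdot\prod_l N_l\cdot\sum_{k=1}^K\frac{(1+\epsilon_{N_k})\sigma^2_{W,k}}{N_k\alpha_k}\Big(1-\frac{n_k}{N_k}+\frac{\epsilon_{N_k}}{\alpha_k}\ind_{n_k<N_k}\Big)\le 2\log\tfrac{1}{\delta}\cdot\sigma^2\cdot\max_{k\in[K]}\frac{1+\epsilon_{N_k}}{N_k\alpha_k}\Big(1-\frac{n_k}{N_k}+\frac{\epsilon_{N_k}}{\alpha_k}\ind_{n_k<N_k}\Big),\]
where I pulled the maximum out of the sum and used $\prod_l N_l\sum_k\sigma^2_{W,k}=\sigma^2$; taking square roots recovers the leading term of~\eqref{eq:tail-bound-avg-effect-general}. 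For the additive term I would multiply $\frac{2}{3}\max_k(1+\epsilon_{N_k})\linf{\wb w_k}\log\frac{1}{\delta}$ by $1/\alpha_K$ and control $\linf{\wb w_k}$ by $B_k$: writing out $(\wb W_k\times_k\proj_{\ones_{N_k}}^\perp)_{i_1\cdots i_{k-1}i}=(\wb W_k)_{i_1\cdots i_{k-1}i}-\frac{1}{N_k}\sum_j(\wb W_k)_{i_1\cdots i_{k-1}j}$, summing over the prefix and applying the triangle inequality bounds it in terms of $\sum_{i_l:\,l\neq k}p_{i_1\cdots i_K}|Y_{i_1\cdots i_K}|\le B_k$ with $i_k=i$ fixed. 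Finally,~\eqref{eq:avg-effect-one-mode} is the special case $n_k=N_k$ for $k<K$: then $\alpha_k=1$ and both $1-n_k/N_k$ and $\ind_{n_k<N_k}$ vanish, so only $k=K$ survives the maximum and $\alpha_K=\alpha$.

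I expect the main effort to be bookkeeping rather than anything deep: carrying the mode-wise averaging notation ($\wb X_k$, $X_k$, the $\times_k\proj_{\ones_{N_k}}^\perp$ contractions) through correctly and pinning down the two identities $c_k=\alpha_K/\alpha_k$ and $\prod_l N_l\sum_k\sigma^2_{W,k}=\sigma^2$, which are exactly what make the telescoped sum collapse into the stated $\max_k$ form. The one genuinely delicate estimate is the additive (Bernstein) term, where the mode-$k$ centered slice $\linf{\wb w_k}$ of the weight tensor must be controlled by the assumed block-sum bound $B_k$; here one has to be careful that recentering along mode $k$ and then taking absolute values does not leak an extra constant beyond what the statement permits.
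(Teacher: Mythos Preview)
Your proposal is correct and follows essentially the same route as the paper's proof: apply Theorem~\ref{thm:tensor-bernstein} (via Corollary~\ref{cor:tensor-bernstein}) to $W=pY$ and the block-subsampling tensor $X$, compute $\wt\sigma^2_{X,k}$ explicitly from the identity $\wb X_k=c_k\prod_{l\le k}\ind_{i_l\in I_l}$ with $c_k=\alpha_K/\alpha_k$, use the telescoping relation $\prod_l N_l\sum_k\sigma^2_{W,k}=\sigma^2$ to pull out $\max_k$, bound $\linf{\wb w_k}\le B_k$, and then specialize to $n_k=N_k$ for $k<K$. The paper's Appendix~\ref{proof:avg-effect} carries out exactly these computations with the same intermediate expressions; your concern about the additive term is the only place where the paper is terser than you are (it simply asserts $\linf{\wb w_k}\le B_k$).
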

Informally speaking (assuming the variance term dominates in the preceding tail bounds), we can infer from Cor.~\ref{cor:avg-effect} that under a sub-sampling rate constraint $\alpha \geq 1/N_K$, we can control deviation with $1-\delta$ probability not exceeding
\begin{align*}
	O \prn{\sigma \sqrt{\log \frac{1}{\delta}} \cdot \frac{\sqrt{\log N_K}}{\alpha N_K} \vee \sqrt{\frac{1-\alpha}{\alpha N_K}}},
\end{align*}
recovering convergence rate for $N_K$ independent Bernoulli random variables with $p=\alpha$. 

As a comparison, we can consider a different sampling strategy: suppose $X_{i_1 \cdots i_K}$ are all i.i.d.\ Bernoulli random variables with $p=\alpha$. We expect the tail bound to be
\begin{align*}
	O \prn{\sigma \sqrt{\log \frac{1}{\delta}} \cdot \sqrt{\frac{1-\alpha}{\alpha N_1 \cdots N_K}}}
\end{align*}
by standard concentration inequality. We recommend using Cartesian-product sub-sampling primarily in situations where one dimension is much larger than all the others combined, i.e., when $N_K \gg N_1 N_2 \cdots N_{K-1}$. More generally, as mentioned in Section~\ref{sec:intro}, Cartesian product sub-sampling may be needed for practical reasons of implementation even if it does not lead to better concentration.
(Of course, since we are free to reorder $(N_1, \cdots, N_K)$ in Cor.~\ref{cor:avg-effect}, from Eq.~\eqref{eq:avg-effect-one-mode}, we can see that for any $i, j \in [K]$, the tail bound is tighter for sub-sampling along mode-$i$ against mode-$j$ under the same rate $\alpha$ if $N_i > N_j$.)


\paragraph{Numerical experiment}  To corroborate our theoretical predictions, we perform numerical simulations. For $K=3$, we construct the weight tensor $W_{ijk} = (ijk)^2 / (N_1 N_2 N_3)^2 \in [0,1]$. Consider the three Cartesian sub-sampling procedures
\begin{align*}
	I \times J \times K \subset [N_1] \times [N_2] \times [N_3]
\end{align*} 
such that (i) $|I|=0.4N_1, |J|=N_2, |K|=N_3$; (ii) $|I|=N_1, |J|=0.4N_2, |K|=N_3$; (iii) $|I|=N_1, |J|=N_2, |K|=0.4N_3$; and (iv) i.i.d.\ sub-sampling with $\mathsf{Bernoulli}(0.4)$, i.e., each entry is observed with probability $0.4$. Those four schemes correspond to the same $40\%$ sampling rate. 

Our theory in Cor.~\ref{cor:avg-effect} predicts estimation errors of $O(1/\sqrt{N_1})$, $O(1/\sqrt{N_2})$, $O(1/\sqrt{N_3})$ for sampling strategies (i), (ii), (iii), respectively, while the estimation error for i.i.d.\ sampling (iv) is $O(1/\sqrt{N_1 N_2 N_3})$. We report the simulation results and recover the predictions in Fig.~\ref{fig:exp-1} under two setups: (A) $(N_1, N_2, N_3) = (5,10, N)$ and (B) $(N_1, N_2, N_3) = (0.1N, 0.2N, 0.5N)$, with $N$ in $\{10, 20, \cdots, 500\}$. As predicted, these results show that i.i.d.\ sampling (iv) leads to the lowest error; however, when $N_1,N_2$ are small while $N_3$ is large (as in setting (A)), subsampling along mode $3$ (iii) has much lower error than subsampling across either of the other two modes as in (i) and (ii), as predicted; on the other hand, when $N_1,N_2,N_3$ are comparable (as in setting (B)), sampling strategies (i), (ii), (iii) show similar performance, although the relative ordering again agrees with the ordering of the dimensions, $N_1<N_2<N_3$.
\begin{figure}[htbp!]
	\centering
    {
    \footnotesize
	\begin{tabular}{c}
		\includegraphics[width=.8\linewidth]{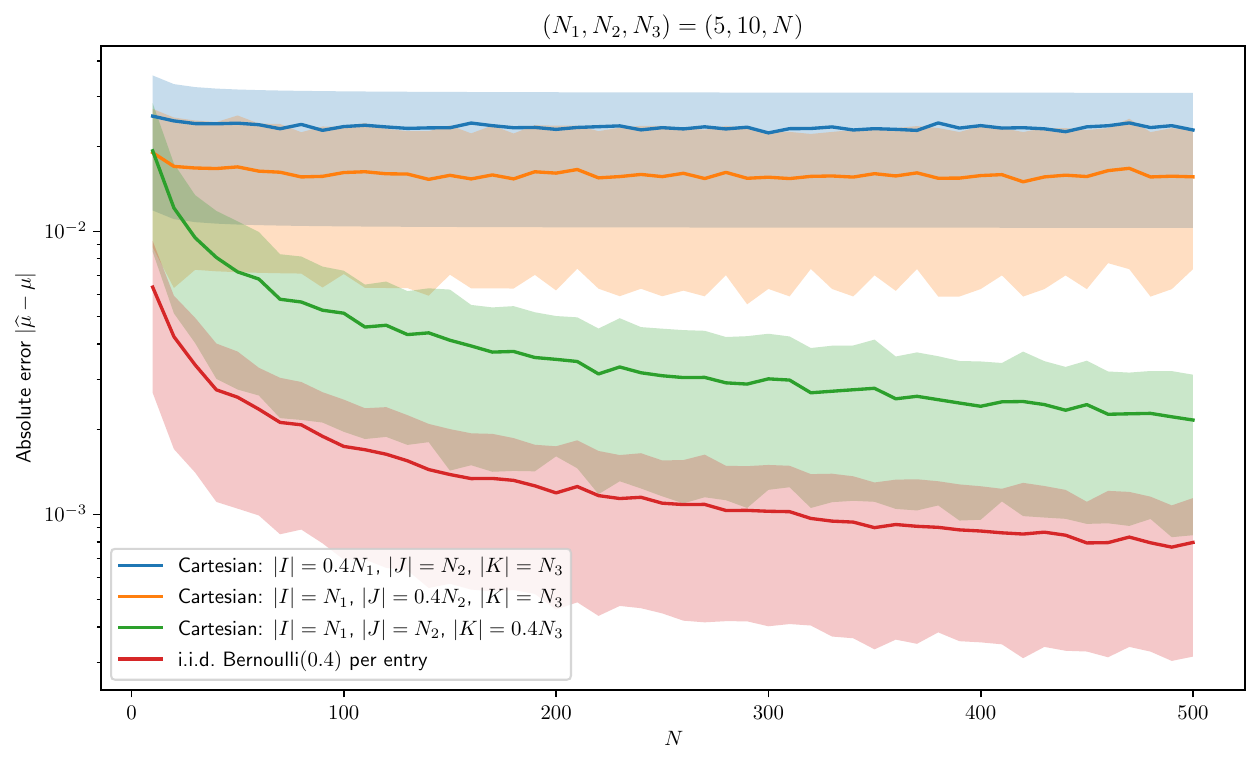} \\ 
        Experiment under setting (A) \\
        \includegraphics[width=.8\linewidth]{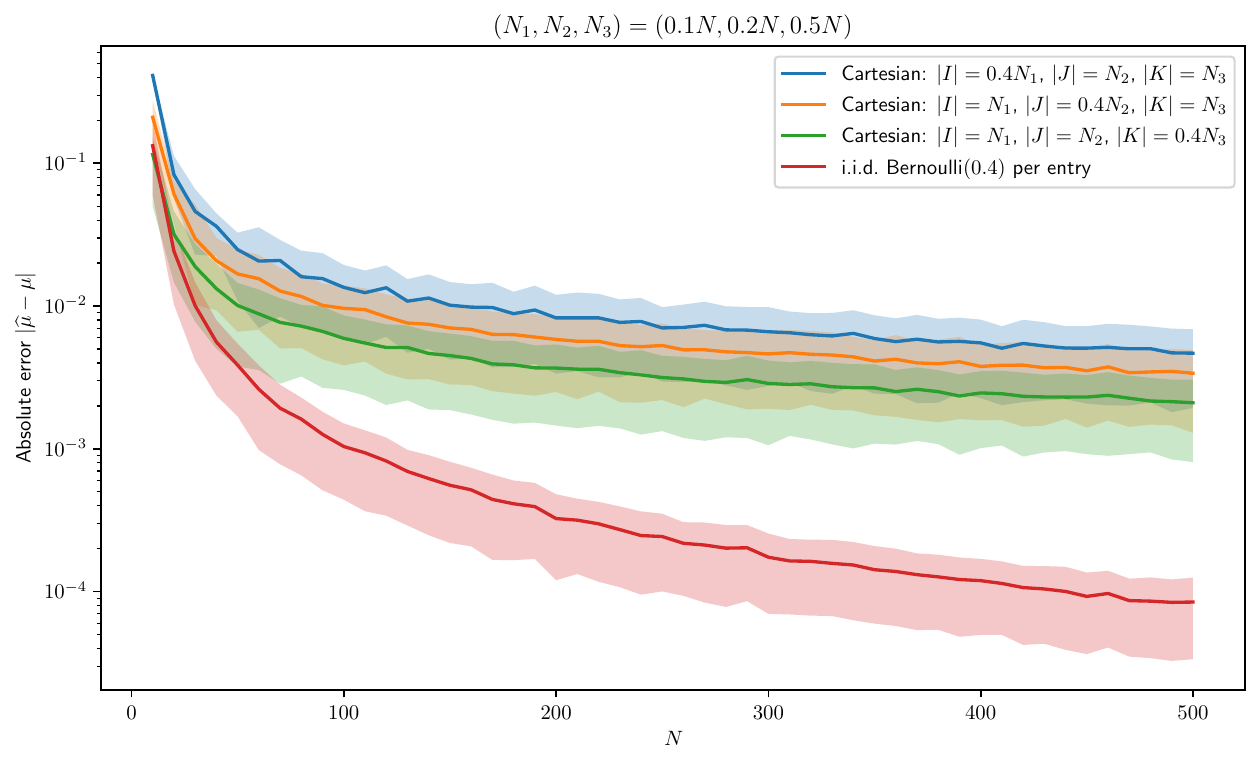} \\
		  Experiment under setting (B) 
	\end{tabular}
    }
	\caption{Numerical simulations of the sub-sampling procedures. Under both $(N_1, N_2, N_3)$ configurations, we simulate Cartesian product sub-sampling with (i) $|I|=0.4N_1, |J|=N_2, |K|=N_3$; (ii) $|I|=N_1, |J|=0.4N_2, |K|=N_3$; (iii) $|I|=N_1, |J|=N_2, |K|=0.4N_3$; and also include (iv) i.i.d.\ sub-sampling with $\mathsf{Bernoulli}(0.4)$. For each $N$ we run $T=1,000$ independent trials for each of the procedure, and report the mean of the absolute estimation errors of the Horvitz-Thompson estimator $|\what{\mu} - \mu|$, with error bands between the 25th and 75th quantiles.}  \label{fig:exp-1}
\end{figure}

\subsection{Fixed-design sketching by discrete sine transforms} \label{sec:dst-sketching}
Next, we apply our mathematical tools to Example~\ref{example:sketching}. Consider $X_k = \theta_k \in \R^{q \times r}$ a sequence of exchangeable matrices, where our goal is to estimate $\wb{\theta}=\frac{1}{N}\sum_{k=1}^N\theta_k$ while reducing communication cost via sketching. 

Let $U\in\R^{q\times q}$ be a fixed orthogonal matrix. Let $D_1,\dots,D_N$ be nonnegative diagonal matrices, each with at  most $q'$ nonzero entries, such that $\sum_{k=1}^N D_k = I_q$. For each $k$, we can construct a matrix $U_k\in\R^{q'\times q}$ such that $U_k^\top U_k = U^\top D_k U$---that is, the rows of $U_k$ are constructed by selecting (and rescaling) the rows of $U$ corresponding to nonzero entries of $D_k$.
These fixed matrices $U_1,\dots,U_N$ will then be used as the sketching matrices.

\begin{corollary} \label{cor:sketching}
	Let $\theta_1, \cdots, \theta_N$ be an exchangeable sequence of data matrices in $\R^{q \times r}$. Let $U_1,\dots,U_N\in\R^{q'\times q}$ be fixed matrices constructed as above.
    Assume there are constants $L, M > 0$ such that $\norm{\theta_k}_F \leq 1, \norm{U_k^\top U_k} = \norm{D_k} \leq L$ and $\norm{\sum_{k=1}^N U_k^\top U_k U_k^\top U_k} = \norm{\sum_{k=1}^N D_k^2} \leq M$, and define the variance parameter
	\begin{align*}
		\wt{\sigma}_\theta^2 := \frac{1}{N} \sum_{k=1}^N \norm{\theta_k - \wb{\theta}}_F^2 + \epsilon_N \max_{k \in [N]} \norm{\theta_k - \wb{\theta}}_F^2.
	\end{align*} 
	For all $\delta \in (0,1)$, it holds
	\begin{align}\label{eqn:sketching_tail_bound}
		\P \brc{\norm{\sum_{k=1}^N U_k^\top U_k \theta_k - \wb{\theta}} \geq \wt{\sigma}_\theta \sqrt{2 (1+\epsilon_N) q'M \log \frac{q+r}{\delta}}  + \frac{2L}{3} (1+\epsilon_N) \log \frac{q+r}{\delta} } \leq \delta.
	\end{align}
\end{corollary}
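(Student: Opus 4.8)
The plan is to cast this as an instance of the matrix-valued exchangeable framework of Section~\ref{sec:master-matrix-valued-bounds}: take weight matrices $W_k = U_k^\top U_k \in \R^{q\times q}$ and random matrices $X_k = \theta_k\in\R^{q\times r}$, and invoke the matrix Bernstein MGF bound, Theorem~\ref{thm:matrix-valued-bernstein}. First I would record the normalization. Since $U$ is orthogonal and $\sum_{k=1}^N D_k = I_q$, we have $\sum_{k=1}^N W_k = U^\top\big(\sum_k D_k\big)U = I_q$, hence $N\cdot \wb{W}\,\wb{X} = \big(\sum_k W_k\big)\wb{X} = \wb{\theta}$, so that $\llangle W, X\rrangle - N\cdot \wb{W}\,\wb{X} = \sum_{k=1}^N U_k^\top U_k\theta_k - \wb{\theta}$ is exactly the quantity appearing in~\eqref{eqn:sketching_tail_bound}; also $\norm{X_k} = \norm{\theta_k}\le\norm{\theta_k}_F\le1$, as required.

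The second, and main, step is to verify Assumption~\ref{assmp:commutativity}. The structural point is that each $W_k = U^\top D_k U$ with $D_k$ diagonal, so $W_k\theta_1 = U^\top D_k(U\theta_1)$ is a row-rescaling of $U\theta_1$ by the vector $d_k = \diag(D_k)$, which is fixed (independent of $\theta_1$) and can therefore be transposed to the right. Concretely, I would define an injection $\iota$ sending $\theta$ to an enlarged matrix built from $U^\top$ applied to the diagonal embeddings $\diag((U\theta)_{:,j})$ of the columns of $U\theta$, and a corresponding fixed matrix $V_k$ carrying the entries of $d_k$ (a Kronecker factor with $I_r$, composed with a row-selection onto the at most $q'$ active coordinates of $D_k$), so that $W_k\theta_1 = \iota(\theta_1)V_k$ on the support of $X_1$. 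One then checks that $\iota$ is injective, that $\wt{X}_k := \iota(\theta_k)$ is still an exchangeable sequence (pushforward of $X$ under $\iota$), and — using orthogonality of $U$ together with $\norm{\theta_k}_F\le1$ — that $\norm{\wt{X}_k}\le\norm{\theta_k}_F\le1$.

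With the commutativity pair in hand, I would apply Theorem~\ref{thm:matrix-valued-bernstein} and evaluate its parameters. Since $\norm{w}_\infty = \max_k\norm{W_k} = \max_k\norm{D_k}\le L$, we have $\min\{\norm{w}_\infty,\norm{v}_\infty\}\le L$, which both places the admissible range $|\lambda|\le\frac{3}{2L(1+\epsilon_N)}$ inside the theorem's range and produces the linear term $\frac{2L}{3}(1+\epsilon_N)\log\frac{q+r}{\delta}$. For the quadratic part, $\norm{\sum_k W_kW_k^\top} = \norm{\sum_k U^\top D_k^2 U} = \norm{\sum_k D_k^2}\le M$, while $\norm{\sum_k V_k^\top V_k}\le q'M$, the factor $q'$ coming from the bound on the number of active coordinates of each $D_k$ combined with $\norm{\sum_k D_k^2}\le M$. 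The variance parameters satisfy $\wt{\sigma}_X^2,\wt{\sigma}_{\wt{X}}^2\le\wt{\sigma}_\theta^2$, because $\norm{\cdot}\le\norm{\cdot}_F$, the map $\theta\mapsto U\theta$ preserves the Frobenius norm, and $\iota$ does not inflate the relevant Frobenius norms of $\theta_k-\wb{\theta}$ (so that $\sigma_X^2 = \norm{\Sigma}\le\frac1N\sum_k\norm{\theta_k-\wb{\theta}}_F^2$, $\max_k\norm{X_k-\wb{X}}^2\le\max_k\norm{\theta_k-\wb{\theta}}_F^2$, and likewise for $\wt{X}$). Substituting into the tail-bound consequence of Theorem~\ref{thm:matrix-valued-bernstein}, the variance term becomes $\max\{\wt{\sigma}_X^2\norm{\sum_k W_kW_k^\top},\,\wt{\sigma}_{\wt{X}}^2\norm{\sum_k V_k^\top V_k}\}\le\wt{\sigma}_\theta^2\,q'M$, which yields~\eqref{eqn:sketching_tail_bound}.

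The hard part will be the commutativity-pair construction in the second step: arranging the reshaping so that $\norm{\sum_k V_k^\top V_k}$ reflects the $q'$-sparsity of the individual $D_k$ — rather than the ambient dimension $q$ — while simultaneously keeping $\norm{\wt{X}_k}\le1$ so that Theorem~\ref{thm:matrix-valued-bernstein} applies. Once a suitable pair is identified, the remaining steps are routine operator-norm manipulations together with the passage from the MGF bound to the tail bound.
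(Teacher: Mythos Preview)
Your proposal is correct and follows essentially the same route as the paper: set $W_k = U_k^\top U_k$, $X_k = \theta_k$, build a commutativity pair from the factorization $W_k = U^\top D_k U$, verify $\|\wt{X}_k\|\le\|\theta_k\|_F\le 1$, $\|\sum_k V_k^\top V_k\| = \Tr(\sum_k D_k^2)\le q'M$, and $\wt\sigma_X^2,\wt\sigma_{\wt{X}}^2\le\wt\sigma_\theta^2$, then invoke Theorem~\ref{thm:matrix-valued-bernstein}. The paper's explicit pair is $\wt{X}_k = I_q\otimes(u_1^\top\theta_k,\ldots,u_q^\top\theta_k)\in\R^{q\times q^2r}$ rather than your diagonal-embedding variant, but both realizations lead to the same parameter bounds; just be careful that the ``row-selection'' you mention lives only in $V_k$ (the injection $\iota$ must be the same for all $k$, while the supports of the $D_k$ may differ).
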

We defer its proof to Appendix~\ref{proof:sketching} using an explicit construction of commutativity pairs. We point out that we present the result using $q'$ to show the benefit of dimension reduction by sketching in the straightforward fashion---however, Cor.~\ref{cor:sketching} can still provide tight bound in $N$ if $q' \asymp q$. For example, when $D_k = I_q/N$ (i.e., $q'=q$, and $M=1/N$), the dominant term in the tail bound~\eqref{eqn:sketching_tail_bound} scales as $q'M = q/N$.

To see how our result applies in a concrete setting, consider the orthogonal discrete sine transform (DST) matrix $U \in \R^{q \times q}$ whose entries are 
\begin{align*}
	U_{ij} = \sqrt{\frac{2}{q+1}}\sin \prn{\frac{\pi ij}{q+1}}.
\end{align*}
Let $U = \begin{bmatrix} u_1 & \cdots & u_q \end{bmatrix}^\top$, we define $U_k = \sqrt{q/(q'N)} \cdot \begin{bmatrix} u_{(k-1)q'+1} & \cdots & u_{(k-1)q' + q'} \end{bmatrix}^\top$ using the periodic index convention, that is, we define $u_{i'} = u_{i}$ for all $i' \in \mathbb{N}_+$ and $i \in [q]$ such that $i' \equiv i \pmod q$. When $q'N$ is a multiple of $q$, it is elementary to check $\sum_{k=1}^N U_k^\top U_k = U^\top U = I_q$. If in addition $q' \leq q$, Corollary~\ref{cor:sketching} then applies for $D_k$ whose $i$-th diagonal element is $q/(q'N)$ if $(k-1)q' < i \leq (k-1)q'+q'$ and zero otherwise. The parameters applying Corollary~\ref{cor:sketching} are then $L = M = q/(q'N)$, implying
$$
    \P \brc{\norm{\sum_{k=1}^N U_k^\top U_k \theta_k - \wb{\theta}} \geq \wt{\sigma}_\theta \sqrt{\frac{2 q(1+\epsilon_N)}{N} \cdot \log \frac{q+r}{\delta}}  + \frac{2q(1+\epsilon_N)}{3q'N}  \log \frac{q+r}{\delta} } \leq \delta.
$$
It is striking from the above bound that (i) there is no additional fluctuation arising from sampling the i.i.d.\ random variables $U_k$, since the leading stochastic term $O(\wt{\sigma}_\theta/\sqrt{N})$ depends solely on the averaged data matrices; (ii) the dominant noise term is independent of the dimension $q'$ of the compression space; and (iii) the bound becomes tighter in the higher-order noise term of order $O(1/N)$ as $q'$ increases.\smallskip

\paragraph{Numerical experiment} Let $q=1000, r =1, N=50$. We construct a set of exchangeable vectors $\theta_1, \cdots, \theta_N \simiid \normal(\mu, I_q/q)$ conditional on an independently sampled $\mu \sim \normal(0, \Sigma/q)$ where $\Sigma$ is the $\mathsf{AR}(1)$ covariance matrix $\Sigma_{ij} = \rho^{|i-j|}$. On the same dataset and a $q'$ such that $q'N$ is a multiple of $q$, we test four sketching schemes: 
\begin{enumerate}
    \item[(A)] the aforementioned fixed-design sketching matrices;
    \item[(B)] randomly sub-sampled without replacement DST sketching matrices $U_k^\top U_k =q/(q'N) \cdot U \wt{D}_k U$, where the diagonal entries of the matrices $\wt{D}_k$ are mutually independent random variables distributed according to $\mathsf{Bernoulli}(q'/q)$;
    \item[(C)] randomly sub-sampled with replacement DST matrices. Instead of (B), for each $\wt{D}_k$, independently generate $q$ random pairs $(i_l, \xi_l), l \in [q]$ where $i_l$ is uniform in $[q]$ and $\xi_l$ is from $\mathsf{Bernoulli}(q'/q)$. Then we construct $\diag(\wt{D}_k)_i = \sum_{l=1}^q \ind_{\xi_l=1, i_l=i}$;
    \item[(D)] random Gaussian sketching matrices, with $U_k$'s rows independently drawn from $\normal(0, I_q/(q'N))$;
\end{enumerate}
We report our numerical simulations in Fig.~\ref{fig:exp-1.5}, where we can clearly see the advantage of fixed-design sketching by (A), especially compared to the sub-sampling rows of DST with replacement by (C) and the fully randomized Gaussian sketching by (D).  
\begin{figure}[htbp!]
	\centering
    \includegraphics[width=.8\linewidth]{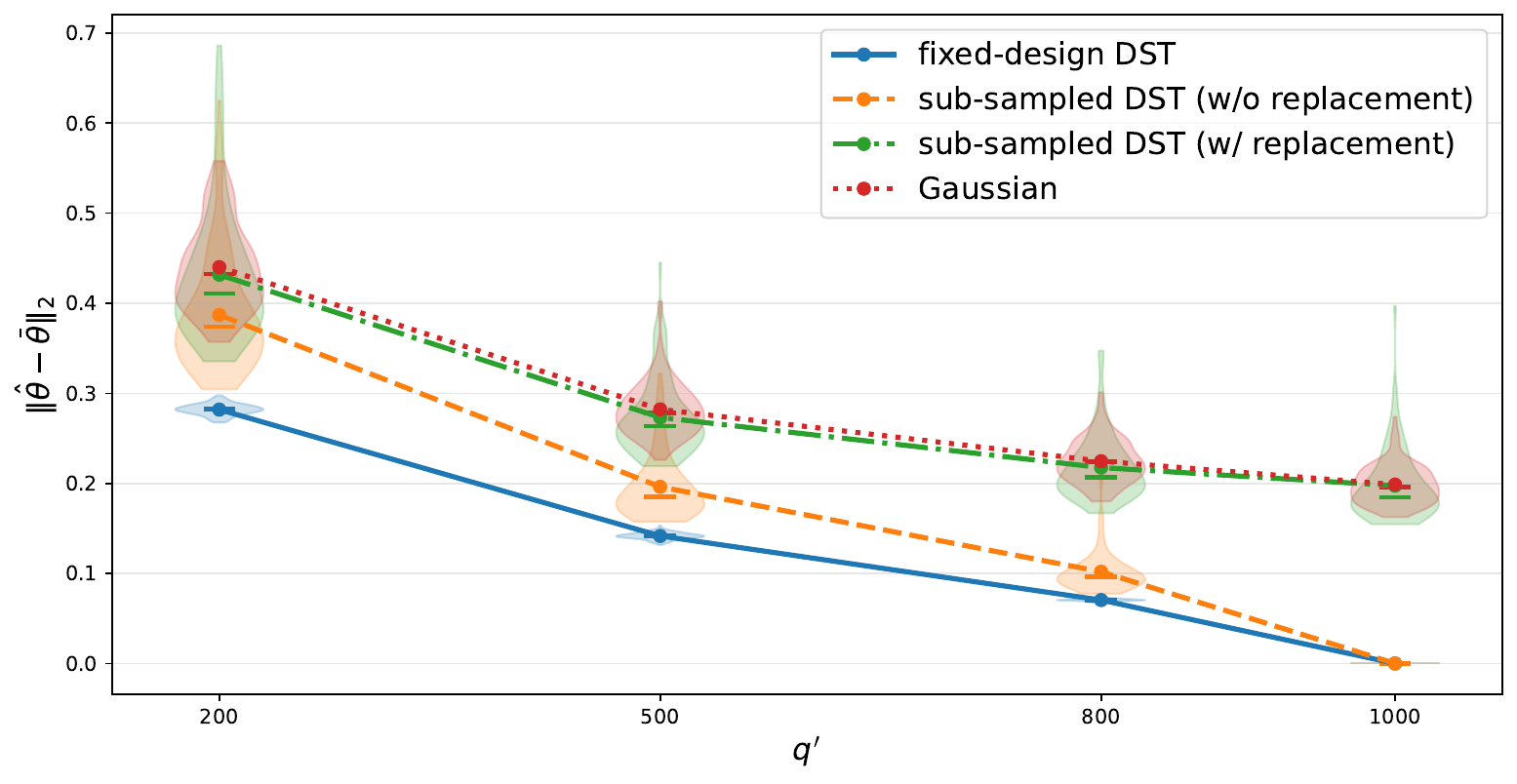}
	\caption{Violin plots of the error statistic $\|\what{\theta} - \wb{\theta}\|$ when $\rho = 0.99$, comparing the sketching schemes: (A) fixed-design DST, (B) sub-sampled DST (without replacement), (C) sub-sampled DST (with replacement) and (D) Gaussian sketching. We run the experiments for $q' \in \{200, 500, 800, 1000\}$ over $100$ trials. The bars ``\textendash'' represent the medians and dots ``\textperiodcentered'' represent the means. The curves connect the means between different values of $q'$. }  \label{fig:exp-1.5}
\end{figure}

\subsection{Communication-efficient ridge-tuned federated averaging} \label{sec:rtfa}

 Building on the simulation example of fixed-design sketching, we now turn to a concrete scenario in which federated averaging of mutually dependent yet exchangeable parameters $\theta_1, \cdots, \theta_N$ becomes practically relevant. Briefly mentioned in Example~\ref{example:sketching}, federated learning (FL) concerns a collection of $N$ clients or devices collaboratively solving machine learning models through local computation and coordination with a central server~\citep{kairouz2021advances}. 
 
 We study a generalization of the vanilla federated averaging~\citep{mcmahan2017communication} in~\citep{li2021ditto, cheng2021fine}, using broadcasts of the central server to fine-tune personalized models on local clients by a ridge-tuned penalty relative to the global parameter. See~\cite{t2020personalized} for similar ideas of communication with the central sever and interaction with local models to achieve personalization. Instead of the static averaging setup in Sec.~\ref{sec:dst-sketching}, at each round $t=0,1,2,\cdots, T$, we consider a global parameter $\wb{\theta}_t \in \R^{q \times r}$ initialized in $\wb{\theta}_0$ and local parameters $\theta_{k, t} \in \R^{q \times r}$ with initialization of i.i.d.\ in $t=0$ for $k \in [N]$. To this end, we introduce a sequence of publicly known fixed-design sketching matrices $U_k \in \R^{q' \times q}$ where $q' \ll q$, and $k \in [N]$ that satisfy $\sum_{k=1}^N U_k^\top U_k = I_q$. The algorithm then proceeds as follows in Algorithm~\ref{alg:RTFA}.
\begin{algorithm}
	\caption{Fixed-design sketching RTFA} \label{alg:RTFA}
	\begin{algorithmic}[1]
		\STATE {\bf Given:} Publicly known sketching matrices $U_1, \cdots, U_N \in \R^{q' \times q}$.
		\STATE {\bf Result:} The global parameter $\wb{\theta}_T$ and local parameters $\theta_{k, T}$.
		\FOR{$t \leftarrow 0$ {\bf to} $T-1$}
		\STATE The server broadcasts $\wb{\theta}_t$ to local agents.
		\STATE The server generates a fresh random permutation $\pi$ on $[N]$ and broadcasts to the agents.
		\FOR{$k \in [N]$ in parallel}
		\STATE Let $\theta_{k,t}^{(0)} = \theta_{k,t}$.
		\STATE Run $m_t$ steps of local gradient descents with step size $\eta$ on local agents such that
		\begin{align*}
			\theta_{k, t}^{(i+1)}  \leftarrow  \theta_{k, t}^{(i)} - \eta \prn{ \nabla L_k(\theta_{k, t}^{(i)}) + \lambda(\theta_{k, t}^{(i)} - \wb{\theta}_t) }, \qquad i=0, 1, \cdots, m_t-1.  
		\end{align*}
		\STATE $\theta_{k, t+1} \leftarrow \theta_{k, t}^{(m_t)}$. Upload the sketched parameter $\phi_{k,t+1} \to U_{\pi(k)} \theta_{k, t+1} \in \R^{p \times r}$.
		\ENDFOR
		\STATE The central server aggregates $\wb{\theta}_{t+1} = \sum_{k=1}^N U_{\pi(k)}^\top \phi_{k, t+1} = \sum_{k=1}^N U_k^\top U_k \theta_{\pi^{-1}(k), t+1}$.
		\ENDFOR
	\end{algorithmic}
\end{algorithm}

We point out that at each round $t$, $(\theta_{\pi^{-1}(1), t}, \cdots, \theta_{\pi^{-1}(N), t})$ are exchangeable but \emph{not} mutually independent---since they share mutual information from the previous aggregated estimate, $\wb{\theta}_t$. The fixed sequence of sketching matrices allow us to consider broader class of structured sketching design such as the Fourier basis, beyond generating fresh Gaussian random matrices~\citep{bartan2023distributed} or count sketches~\citep{ivkin2019communication} at each step to ensure independence, while greatly reduce uploading communication costs.

\paragraph{Numerical experiment} Let $q=1000, r =1, n= 200, N=50$ and $\theta^\star = e_1 = (1, 0, \cdots, 0)^\top \in \R^q$. Assume for each local agent, it observes data $X_k$ whose rows are i.i.d.\ from $\normal(0, I_q/q)$ and $y_k = X_k \theta^\star + \epsilon_k \in \R^n$. Suppose $\epsilon_1, \cdots, \epsilon_N \simiid \normal(0, \sigma^2 I_n)$, and the local loss functions are $L_k(\theta) = \ltwo{X_k \theta - y_k}^2$. To simplify the experiment, we set $m_t = \infty$ in Algorithm~\ref{alg:RTFA}, i.e. solving the optimization problem exactly at each round with closed-form updates
$$
	\theta_{k, t} = (X_k^\top X_k + \lambda I_q)^{-1} (X_k^\top y_k + \lambda \wb{\theta}_{t-1}), \qquad \text{for all } k \in [N].
$$
Without communication constraint, the stationary point when $t \to \infty$ for $\wb{\theta}_t$ is
\begin{align*}
	\wb{\theta}^\star = \frac{1}{N} \sum_{k=1}^N (X_k^\top X_k + \lambda I_q)^{-1} (X_k^\top y_k + \lambda \wb{\theta}^\star ),
\end{align*}
and thus
\begin{align*}
	\wb{\theta}^\star = \brc{\sum_{k=1}^N X_k^\top X_k (X_k^\top X_k + \lambda I_q)^{-1}}^{-1} \brc{\sum_{k=1}^N (X_k^\top X_k + \lambda I_q)^{-1} X_k^\top y_k}.
\end{align*}
Consider two sketching setups: (i) fixed design DST sketching matrices introduced in Sec.~\ref{sec:dst-sketching}; (ii) random Gaussian sketching matrices. We use (ii) as the baseline, with $U_k \in \R^{q' \times q}$ randomly generated at each round $t$ whose rows are i.i.d.\ vectors from $\normal(0, I_q/(q'N))$, in which case $\sum_{k=1}^N \E[U_k^\top U_k] = I_q$. Starting from zero initialization $\wb{\theta}_0 = 0$, we run Algorithm~\ref{alg:RTFA} setting $m_t=\infty$ and plot the error curves $\|\wb{\theta}_t - \wb{\theta}^\star\|$ at each iteration for the two sketching schemes. See Fig.~\ref{fig:exp-2} for the simulation results, which demonstrate using a fixed design sketching reduces variance inherent to Gaussian sketching matrices, greatly improving the federated averaging estimation accuracy.

\definecolor{mplblue}{RGB}{31,119,180}
\definecolor{mplorange}{RGB}{255,127,14}

\begin{figure}[htbp!]
	\centering
    { \footnotesize
    \begin{tabular}{c c}
	\includegraphics[width=.45\linewidth]{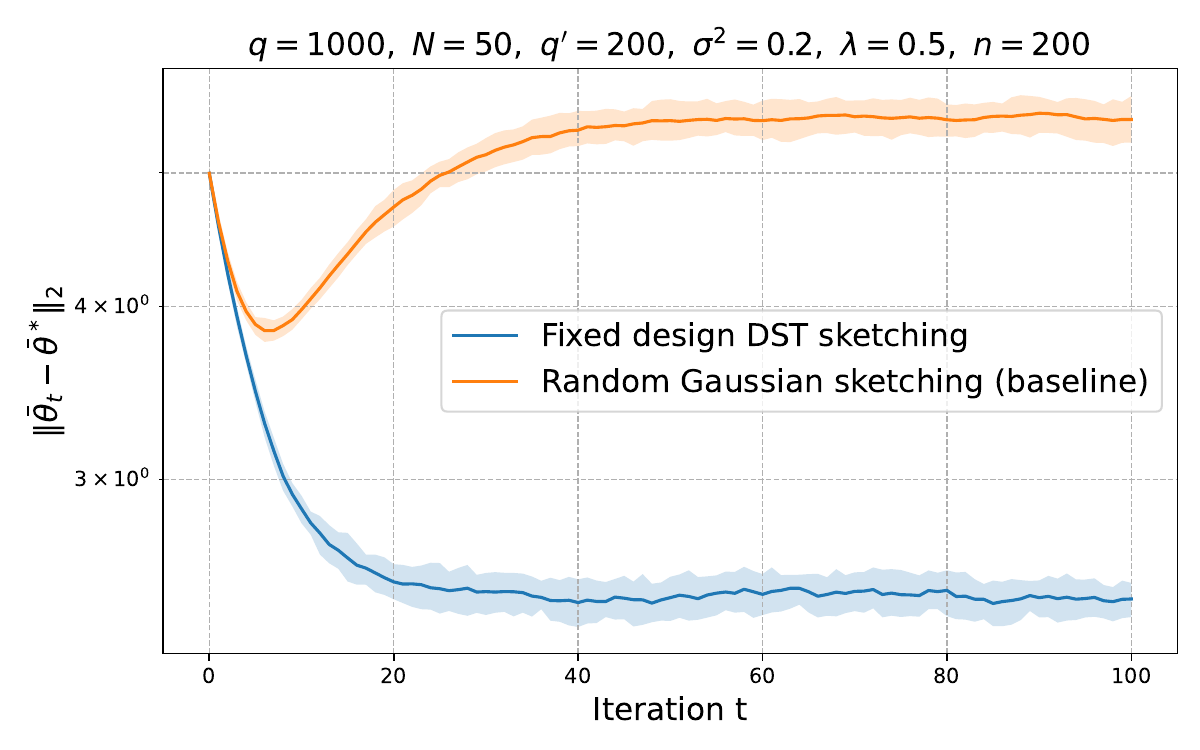} & 
	\includegraphics[width=.45\linewidth]{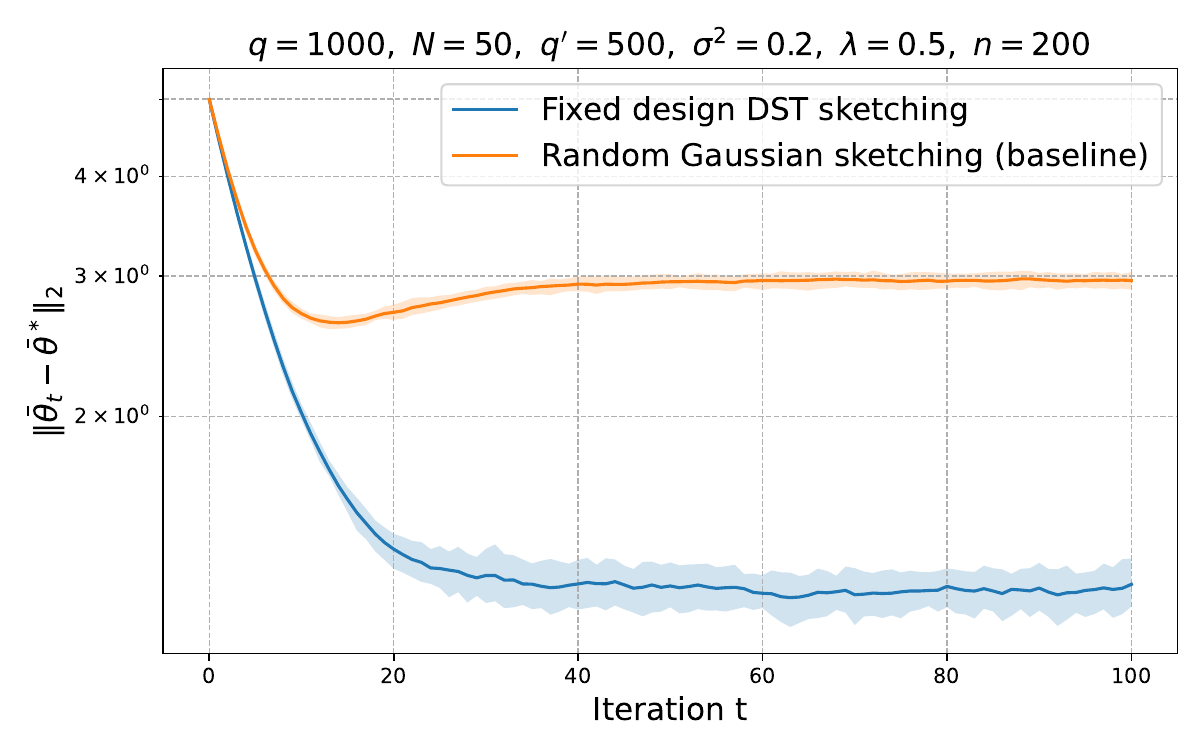} \\
    Experiment for $q'=200$ & Experiment for $q'=500$ \\
	\includegraphics[width=.45\linewidth]{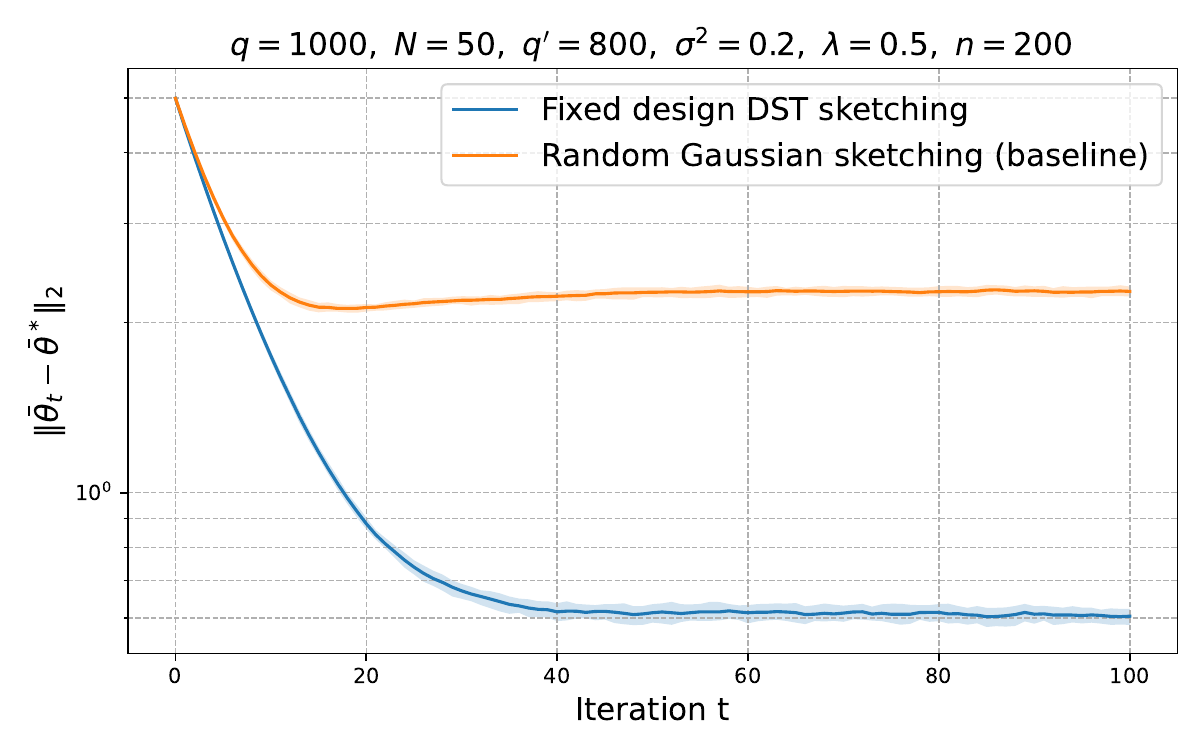} &
	\includegraphics[width=.45\linewidth]{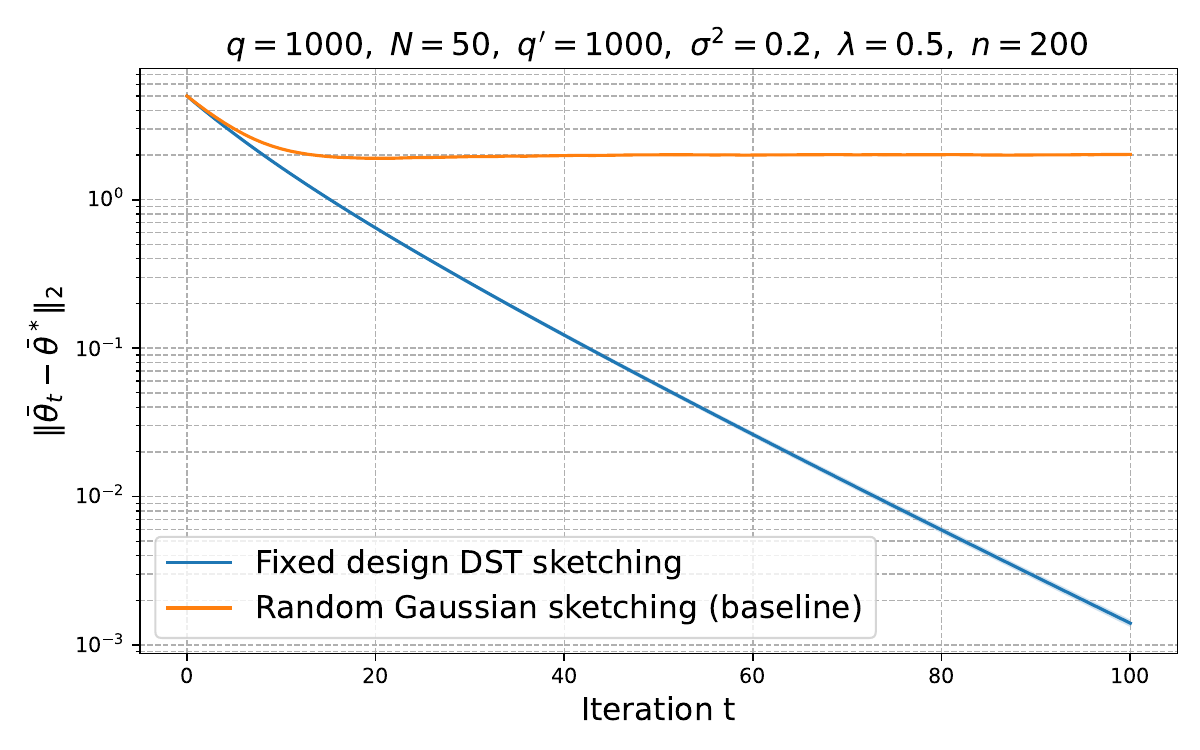} \\
    Experiment for $q'=800$ & Experiment for $q'=1000$ 
    \end{tabular}
    }
	\caption{Numerical simulations of sketching RTFA for (i) {\color{mplblue}blue: Fixed design DST sketching} vs. (ii) {\color{mplorange}{orange: Gaussian sketching}}. We run the experiments for $\sigma^2 = 0.2$, $\lambda = 0.5$ and $q' \in \{200, 500, 800, 1000\}$ over $100$ trials.  We plot the mean error curves of  $\|\wb{\theta}_t - \wb{\theta}^\star\|$ at each iteration $t$ with error bands between the 25th and 75th quantiles.}  \label{fig:exp-2}
\end{figure}

\section{Discussion}

We provide concentration bounds of Hoeffding and Bernstein types in mode-exchangeable tensors and weighted sums of exchangeable matrix-valued data. The bounds we obtain recover and improve previously known results, notably in the i.i.d.\ case, weighted sum of random scalars, and combinatorial sums of matrix arrays. We are able to leverage the richer class of distributional symmetry studied in our paper to analyze wider class of problems, including estimating average effect of multi-factor models and fixed-design sketching applying to federated averaging. Still, intriguing questions remain as future directions.

\begin{description}
	\item[Nonlinear exchangeable sums.] The results in our paper focus on concentration inequalities for the bilinear mapping $Z = \llangle W, X \rrangle$. It is interesting to ask if concentration occurs for a general nonlinear mapping $Z = f(W, X)$ under the same assumptions of exchangeability in tensors or matrix-valued data. In the unweighted and independent case, McDiarmid's inequality~\citep{mcdiarmid1989method} holds and suggests similar results should hold for the weighted and exchangeable case, under mild assumptions such as Lipschitz continuity of $f$. As a concrete example, for matrix-valued data, developing concentration bounds for nonlinear sum $Z = \sum_{k=1}^N A_k W_k B_k$ where $X =(A_k, B_k)$ is of crucial interest in the analyses for problems like blind deconvolution~\citep{chen2020noisy}.  
	
	\item[Nonasymptotic optimality.] As is previously raised in~\cite{foygel2024hoeffding}, an interesting question to ask is the nonasymptotic optimality for the multiplicative terms $(1+\epsilon_{N_k})$ in our theoretical results. In the unweighted case of sampling without replacement of random variables, Serfling's inequality~\citep{serfling1974probability} suggests such inflated constants are unnecessary (and indeed, sampling without replacement can lead to concentration that is even be stronger than the i.i.d.\ bounds); on the other hand,~\cite{foygel2024hoeffding} shows that for a general exchangeable setting some inflation is unavoidable. Indeed, recent work~\cite{kim2026sharper} shows the optimal rate in the scalar-valued Hoeffding bound is $O(\frac{1}{n})$, providing evidence for sharper results in the matrix-valued and tensor cases.
    In the setting of tensor data and matrix data studied here, are there settings where this inflation may be unnecessary (analogous to sampling without replacement)? And, what are concrete examples such that the inflated tails occur? A separate question is that of the role of the tensor structure: our bounds (cf.~Theorems~\ref{thm:matrix-valued-hoeffding} and~\ref{thm:matrix-valued-bernstein}) for multi-way tensors in general depend on the ordering of the modes $(N_1, \cdots, N_K)$. It is of course possible to optimize the bounds by choosing the best ordering, but it also suggests potentiality for improvement to results independent of the mode ordering.
	
	\item[Richer symmetry structures for matrix-valued data.] Our paper generalizes \citeauthor{foygel2024hoeffding}'s results to two separate branches shown in Table.~\ref{tab:main-results}. A natural future direction is to develop general results of sums like $Z=\sum_{i=1}^{N_1} \cdots \sum_{i=1}^{N_K} W_{i_1 \cdots i_K} X_{i_1 \cdots i_K}$ where $W_{i_1 \cdots i_K}$ and $X_{i_1 \cdots i_K}$ are all matrix-valued, and $X_{i_1 \cdots i_K}$ are mode-exchangeable, providing a unified framework allowing richer symmetry structures for matrix-valued data. This can be helpful in simplifying proofs for combinatorial sum of matrix arrays in Sec.~\ref{sec:combinatorial-array}, as naturally we can take $K=2$, $X_{ij} = A_{\pi(i), \pi'(j)}$ and $W_{ij} = \ind_{i=j}$, avoiding the careful construction of commutativity pairs in our paper.
	
	\item[Other symmetry groups and distributional invariances.] A higher level view of exchangeability is the relaxation of independence to distributional symmetry. Mode-wise and row-column-wise exchangeability are special cases of invariance under a group action and weaker symmetries, compared to the sequential exchangeability. However, the notion of distributional symmetry under group action can be much more general, such as cyclic group symmetry when $(X_1, \cdots, X_N) \eqdst (X_2, \cdots, X_N, X_1) \eqdst \cdots \eqdst (X_N, X_1, \cdots, X_{N-1})$. This occurs for $(X_1, \cdots, X_N) \sim \normal(0, \Sigma)$ where $\Sigma$ is a general semi-positive definite Toeplitz matrix, where the usual exchangeability breaks. The future direction is to develop concentration bounds given a general set of group action.
\end{description}

\section*{Acknowledgment}
R.F.B. was partially supported by the National Science Foundation via grant DMS-2023109. C.C. and R.F.B. were additionally supported by the Office of Naval Research via grant N00014-24-1-2544. We would like to thank Lester Mackey for discussions on Stein's method of exchangeable pairs. Chen Cheng would also like to thank Gary Cheng for helpful discussions.

\bibliography{bib}
\bibliographystyle{abbrvnat}

\newpage

\appendix

\section{Technical lemmas} \label{sec:technical}

This section contains technical lemmas used throughout the main proof body in the appendix. We may not explicitly refer to the lemmas when invoked, while keeping them here for completeness. 

\begin{lemma} \label{lem:tensor-matrix-product-property}
	For two tensors $A, B \in \tspace:= \R^{N_1 \times \cdots \times N_K}$, an integer $l \in [K]$ and two matrices $C ,D \in \R^{N_l \times N_l}$, it holds
	\begin{align*}
		A \times_l C \times_l D & = A \times_l DC, \\
		\<A \times_l C, B\>_\tspace & = \<A , B \times_l C^\top\>_\tspace.
	\end{align*}
\end{lemma}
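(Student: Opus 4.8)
The plan is to verify both identities by a direct entrywise computation from the definition of the mode-$l$ tensor--matrix product; there is no conceptual difficulty here, only index bookkeeping, so I will only indicate the key steps.

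First I would handle $A\times_l C\times_l D = A\times_l DC$. Fixing all indices except the mode-$l$ slot and writing $j'$ for the outer mode-$l$ index, I would expand the left-hand side using the definition of $\times_l$ twice, obtaining $\sum_j D_{j'j}\sum_{i_l}C_{ji_l}A_{\cdots i_l\cdots}$; interchanging the two finite sums and using $\sum_j D_{j'j}C_{ji_l}=(DC)_{j'i_l}$ gives $(A\times_l DC)_{\cdots j'\cdots}$. This is nothing more than associativity of matrix multiplication carried out along mode $l$.

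Next I would prove $\<A\times_l C,B\>_{\tspace}=\<A,B\times_l C^\top\>_{\tspace}$. Since $C$ is square, $A\times_l C$ again lies in $\tspace$, so the inner product is a sum over all $K$ indices. Substituting the definition of $A\times_l C$ and letting $\sum_{\ast}$ denote the sum over the $K-1$ indices other than the mode-$l$ one, the left-hand side becomes $\sum_{\ast}\sum_{i_l}\bigl(\sum_j C_{i_l j}A_{\cdots j\cdots}\bigr)B_{\cdots i_l\cdots}$. Reordering so that the $j$-sum is outermost rewrites this as $\sum_{\ast}\sum_j A_{\cdots j\cdots}\bigl(\sum_{i_l}(C^\top)_{ji_l}B_{\cdots i_l\cdots}\bigr)=\<A,B\times_l C^\top\>_{\tspace}$.

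The only point that requires any care---and the closest thing to an obstacle---is keeping track of which index of $C$ is contracted against $A$ and which against $B$; interchanging these two roles is precisely what produces the transpose in the second identity. Taken together, the two statements record that $\times_l$ is compatible with composition along mode $l$ and that $\times_l C^\top$ is the adjoint of $\times_l C$ with respect to $\<\cdot,\cdot\>_{\tspace}$.
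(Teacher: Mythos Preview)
Your proof is correct and is precisely the elementary entrywise verification the paper has in mind; the paper's own proof consists of the single line ``An elementary linear algebra exercise.'' Your write-up could in fact serve as the fleshed-out version of that exercise.
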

\begin{proof}
	An elementary linear algebra exercise.
\end{proof}

\begin{lemma} \label{lem:perm-avg-matrix}
	For any matrix $A \in \R^{n \times n}$ such that $A\ones_n = 0$, it holds that
	\begin{align*}
		\frac{1}{n!} \sum_{\Pi_n \in \permset_n} \Pi_n^\top A \Pi_n = \frac{\Tr(A)}{n-1} \cdot \proj_{\ones_n}^\perp.
	\end{align*}
\end{lemma}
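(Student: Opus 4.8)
The plan is to evaluate the averaged matrix $M := \frac{1}{n!}\sum_{\Pi_n\in\permset_n}\Pi_n^\top A\Pi_n$ directly, entry by entry, and then use the hypothesis $A\ones_n=0$ to fix the two remaining degrees of freedom. In coordinates, $M_{ij}=\frac{1}{n!}\sum_{\pi\in\permmapset_n}A_{\pi(i)\pi(j)}$, and I would observe that this quantity depends on $(i,j)$ only through whether $i=j$: for $i=j$ each index $k\in[n]$ appears equally often, giving $M_{ii}=\frac1n\sum_{k=1}^n A_{kk}=\tfrac{\Tr(A)}{n}$, while for $i\neq j$ each ordered pair $(k,l)$ with $k\neq l$ is realized by the same number of permutations, giving $M_{ij}=\frac{1}{n(n-1)}\sum_{k\neq l}A_{kl}$. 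Writing $s:=\sum_{k\neq l}A_{kl}$, this shows $M=\tfrac{\Tr(A)}{n}I_n+\tfrac{s}{n(n-1)}\,(\ones_n\ones_n^\top-I_n)$. (Alternatively, one can argue abstractly that $M$ is fixed under conjugation by every permutation matrix, hence lies in the span of $I_n$ and $\ones_n\ones_n^\top$, and then use $\Tr(M)=\Tr(A)$ together with $M\ones_n=0$; the direct count is equally short.)

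Next I would invoke the hypothesis: $A\ones_n=0$ means $\sum_{l=1}^n A_{kl}=0$ for every $k$, so summing over $k$ yields $0=\sum_{k,l}A_{kl}=\Tr(A)+s$, i.e.\ $s=-\Tr(A)$. Substituting and simplifying, $M=\tfrac{\Tr(A)}{n}I_n-\tfrac{\Tr(A)}{n(n-1)}(\ones_n\ones_n^\top-I_n)=\tfrac{\Tr(A)}{n-1}\bigl(I_n-\tfrac1n\ones_n\ones_n^\top\bigr)=\tfrac{\Tr(A)}{n-1}\proj_{\ones_n}^\perp$, which is exactly the claimed identity (recall $\proj_{\ones_n}^\perp=I_n-\tfrac1n\ones_n\ones_n^\top$).

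There is no real obstacle: the lemma reduces to a one-line symmetry observation plus a trace/row-sum computation. The only place needing a moment of care is the combinatorial bookkeeping that the off-diagonal entries are all equal (equivalently, that the $S_n$-average lands in the two-dimensional commutant of the permutation representation), together with the implicit assumption $n\geq 2$, without which $\tfrac{1}{n-1}$ and $\proj_{\ones_n}^\perp$ are degenerate — but then $A=0$ and the statement is vacuous anyway.
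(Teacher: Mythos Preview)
Your proof is correct and essentially the same as the paper's: the paper argues abstractly that the average is permutation-conjugation invariant, hence of the form $aI_n+b\ones_n\ones_n^\top$, then uses $A'\ones_n=0$ and the trace to pin down the constants---precisely the alternative you mention in parentheses. Your direct entry-wise count is just a concrete instantiation of the same symmetry observation, and both routes arrive at the identity in the same way.
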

\begin{proof}
	Let $A'$ be the left hand side quantity. From $\Pi_n^\top A' \Pi_n = A'$ for any $\Pi_n \in \permset_n$, $A'$ must have identical off-diagonal entries and identical diagonal entries, namely $A' = a I_n + b \ones_n \ones_n^\top$. On the other hand, 
	\begin{align*}
		A' \ones_n = \frac{1}{n!} \sum_{\Pi_n \in \permset_n} \Pi_n^\top A \Pi_n \ones_n = A' \ones_n = \frac{1}{n!} \sum_{\Pi_n \in \permset_n} \Pi_n^\top A \ones_n = 0
	\end{align*}
	enforces $A' = c \proj_{\ones_n}^\perp$. To find the constant $c$, we take trace on both sides which implies
	\begin{align*}
		(n-1) c & = c \Tr (\proj_{\ones_n}^\perp) = \frac{1}{n!} \sum_{\Pi_n \in \permset_n} \Tr(\Pi_n^\top A \Pi_n ) = \frac{1}{n!} \sum_{\Pi_n \in \permset_n} \Tr( A \Pi_n \Pi_n^\top) = \Tr(A).
	\end{align*}
\end{proof}

The next lemma concerns convex and concave properties of the normalized exponential operator $\wb{\exp}$ (i.e. the trace exponentials). Notably, the third statement originates from the fundamental Lieb's theorem~\cite[Thm.~6]{lieb1973convex} on convex trace functions. We refer to one of its corollary below, which is in~\cite[Cor.~3.3]{tropp2012user} and~\cite[Cor.~3.4.2]{tropp2015introduction}. The first two results are in~\cite[Sec.~2]{tropp2012user} and the last one is~\cite[Lemma 7.6]{tropp2012user}.

\begin{lemma} \label{lem:convex-concave-exp}
	Suppose $A$ and  $B$ are symmetric matrices in $\mathbb{S}^n \subset \R^{n \times n}$.
	\begin{enumerate}
		\item The mapping $A \mapsto \Tr \exp A$ is convex in $\mathbb{S}^n$, monotonically increasing w.r.t. the partial order $\prec$ induced by $\mathbb{S}_{++}^n$.
		\item (Golden-Thompson inequality) $\Tr \exp(A + B) \leq \Tr \prn{\exp A \exp B}$. 
		\item (Lieb's inequality) Suppose $A$ is fixed and $B$ is random, then $\E \Tr \exp(A + B) \leq \Tr \exp (A + \log \E \exp B)$. Here $\log$ is defined on the positive definite cone $\mathbb{S}_{++}^n$ such that $\log \exp B = B$ for all $B \in \mathbb{S}^n$.
		\item (Symmetrization) Suppose $A$ is fixed and $B$ is random with $\E B = 0$. Let $\epsilon$ be a Rademacher independent of $B$, then $\E \Tr \exp(A + B) \leq \E \Tr \exp(A + 2 \epsilon B)$.
	\end{enumerate}
\end{lemma}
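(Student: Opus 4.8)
The plan is to treat the four statements separately; each is a classical fact about trace--exponential functions, and the only genuinely deep ingredient is Lieb's concavity theorem used in part~3, so the bulk of the ``proof'' is careful citation supplemented with short self-contained arguments for the elementary parts.

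For part~1, convexity of $A\mapsto\Tr\exp(A)$ is an instance of the general principle that $A\mapsto\Tr f(A)$ is convex on $\mathbb{S}^n$ whenever $f$ is convex on $\R$; I would derive this from the variational formula expressing $\Tr f(A)$ as a supremum over orthonormal bases $U$ of $\sum_i f\big((U^\top A U)_{ii}\big)$, a supremum of convex functions of $A$, or simply cite \cite[Sec.~2]{tropp2012user}. Monotonicity with respect to $\prec$ follows from the Courant--Fischer min--max theorem: $A\preceq B$ forces $\lambda_k(A)\le\lambda_k(B)$ for each eigenvalue (sorted consistently), and since $t\mapsto e^t$ is increasing, $\Tr\exp(A)=\sum_k e^{\lambda_k(A)}\le\sum_k e^{\lambda_k(B)}=\Tr\exp(B)$.

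For part~2 (Golden--Thompson), I would invoke the Lie product formula $e^{A+B}=\lim_{m\to\infty}(e^{A/m}e^{B/m})^m$ together with the log--majorization $\prod_{i\le k}\lambda_i\big((e^{A/m}e^{B/m})^m\big)\le\prod_{i\le k}\lambda_i(e^{A}e^{B})$, and pass to the limit in the trace; alternatively cite the classical statement as packaged in \cite[Sec.~2]{tropp2012user}. For part~3, the claim is an immediate consequence of Lieb's concavity theorem: for fixed $A\in\mathbb{S}^n$, the map $C\mapsto\Tr\exp(A+\log C)$ is concave on $\mathbb{S}_{++}^n$ \cite[Thm.~6]{lieb1973convex}. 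Applying Jensen's inequality to this concave map at the random point $C=\exp(B)$ gives $\E\,\Tr\exp(A+B)=\E\,\Tr\exp(A+\log\exp B)\le\Tr\exp\big(A+\log\E\exp B\big)$. This is the step I expect to be the main obstacle, since Lieb's theorem has no short elementary proof; I would simply cite it (or the corollary \cite[Cor.~3.3]{tropp2012user}, \cite[Cor.~3.4.2]{tropp2015introduction}) rather than reprove it.

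For part~4 (symmetrization), let $B'$ be an independent copy of $B$, so $\E B'=0$. By Jensen's inequality applied to the convex map of part~1, $\Tr\exp(A+B)=\Tr\exp\big(\E_{B'}[A+B-B']\big)\le\E_{B'}\Tr\exp(A+B-B')$, and taking $\E_B$ as well gives $\E\,\Tr\exp(A+B)\le\E\,\Tr\exp(A+B-B')$. The difference $B-B'$ is symmetric, hence $B-B'\eqdst\varepsilon(B-B')$ for an independent Rademacher $\varepsilon$; writing $A+\varepsilon(B-B')=\tfrac12(A+2\varepsilon B)+\tfrac12(A-2\varepsilon B')$, using convexity of $\Tr\exp$ once more, and then $-2\varepsilon B'\eqdst 2\varepsilon B$, yields $\E\,\Tr\exp(A+B)\le\E\,\Tr\exp(A+2\varepsilon B)$, as claimed.
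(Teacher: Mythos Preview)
Your proposal is correct and, in fact, more detailed than what the paper provides: the paper does not give a proof of this lemma at all but simply cites \cite[Sec.~2]{tropp2012user} for parts~1--2, \cite[Thm.~6]{lieb1973convex} and \cite[Cor.~3.3]{tropp2012user} for part~3, and \cite[Lemma~7.6]{tropp2012user} for part~4. Your sketches are exactly the standard arguments found in those references, so the approaches coincide.
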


For a scalar random variable $Z$, the next lemma connects MGF bounds to one-sided tail event bounds. See~\cite[Sec.~2.3]{boucheron2013concentration} for the first result and \cite[Sec.~2.4]{boucheron2013concentration} for the second.
\begin{lemma} \label{lem:MGF-tail}
	For any random variable $Z \in \R$, if for some constants $a, b > 0$:
	\begin{enumerate}
		\item $\E[\exp\{\lambda Z\}] \leq \exp \{\lambda^2 a^2/2\}$ for all $\lambda \in \R$, it holds
		\begin{align*}
			\P \brc{Z \geq a \sqrt{2 \log \frac{1}{\delta}}}  \leq \delta, \qquad \text{for all }\delta \in (0, 1);
		\end{align*}
		\item $\E[\exp\{\lambda Z\}] \leq \exp \{\lambda^2 a^2/(2-2b|\lambda|)\}$ for all $|\lambda| < 1/b$, it holds
		\begin{align*}
			\P \brc{Z \geq a \sqrt{2 \log \frac{1}{\delta}} + b \log \frac{1}{\delta}}  \leq \delta, \qquad \text{for all }\delta \in (0, 1).
		\end{align*}
	\end{enumerate}
\end{lemma}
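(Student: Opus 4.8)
The plan is to derive both tail bounds by Chernoff's method followed by an optimization in the free parameter $\lambda$; only $\lambda>0$ is needed, so for the first part the hypothesis over all $\lambda\in\R$ is more than enough, and for the second only the range $0<\lambda<1/b$ matters. The common starting point is Markov's inequality applied to $e^{\lambda Z}$: for any $\lambda>0$ and threshold $t$, $\P\{Z\ge t\}\le e^{-\lambda t}\,\E[e^{\lambda Z}]$, into which I would substitute the assumed MGF control and then invert the resulting bound.

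For the first part, the sub-Gaussian hypothesis gives $\P\{Z\ge t\}\le\exp\{-\lambda t+\lambda^2 a^2/2\}$; minimizing the exponent over $\lambda>0$ at $\lambda=t/a^2$ yields $\P\{Z\ge t\}\le\exp\{-t^2/(2a^2)\}$, and setting the right-hand side equal to $\delta$ and solving for $t$ produces the stated threshold $t=a\sqrt{2\log(1/\delta)}$.

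For the second part, the sub-gamma hypothesis gives $\P\{Z\ge t\}\le\exp\{-\lambda t+\lambda^2 a^2/(2(1-b\lambda))\}$ for $0<\lambda<1/b$, and here I would not optimize blindly but exhibit the right $\lambda$. Writing $u=\log(1/\delta)$ and aiming for the threshold $t=a\sqrt{2u}+bu$, the stationarity condition for the exponent simplifies, after the substitution $s=b\lambda$, to $(1-s)^{-2}=1+2bt/a^2$, which points to $\lambda=\sqrt{2u}/(a+b\sqrt{2u})\in(0,1/b)$. A short computation then shows that at this $\lambda$ the exponent $-\lambda t+\lambda^2 a^2/(2(1-b\lambda))$ equals exactly $-u$; equivalently one checks that $\sup_{0<\lambda<1/b}\{\lambda t-\lambda^2 a^2/(2(1-b\lambda))\}=\tfrac{a^2}{2b^2}(\sqrt{1+2bt/a^2}-1)^2$ and that this quantity is at least $u$ when $t=a\sqrt{2u}+bu$. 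Either route gives $\P\{Z\ge a\sqrt{2u}+bu\}\le e^{-u}=\delta$.

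The only nonroutine step is this last algebraic verification. The naive choice $\lambda=t/(a^2+bt)$, which one is tempted to try, only yields the weaker bound $\exp\{-t^2/(2(a^2+bt))\}$ — correct in spirit but not sharp enough to invert to the clean threshold $a\sqrt{2u}+bu$, since $t^2/(2(a^2+bt))=u-b^2u^2/(2(a^2+bt))<u$ at $t=a\sqrt{2u}+bu$. So the care lies in using the exact optimizer (or, cleanly, the substitution $w=\sqrt{1+2bt/a^2}$, under which the Legendre transform collapses to $\tfrac{a^2}{2b^2}(w-1)^2$ in a line or two). Everything else is routine.
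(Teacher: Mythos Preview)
Your proposal is correct and follows the same Chernoff-bound approach the paper invokes (it defers this lemma to \cite[Sec.~2.3--2.4]{boucheron2013concentration} and, in the matrix analogue Lemma~\ref{lem:MGF-tail-matrix}, carries out precisely the same argument by exhibiting an explicit $\lambda$). Your identification of the exact optimizer $\lambda=\sqrt{2u}/(a+b\sqrt{2u})$ and the Legendre transform $\tfrac{a^2}{2b^2}(\sqrt{1+2bt/a^2}-1)^2$ is the standard computation from that reference, and your caution about the suboptimal choice $\lambda=t/(a^2+bt)$ is well placed.
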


When $Z \in \R^{n \times m}$, \citet{ahlswede2002strong} proposed using trace exponentials $\wb{\exp}$ and matrix MGF bounds to the operator norm bounds. Indeed, we can establish the following lemma. We point out that it recovers the two-sided version of preceding Lemma~\ref{lem:MGF-tail} when $n=m=1$.
\begin{lemma} \label{lem:MGF-tail-matrix}
	For any random matrix $Z \in \R^{n \times m}$, if for some constants $a, b > 0$:
	\begin{enumerate}
		\item $\E[\wb{\exp}\{\lambda Z\}] \leq \exp \{\lambda^2 a^2/2\}$ for all $\lambda \in \R$, it holds
		\begin{align*}
			\P \brc{\norm{Z} \geq a \sqrt{2 \log \frac{n+ m}{\delta}}}  \leq \delta, \qquad \text{for all }\delta \in (0, 1);
		\end{align*}
		\item $\E[\wb{\exp}\{\lambda Z\}] \leq \exp \{\lambda^2 a^2/(2-2b|\lambda|)\}$ for all $|\lambda| < 1/b$, it holds
		\begin{align*}
			\P \brc{\norm{Z} \geq a \sqrt{2 \log \frac{n+m}{\delta}} + b \log \frac{n+m}{\delta}}  \leq \delta, \qquad \text{for all }\delta \in (0, 1).
		\end{align*}
	\end{enumerate}
\end{lemma}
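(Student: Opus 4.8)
The plan is to reduce this matrix-valued claim to the scalar Chernoff argument already used for Lemma~\ref{lem:MGF-tail}, by passing through the self-adjoint dilation. Write $\mc{D}(A) := \begin{bmatrix} 0 & A \\ A^\top & 0\end{bmatrix} \in \mathbb{S}^{n+m}$ for $A \in \R^{n \times m}$. The first step is to record the two elementary facts that drive everything: (i) if $A = U\Sigma V^\top$ is a singular value decomposition then the eigenvalues of $\mc{D}(A)$ are the numbers $\pm\sigma_i(A)$ together with $|n-m|$ further zeros, so $\lambda_{\max}(\mc{D}(A)) = \norm{A}$ and the spectrum of $\mc{D}(A)$ is symmetric about the origin; and (ii) $\mc{D}(\lambda A) = \lambda\,\mc{D}(A)$, so that $\Tr\exp\brc{\mc{D}(\lambda Z)} = (n+m)\,\wb{\exp}(\lambda Z)$ directly from the definition of $\wb{\exp}$. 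Fact (i) is what lets a one-sided control of $\lambda_{\max}(\mc{D}(Z))$ already control $\norm{Z}$, so no separate lower-tail estimate is needed.

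Next I would run the Chernoff bound on $\lambda_{\max}(\mc{D}(Z))$. For $\lambda > 0$, monotonicity of $x \mapsto e^{\lambda x}$ gives $e^{\lambda \lambda_{\max}(\mc{D}(Z))} = \lambda_{\max}(\exp\brc{\lambda\,\mc{D}(Z)})$, and since the exponential of a symmetric matrix is positive semidefinite we have $\lambda_{\max}(\exp\brc{\lambda\,\mc{D}(Z)}) \leq \Tr\exp\brc{\lambda\,\mc{D}(Z)}$. Combined with Markov's inequality and fact (ii), this yields
\[
 \P\brc{\norm{Z} \geq t} = \P\brc{\lambda_{\max}(\mc{D}(Z)) \geq t} \leq e^{-\lambda t}\,\E\brk{\Tr\exp\brc{\lambda\,\mc{D}(Z)}} = (n+m)\,e^{-\lambda t}\,\E\brk{\wb{\exp}\brc{\lambda Z}}.
\]
Plugging in hypothesis (1) gives $\P\brc{\norm{Z}\geq t} \leq (n+m)\exp\brc{-\lambda t + \lambda^2 a^2/2}$; optimizing at $\lambda = t/a^2$ and solving $(n+m)e^{-t^2/(2a^2)} = \delta$ gives the first bound. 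Plugging in hypothesis (2) gives, for $0 < \lambda < 1/b$, the sub-gamma exponent $\P\brc{\norm{Z}\geq t}\leq (n+m)\exp\brc{-\lambda t + \lambda^2 a^2/(2 - 2b\lambda)}$, and the same Legendre-transform computation as in the scalar case (the choice $\lambda = t/(a^2 + bt)$, followed by the standard Bernstein inversion) gives the second bound. Equivalently, both parts follow immediately from the conclusions of Lemma~\ref{lem:MGF-tail} after replacing $\delta$ by $\delta/(n+m)$.

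I do not expect a genuine obstacle here: this is the Ahlswede--Winter recipe, and the only new ingredient relative to the scalar Lemma~\ref{lem:MGF-tail} is the inequality $\lambda_{\max}(\exp M) \leq \Tr\exp M$, which is precisely what produces the harmless $(n+m)$ prefactor (hence the $\log\frac{n+m}{\delta}$ rather than $\log\frac1\delta$). The only points that need care are the two bookkeeping facts above — that the dilation genuinely turns $\norm{Z}$ into $\lambda_{\max}$ of a symmetric matrix with symmetric spectrum, and that $\wb{\exp}(\lambda Z)$ is exactly the normalized trace exponential of $\lambda\,\mc{D}(Z)$ — after which the argument is verbatim the scalar one.
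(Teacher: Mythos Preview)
Your proposal is correct and follows essentially the same route as the paper: both use the dilation to identify $\norm{Z}$ with $\lambda_{\max}(\mc{D}(Z))$, bound $e^{\lambda\lambda_{\max}}\le\Tr\exp$ to pick up the factor $(n+m)$, and then invoke the scalar Chernoff/Bernstein optimization (your reduction to Lemma~\ref{lem:MGF-tail} with $\delta\mapsto\delta/(n+m)$ is the cleanest way to finish). One small caveat: the choice $\lambda=t/(a^2+bt)$ only yields the weaker bound $e^{-t^2/(2(a^2+bt))}$, whose inversion overshoots by a factor in the $b$-term; to land exactly on $a\sqrt{2L}+bL$ you need the true Legendre dual (equivalently, just cite Lemma~\ref{lem:MGF-tail}), which you do.
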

\begin{proof}
	We give a short proof of the lemma, which essentially proves Lemma~\ref{lem:MGF-tail} as well. Denote by $Z$'s singular values as $\sigma_1, \cdots, \sigma_{n \wedge m} \geq 0$, one can easily check that the dilated matrix satisfies
	\begin{align*}
		\mathsf{spec} \prn{\begin{bmatrix} 0 & A \\ A^\top & 0 \end{bmatrix}} \subset \{0, \sigma_1, \cdots, \sigma_{n \wedge m}, -\sigma_1, \cdots, -\sigma_{n \wedge m}\}.
	\end{align*}
	Thus $\wb{\exp}\{\lambda Z\} \geq \frac{1}{n+ m} \exp(\lambda \norm{Z})$ when $\lambda > 0$. As a consequence, applying the Chernoff bound,
	\begin{align*}
		\P \brc{\norm{Z} \geq t} \leq (n + m) \inf_{\lambda > 0} e^{-\lambda t} \E \brk{\wb{\exp}\{\lambda Z\}}.
	\end{align*}
	Substitution of $\lambda = \frac{t}{(n+m)a^2}$ in the first case and $$\lambda = \frac{2}{\sqrt{(n+m)^2a^4 + 2(n+m)a^2 bt} + (n+m)a^2 + 2bt}$$ in the latter completes the proof.
\end{proof}

\section{Proofs for Section~\ref{sec:master-tensor-bounds}} \label{proof:master-tensor-bounds}

\subsection{Proof of Theorem~\ref{thm:tensor-hoeffding}} \label{proof:tensor-hoeffding}
Let $\mc{F}_k = \sigma(\wb{X}_k)$ for $k=0, \cdots, K$, since $$(\wb{X}_{k})_{i_1\cdots i_k} = \frac{1}{N_{k+1}} \sum_{i_{k+1}=1}^{N_{k+1}}(\wb{X}_{k+1})_{i_1\cdots i_k i_{k+1}} ,$$
we have the filtration $\sigma(\wb{X}) = \mc{F}_0 \subset \cdots \subset \mc{F}_K = \sigma(X)$. Define the matrix $\mathsf{A}_n \in \R^{n \times n}$ whose $i$-th row is $(\zeros_{i-1}, 1, -\frac{1}{n-i} \cdot \ones_{n-i})$ when $ i \in [n-1]$, and its $n$-th row being $\zeros_n$. As a special case, $\mathsf{A}_1 = 0$. We also make use of the auxiliary matrix $\mathsf{B}_{n} \in \R^{n \times n}$, whose $i$-th row is $(\zeros_{i-1}, \frac{1}{n+1-i} \cdot \ones_{n+1-i})$, with the properties $\mathsf{A}_n^\dagger = I_n-\mathsf{B}_n^\top $ and $\mathsf{A}_n^\top (I_n - \mathsf{B}_n) = \proj_{\ones_n}^\perp$. Both matrices appear in~\cite{foygel2024hoeffding}. We have the following lemma for any fixed $U \in \tspace$ and its associated sequence $\wb{U}_k \in \tspace_k$.
\begin{lemma} \label{lem:tensor-hoeffding-plain}
	For any $k =0, 1, \cdots, K-1$ and the filtration $\{\mc{F}_k\}_{k=0}^K$ defined above, let $N_0=1$ and we have
	\begin{align*}
		\Ep \brk{\exp \brc{\lambda \prn{ \<\wb{U}_{k+1}, \wb{X}_{k+1}\>_{\tspace_{k+1}} - N_{k+1}  \<\wb{U}_k, \wb{X}_k\>_{\tspace_k}} } \mid \mc{F}_k} \leq \exp \brc{\frac{\lambda^2}{2} \cdot \prod_{l=0}^k N_l \cdot \norm{\wb{U}_{k+1} \times_{k+1} \mathsf{A}_{N_{k+1}} }_{\tspace_{k+1}}^2}.
	\end{align*} 
\end{lemma}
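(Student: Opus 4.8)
The plan is to rewrite the quantity inside the exponential as an inner product against a ``Doob-martingale transform'' of $\wb{X}_{k+1}$ along mode $k+1$, and then control the conditional moment generating function one mode-$(k+1)$ slice at a time, following the skeleton of the scalar argument in \cite{foygel2024hoeffding}. \emph{Step 1 (algebraic reduction).} First I would verify the identity
\[
\langle\wb{U}_{k+1},\wb{X}_{k+1}\rangle_{\tspace_{k+1}} - N_{k+1}\langle\wb{U}_k,\wb{X}_k\rangle_{\tspace_k} \;=\; \langle\wb{U}_{k+1}\times_{k+1}\proj_{\ones_{N_{k+1}}}^\perp,\ \wb{X}_{k+1}\rangle_{\tspace_{k+1}},
\]
which holds because $\wb{U}_{k+1}\times_{k+1}\proj_{\ones_{N_{k+1}}}$ broadcasts $\wb{U}_k$ over mode $k+1$ while averaging $\wb{X}_{k+1}$ along mode $k+1$ returns $\wb{X}_k$. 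Then, using $\proj_{\ones_n}^\perp=\mathsf{A}_n^\top(I_n-\mathsf{B}_n)$ together with both identities in Lemma~\ref{lem:tensor-matrix-product-property} (namely $A\times_l C\times_l D=A\times_l DC$ and $\langle A\times_l C,B\rangle=\langle A,B\times_l C^\top\rangle$), I would shuffle these matrices across the inner product to rewrite the exponent as $\langle C,\wt{X}\rangle_{\tspace_{k+1}}$, where $C:=\wb{U}_{k+1}\times_{k+1}\mathsf{A}_{N_{k+1}}\in\tspace_{k+1}$ is a fixed tensor and $\wt{X}:=\wb{X}_{k+1}\times_{k+1}(I_{N_{k+1}}-\mathsf{B}_{N_{k+1}})$.

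\emph{Step 2 (martingale structure along mode $k+1$).} View $\wb{X}_{k+1}$ as an ordered tuple $(\mathscr{X}_1,\dots,\mathscr{X}_{N_{k+1}})$ of its mode-$(k+1)$ slices, each $\mathscr{X}_j\in\tspace_k$ with entries in $[-1,1]$ (they are averages of entries of $X$), and write $C_{\cdot j}\in\tspace_k$ for the corresponding slices of $C$, so that $\langle C,\wt{X}\rangle_{\tspace_{k+1}}=\sum_{j=1}^{N_{k+1}}Y_j$ with $Y_j:=\langle C_{\cdot j},(\wt{X})_{\cdot j}\rangle_{\tspace_k}$. Introduce the filtration $\mc{G}_j:=\mc{F}_k\vee\sigma(\mathscr{X}_1,\dots,\mathscr{X}_j)$, so $\mc{G}_0=\mc{F}_k$. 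From the explicit form of $I_{N_{k+1}}-\mathsf{B}_{N_{k+1}}$, the $j$-th slice of $\wt{X}$ is $\mathscr{X}_j-\frac{1}{N_{k+1}+1-j}\sum_{i\ge j}\mathscr{X}_i$. Since $\sum_i\mathscr{X}_i=N_{k+1}\wb{X}_k$ is $\mc{F}_k$-measurable, conditional on $\mc{G}_{j-1}$ this slice equals $\mathscr{X}_j$ minus a $\mc{G}_{j-1}$-measurable tensor; hence $Y_j$ is $\mc{G}_j$-measurable and, conditional on $\mc{G}_{j-1}$, lies in an interval of width at most $2\norm{C_{\cdot j}}_1$. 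Moreover, mode exchangeability of $X$ along mode $k+1$ (Def.~\ref{def:mode-ex}) is inherited by $\wb{X}_{k+1}$ and leaves $\wb{X}_k$ fixed, so conditional on $\mc{G}_{j-1}$ the remaining slices $\mathscr{X}_j,\dots,\mathscr{X}_{N_{k+1}}$ are exchangeable, giving $\Ep[\mathscr{X}_j\mid\mc{G}_{j-1}]=\frac{1}{N_{k+1}+1-j}\sum_{i\ge j}\mathscr{X}_i$ and therefore $\Ep[Y_j\mid\mc{G}_{j-1}]=0$. Finally $Y_{N_{k+1}}=0$, because the last row of $\mathsf{A}_{N_{k+1}}$ vanishes and hence $C_{\cdot N_{k+1}}=0$.

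\emph{Step 3 (assemble).} Applying the classical Hoeffding lemma conditionally to each $Y_j$ yields $\Ep[\exp\{\lambda Y_j\}\mid\mc{G}_{j-1}]\le\exp\{\lambda^2\norm{C_{\cdot j}}_1^2/2\}$, and iterating the tower property over $j=N_{k+1},\dots,1$ along $\mc{F}_k=\mc{G}_0\subseteq\mc{G}_1\subseteq\cdots\subseteq\mc{G}_{N_{k+1}}$ gives $\Ep[\exp\{\lambda\sum_j Y_j\}\mid\mc{F}_k]\le\exp\{\tfrac{\lambda^2}{2}\sum_j\norm{C_{\cdot j}}_1^2\}$. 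Finally, Cauchy--Schwarz on each slice gives $\norm{C_{\cdot j}}_1^2\le\big(\prod_{l=1}^kN_l\big)\norm{C_{\cdot j}}_{\tspace_k}^2$, so $\sum_j\norm{C_{\cdot j}}_1^2\le\prod_{l=0}^kN_l\cdot\norm{C}_{\tspace_{k+1}}^2=\prod_{l=0}^kN_l\cdot\norm{\wb{U}_{k+1}\times_{k+1}\mathsf{A}_{N_{k+1}}}_{\tspace_{k+1}}^2$ (using $N_0=1$), which is exactly the claimed bound.

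\emph{Main obstacle.} The delicate point is the conditional exchangeability assertion in Step 2: given $\mc{G}_{j-1}$, the slices $\mathscr{X}_j,\dots,\mathscr{X}_{N_{k+1}}$ remain exchangeable. Justifying it requires observing that permuting mode $k+1$ of $X$ commutes with averaging out modes $k+2,\dots,K$ (so $\wb{X}_{k+1}\times_{k+1}\Pi\eqdst\wb{X}_{k+1}$ for any permutation matrix $\Pi$ on $[N_{k+1}]$) and that any such $\Pi$ fixes $\wb{X}_k$ (since $\ones^\top\Pi=\ones^\top$); conditioning on $\mc{F}_k$ together with the first $j-1$ slices then leaves the remaining slices exchangeable. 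Everything else is routine: the linear-algebra shuffling of Step 1, the standard martingale/Hoeffding argument of Steps 2--3, and the bookkeeping check that averaging $X$ over modes keeps all entries in $[-1,1]$ so that the width bound $2\norm{C_{\cdot j}}_1$ is valid.
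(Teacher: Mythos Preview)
Your proposal is correct and follows essentially the same approach as the paper's proof. The paper introduces the intermediate filtration $\mc{F}_{k,i}=\mc{G}_i$, applies Hoeffding's lemma to each increment $\sum_{i_1^k}\wb{V}_{i_1^k i}\big((\wb{X}_{k+1})_{i_1^k i}-(\wb{X}_{k+1})_{i_1^k,\geq i}\big)$ with the same $\ell_1$ range bound and the same Cauchy--Schwarz step, and only afterwards substitutes $\wb{V}=\wb{U}_{k+1}\times_{k+1}\mathsf{A}_{N_{k+1}}$ and identifies the telescoping sum as $\langle\wb{U}_{k+1}\times_{k+1}\proj_{\ones_{N_{k+1}}}^\perp,\wb{X}_{k+1}\rangle_{\tspace_{k+1}}$; you simply perform this algebraic reduction up front rather than at the end, but the martingale construction, the conditional exchangeability argument, and every estimate are the same.
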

We defer its proof to Appendix~\ref{proof:tensor-hoeffding-plain}. Having established the above lemma, we can refine it into the next result.
\begin{lemma} \label{lem:tensor-hoeffding-jensen}
	For any $k =0, 1, \cdots, K-1$ and the filtration $\{\mc{F}_k\}_{k=0}^K$ defined above, let $N_0=1$ and $W \in \tspace$ be the weight tensor. We have
	\begin{align*}
		& \Ep \brk{\exp \brc{\lambda \prn{ \<\wb{W}_{k+1}, \wb{X}_{k+1}\>_{\tspace_{k+1}} - N_{k+1}  \<\wb{W}_k, \wb{X}_k\>_{\tspace_k}} } \mid \mc{F}_k} \nonumber \\
		& \leq \exp \brc{\frac{\lambda^2 (1+ \epsilon_{N_{k+1}})}{2} \cdot  \prod_{l=0}^k N_l \cdot \norm{\wb{W}_{k+1} \times_{k+1} \proj_{\ones_{N_{k+1}}}^\perp }_{\tspace_{k+1}}^2}.
	\end{align*} 
\end{lemma}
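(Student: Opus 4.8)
The plan is to reduce the left-hand side of Lemma~\ref{lem:tensor-hoeffding-jensen} to exactly the quantity already controlled by Lemma~\ref{lem:tensor-hoeffding-plain}, and then sharpen the resulting bound by a symmetrization along mode $k+1$. First I would record the algebraic reduction. Since $\wb{W}_k=\wb{W}_{k+1}\times_{k+1}\frac{1}{N_{k+1}}\ones_{N_{k+1}}^\top$ and likewise for $X$, two applications of the adjoint identity of Lemma~\ref{lem:tensor-matrix-product-property} give $N_{k+1}\<\wb{W}_k,\wb{X}_k\>_{\tspace_k}=\<\wb{W}_{k+1}\times_{k+1}\proj_{\ones_{N_{k+1}}},\wb{X}_{k+1}\>_{\tspace_{k+1}}$, hence
\[
\<\wb{W}_{k+1},\wb{X}_{k+1}\>_{\tspace_{k+1}}-N_{k+1}\<\wb{W}_k,\wb{X}_k\>_{\tspace_k}=\<M,\wb{X}_{k+1}\>_{\tspace_{k+1}},\qquad M:=\wb{W}_{k+1}\times_{k+1}\proj_{\ones_{N_{k+1}}}^\perp .
\]
In particular the conditional MGF on the left of Lemma~\ref{lem:tensor-hoeffding-jensen} depends on $W$ only through the mode-$(k+1)$-centered tensor $M$, and it coincides with the left-hand side of Lemma~\ref{lem:tensor-hoeffding-plain} taken with $U=W$.

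Next I would exploit the hidden symmetry. Conditionally on $\mc{F}_k=\sigma(\wb{X}_k)$, the tensor $\wb{X}_{k+1}$ is exchangeable along mode $k+1$: applying $\times_{k+1}\Pi_{k+1}$ to $X$ preserves its law by Definition~\ref{def:mode-ex}, this is inherited by $\wb{X}_{k+1}$ after averaging over modes $k+2,\dots,K$, and a further average over mode $k+1$ returns $\wb{X}_k$ regardless of $\Pi_{k+1}$, so $(\wb{X}_{k+1},\wb{X}_k)\eqdst(\wb{X}_{k+1}\times_{k+1}\Pi_{k+1},\wb{X}_k)$. Therefore, for every $\Pi\in\permset_{N_{k+1}}$, applying Lemma~\ref{lem:tensor-hoeffding-plain} with $U=W\times_{k+1}\Pi$ (for which $\wb{U}_{k+1}=\wb{W}_{k+1}\times_{k+1}\Pi$ and $\wb{U}_k=\wb{W}_k$, since permuting a constant row vector changes nothing) and then matching conditional laws via the display above, the left-hand side of Lemma~\ref{lem:tensor-hoeffding-jensen} is bounded by $\exp\{\frac{\lambda^2}{2}\prod_{l=0}^k N_l\cdot\norm{M\times_{k+1}\mathsf{A}_{N_{k+1}}\Pi}_{\tspace_{k+1}}^2\}$; here I use $\mathsf{A}_{N_{k+1}}\ones_{N_{k+1}}=0$, hence $\mathsf{A}_{N_{k+1}}=\mathsf{A}_{N_{k+1}}\proj_{\ones_{N_{k+1}}}^\perp$, to rewrite $\wb{U}_{k+1}\times_{k+1}\mathsf{A}_{N_{k+1}}=M\times_{k+1}\mathsf{A}_{N_{k+1}}\Pi$.

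Since the left-hand side does not depend on $\Pi$, I can combine these bounds over $\Pi\in\permset_{N_{k+1}}$: the minimum over $\Pi$ is tightest, but taking instead the geometric mean already yields a clean closed form, since Lemma~\ref{lem:perm-avg-matrix} evaluates $\frac{1}{N_{k+1}!}\sum_{\Pi}\Pi^\top\mathsf{A}_{N_{k+1}}^\top\mathsf{A}_{N_{k+1}}\Pi=\frac{\Tr(\mathsf{A}_{N_{k+1}}^\top\mathsf{A}_{N_{k+1}})}{N_{k+1}-1}\proj_{\ones_{N_{k+1}}}^\perp$, turning the exponent into $\frac{\lambda^2}{2}\prod_{l=0}^k N_l\cdot\frac{\Tr(\mathsf{A}_{N_{k+1}}^\top\mathsf{A}_{N_{k+1}})}{N_{k+1}-1}\norm{M}_{\tspace_{k+1}}^2$ with $\norm{M}_{\tspace_{k+1}}^2=\norm{\wb{W}_{k+1}\times_{k+1}\proj_{\ones_{N_{k+1}}}^\perp}_{\tspace_{k+1}}^2$, exactly of the stated shape but with $\frac{\Tr(\mathsf{A}_n^\top\mathsf{A}_n)}{n-1}=1+\frac{H_{n-1}}{n-1}$ in place of $1+\epsilon_n$.

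The main obstacle is that $1+\frac{H_{n-1}}{n-1}$ is strictly larger than the target $1+\epsilon_n=\frac{n-1}{n-H_n}$ for $n\ge3$, and this gap is genuine for the plain permutation trick: since $\Pi^\top\mathsf{A}_n^\top\mathsf{A}_n\Pi$ keeps the same trace on $\ones_n^\perp$ for every $\Pi$, even the minimum over reorderings cannot do better than $1+\frac{H_{n-1}}{n-1}$ uniformly over the mode-$(k+1)$ Gram structure of $M$. Closing the gap to the sharp $\epsilon_n$ — which emerges, as in~\cite{foygel2024hoeffding}, from the extremal harmonic weight profile along the revealed mode — is the crux, and I would handle it by reopening the proof of Lemma~\ref{lem:tensor-hoeffding-plain}: insert the random relabeling $\Pi$ of the mode-$(k+1)$ indices before its one-slice-at-a-time martingale decomposition, and carry the sharper bound on the conditional ranges of the martingale increments (the ones tracking how the conditional mean of the not-yet-revealed slices shifts) through the telescoping sum, with Lemma~\ref{lem:perm-avg-matrix} again averaging over $\Pi$. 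Once the correct intermediate quantity is isolated, the remaining estimates are precisely the routine ones packaged in the definition of $\epsilon_n$.
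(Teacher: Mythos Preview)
Your reduction to the mode-$(k+1)$-centered tensor $M$ and the use of permutation symmetry along mode $k+1$ are both correct, and you correctly identify that taking $U=W\times_{k+1}\Pi$ and averaging only the right-hand side yields the constant $\frac{\Tr(\mathsf{A}_n^\top\mathsf{A}_n)}{n-1}=1+\frac{H_{n-1}}{n-1}$, which is strictly too large for $n\ge3$. The gap is therefore real, and you are right that it cannot be closed by any reordering trick applied to $\mathsf{A}_n^\top\mathsf{A}_n$ alone.

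Where your proposal falls short is in the fix. You suggest reopening the martingale proof of Lemma~\ref{lem:tensor-hoeffding-plain} and threading sharper increment bounds through the telescoping sum; this is vague, and in any case unnecessary. The paper closes the gap without touching the martingale argument at all, by a different choice of $U$ in the black-box application of Lemma~\ref{lem:tensor-hoeffding-plain}: take $\wb{U}_{k+1}=\wb{W}_{k+1}\times_{k+1}(\mathsf{A}_n^\top\mathsf{A}_n)^\dagger\Pi$ (with $n=N_{k+1}$) and simultaneously replace $X$ by $X\times_{k+1}\Pi$. Then the left-hand side of Lemma~\ref{lem:tensor-hoeffding-plain} becomes $\E[\exp\{\lambda\<\wb{W}_{k+1}\times_{k+1}\Pi^\top(\mathsf{A}_n^\top\mathsf{A}_n)^\dagger\Pi,\wb{X}_{k+1}\>\}\mid\mc{F}_k]$, while the right-hand side norm is $\norm{\wb{U}_{k+1}\times_{k+1}\mathsf{A}_n}^2=\<\wb{W}_{k+1}\times_{k+1}\Pi^\top(\mathsf{A}_n^\top\mathsf{A}_n)^\dagger\Pi,\wb{W}_{k+1}\>$, since $\mathsf{A}_n(\mathsf{A}_n^\top\mathsf{A}_n)^\dagger\mathsf{A}_n^\top\mathsf{A}_n(\mathsf{A}_n^\top\mathsf{A}_n)^\dagger=(\mathsf{A}_n^\top\mathsf{A}_n)^\dagger$ on $\ones_n^\perp$. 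The crucial point is that \emph{both} sides are now linear in the \emph{same} matrix $\Pi^\top(\mathsf{A}_n^\top\mathsf{A}_n)^\dagger\Pi$. Jensen's inequality (averaging over $\Pi$) and Lemma~\ref{lem:perm-avg-matrix} replace this matrix by $\frac{\Tr((\mathsf{A}_n^\top\mathsf{A}_n)^\dagger)}{n-1}\proj_{\ones_n}^\perp=\frac{n-H_n}{n-1}\proj_{\ones_n}^\perp=\frac{1}{1+\epsilon_n}\proj_{\ones_n}^\perp$ on both sides simultaneously; a final reparameterization $\lambda\to\lambda(1+\epsilon_n)$ then produces exactly the stated bound. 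The pseudo-inverse, not a refined martingale analysis, is the missing idea.
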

\begin{proof}
	From $\mathsf{A}_n \ones_n = 0$, we have $\mathsf{A}_{n}\proj_{\ones_{n}}^\perp = \mathsf{A}_{n}$ and $(\mathsf{A}_{n}^\top \mathsf{A}_{n})^\dagger \proj_{\ones_{n}}^\perp = (\mathsf{A}_{n}^\top \mathsf{A}_{n})^\dagger$. In fact, we can derive explicitly from $I_n - \mathsf{B}_n = \mathsf{A}_n (\mathsf{A}_n^\top \mathsf{A}_n)^\dagger$ that
	\begin{align*}
		(\mathsf{A}_{n}^\top \mathsf{A}_{n})^\dagger = (I_n - \mathsf{B}_{n}^\top)(I_n - \mathsf{B}_{n}).
	\end{align*}
	For any $\Pi_{k+1} \in \permset_{N_{k+1}}$, since $X \times_{k+1} \Pi_{k+1} \eqdst X$, we can substitute $\wb{U}_{k+1} = \wb{W}_{k+1} \times_{k+1} (\mathsf{A}_{N_{k+1}}^\top \mathsf{A}_{N_{k+1}})^\dagger \Pi_{k+1}$ and replace $X$ with $X \times_{k+1} \Pi_{k+1} $ in Lemma~\ref{lem:tensor-hoeffding-plain}. Simplifying through the following calculations,
	\begin{align*}
		& \<\wb{U}_{k+1}, \wb{(X \times_{k+1} \Pi_{k+1})}_{k+1}\>_{\tspace_{k+1}} - \<\wb{U}_k, \wb{(X \times_{k+1} \Pi_{k+1})}_k\>_{\tspace_k} =  \<\wb{U}_{k+1}, \wb{(X \times_{k+1} \Pi_{k+1})}_{k+1} \times_{k+1} \proj_{\ones_{N_{k+1}}^\perp}\>_{\tspace_{k+1}} \\
		& = \< \wb{W}_{k+1} \times_{k+1} (\mathsf{A}_{N_{k+1}}^\top \mathsf{A}_{N_{k+1}})^\dagger \Pi_{k+1}, \wb{X}_{k+1} \times_{k+1} \Pi_{k+1} \times_{k+1}  \proj_{\ones_{N_{k+1}}^\perp}\>_{\tspace_{k+1}} \\
		& \stackrel{\mathrm{(i)}}{=} \< \wb{W}_{k+1} \times_{k+1} \Pi_{k+1}^\top \proj_{\ones_{N_{k+1}}^\perp} (\mathsf{A}_{N_{k+1}}^\top \mathsf{A}_{N_{k+1}})^\dagger \Pi_{k+1}, \wb{X}_{k+1}\>_{\tspace_{k+1}} \\
		& \stackrel{\mathrm{(ii)}}{=} \< \wb{W}_{k+1} \times_{k+1} \Pi_{k+1}^\top (\mathsf{A}_{N_{k+1}}^\top \mathsf{A}_{N_{k+1}})^\dagger \Pi_{k+1}, \wb{X}_{k+1}\>_{\tspace_{k+1}}, \\
		&  \norm{\wb{U}_{k+1} \times_{k+1} \mathsf{A}_{N_{k+1}} }_{\tspace_{k+1}}^2 = \norm{\wb{W}_{k+1} \times_{k+1} (\mathsf{A}_{N_{k+1}}^\top \mathsf{A}_{N_{k+1}})^\dagger \Pi_{k+1} \times_{k+1} \mathsf{A}_{N_{k+1}} }_{\tspace_{k+1}}^2 \\
		& = \<\wb{W}_{k+1} \times_{k+1} \mathsf{A}_{N_{k+1}}(\mathsf{A}_{N_{k+1}}^\top \mathsf{A}_{N_{k+1}})^\dagger \Pi_{k+1}, \wb{W}_{k+1} \times_{k+1} \mathsf{A}_{N_{k+1}}(\mathsf{A}_{N_{k+1}}^\top \mathsf{A}_{N_{k+1}})^\dagger \Pi_{k+1}\>_{\tspace_{k+1}} \\
		&  \stackrel{\mathrm{(i)}}{=}  \<\wb{W}_{k+1} \times_{k+1}  \Pi_{k+1}^\top (\mathsf{A}_{N_{k+1}}^\top \mathsf{A}_{N_{k+1}})^\dagger \Pi_{k+1}, \wb{W}_{k+1} \>_{\tspace_{k+1}},
	\end{align*}
	where in (i) we apply Lemma~\ref{lem:tensor-matrix-product-property} and in (ii) we use $\mathsf{A}_n \ones_n = 0$, we can thus obtain
	\begin{align*}
		&\Ep \brk{\exp \brc{\lambda \< \wb{W}_{k+1} \times_{k+1} \Pi_{k+1}^\top (\mathsf{A}_{N_{k+1}}^\top \mathsf{A}_{N_{k+1}})^\dagger \Pi_{k+1}, \wb{X}_{k+1}\>_{\tspace_{k+1}}} \mid \mc{F}_k} \\
		&\leq \exp \brc{\frac{\lambda^2}{2} \cdot \prod_{l=0}^k N_l \cdot \<\wb{W}_{k+1} \times_{k+1}  \Pi_{k+1}^\top(\mathsf{A}_{N_{k+1}}^\top \mathsf{A}_{N_{k+1}})^\dagger \Pi_{k+1}, \wb{W}_{k+1} \>_{\tspace_{k+1}}}.
	\end{align*} 
	Next we apply Jensen's inequality, averaging over all permutation matrices in $\Pi_{k+1} \in \permset_{N_{k+1}}$ yields
	\begin{align*}
		&\Ep \brk{\exp \brc{\lambda \< \wb{W}_{k+1} \times_{k+1} \frac{1}{N_{k+1}!} \sum_{\Pi_{k+1} \in \permset_{N_{k+1}}} \Pi_{k+1}^\top (\mathsf{A}_{N_{k+1}}^\top \mathsf{A}_{N_{k+1}})^\dagger \Pi_{k+1}, \wb{X}_{k+1}\>_{\tspace_{k+1}}} \mid \mc{F}_k} \\
		&\leq \exp \brc{\frac{\lambda^2}{2} \cdot \prod_{l=0}^k N_l \cdot \<\wb{W}_{k+1} \times_{k+1}  \frac{1}{N_{k+1}!} \sum_{\Pi_{k+1} \in \permset_{N_{k+1}}}  \Pi_{k+1}^\top(\mathsf{A}_{N_{k+1}}^\top \mathsf{A}_{N_{k+1}})^\dagger \Pi_{k+1}, \wb{W}_{k+1} \>_{\tspace_{k+1}}}.
	\end{align*} 
	Invoking Lemma~\ref{lem:perm-avg-matrix} gives further simplification of
	\begin{align*}
		\frac{1}{N_{k+1}!} \sum_{\Pi_{k+1} \in \permset_{N_{k+1}}} \Pi_{k+1}^\top (\mathsf{A}_{N_{k+1}}^\top \mathsf{A}_{N_{k+1}})^\dagger \Pi_{k+1} & = \frac{\Tr((\mathsf{A}_{N_{k+1}}^\top \mathsf{A}_{N_{k+1}})^\dagger)}{N_{k+1}-1} \cdot \proj_{\ones_{N_{k+1}}}^\perp.
	\end{align*}
	We complete the proof through
	\begin{align*}
		\Tr((\mathsf{A}_{N_{k+1}}^\top \mathsf{A}_{N_{k+1}})^\dagger) & = \Tr((I_{N_{k+1}} - \mathsf{B}_{N_{k+1}}^\top)(I_{N_{k+1}} - \mathsf{B}_{N_{k+1}})) = \norm{I_{N_{k+1}} - \mathsf{B}_{N_{k+1}}^\top}_F^2 \\
		& = \sum_{i=1}^{N_{k+1}} \brc{\prn{\frac{{N_{k+1}}-i}{{N_{k+1}}+1-i}}^2 + ({N_{k+1}}-i) \cdot \frac{1}{({N_{k+1}}+1-i)^2}} = {N_{k+1}} - H_{N_{k+1}},
	\end{align*}
	and replacing $\lambda$ with $\lambda (1 + \epsilon_{N_{k+1}})$.
\end{proof}

Returning to the main proof, the conclusion follows from applying $K$ times of tower rule and Lemma~\ref{lem:tensor-hoeffding-jensen}. Indeed by substituting
\begin{align*}
	 \<W_k, X_k\>_{\tspace} & =  \<\wb{W}_k \otimes \ones_{N_{k+1}} \cdots \otimes \ones_{N_K}, \wb{X}_k \otimes \ones_{N_{k+1}} \cdots \otimes \ones_{N_K}\>_{\tspace} = \prod_{l=k+1}^K N_l \cdot \<\wb{W}_k, \wb{X}_k\>_{\tspace_k},  \\
	\norm{\wb{W}_k \times_k \proj_{\ones_{N_k}}^\perp }_{\tspace_k}^2 & = \frac{1}{\prod_{l=k+1}^K N_l} \norm{W_k \times_k \proj_{\ones_{N_k}}^\perp }_{\tspace}^2,
\end{align*} 
and replacing $\lambda$ with $(\prod_{l=k+2}^K N_l) \cdot \lambda $ in Lemma~\ref{lem:tensor-hoeffding-jensen}, it follows
	\begin{align*}
	& \Ep \brk{\exp \brc{\lambda \prn{ \prod_{l=k+2}^K N_l \cdot  \<\wb{W}_{k+1}, \wb{X}_{k+1}\>_{\tspace_{k+1}} - \prod_{l=k+1}^K N_l \cdot  \<\wb{W}_k, \wb{X}_k\>_{\tspace_k}} } \mid \mc{F}_k} \nonumber \\
	& \leq \exp \brc{\frac{\lambda^2 (1+ \epsilon_{N_{k+1}})}{2} \cdot \prn{\prod_{l=k+2}^K N_l}^2 \cdot \prod_{l=0}^k N_l \cdot \frac{1}{\prod_{l=k+2}^K N_l} \cdot \norm{W_{k+1} \times_{k+1} \proj_{\ones_{N_{k+1}}}^\perp }_{\tspace}^2} \nonumber \\
	& = \exp \brc{\frac{\lambda^2}{2} \cdot \prod_{l=0}^K N_l \cdot \frac{1 + \epsilon_{N_{k+1}}}{N_{k+1}} \cdot \norm{W_{k+1} \times_{k+1} \proj_{\ones_{N_{k+1}}}^\perp }_{\tspace}^2}.
\end{align*} 
Denoting by $M_k := \prod_{l=k+1}^K N_l \cdot \< \wb{W}_k, \wb{X}_k\>_{\tspace_k}$, we then finish the proof by $N_0=1$ and
\begin{align*}
	&\Ep \brk{\exp \brc{\lambda \prn{ \<W, X\>_{\tspace} - \prod_{l=1}^K N_l \cdot \wb{W}\;\wb{X} }}}  = \Ep \brk{\exp \brc{\lambda \prn{ \<\wb{W}_{K}, \wb{X}_{K}\>_{\tspace_{K}} - \prod_{l=1}^K N_l \cdot \<\wb{W}_0, \wb{X}_0\>_{\tspace_0} }}} \\
	& =  \Ep \brk{\Ep \brk{\exp \brc{\lambda \prn{ M_K - M_{K-1} }} \mid \mc{F}_{K-1}} \cdot \exp \brc{\lambda \prn{ M_{K-1} - M_0 }}} \\
	& \leq \exp \brc{\frac{\lambda^2}{2} \cdot \prod_{l=0}^K N_l \cdot \frac{1 + \epsilon_{N_K}}{N_K} \cdot \norm{W_K \times_K \proj_{\ones_{N_K}}^\perp }_{\tspace}^2}\cdot \Ep \brk{\exp \brc{\lambda \prn{ M_{K-1} - M_0 }}} \\
	& \leq \cdots \leq \exp \brc{\frac{\lambda^2 }{2} \cdot \prod_{l=0}^K N_l \cdot \sum_{k=1}^{K} \frac{1 + \epsilon_{N_k}}{N_k}  \cdot \norm{W_{k} \times_{k} \proj_{\ones_{N_{k}}}^\perp }_{\tspace}^2}.
\end{align*}
We can then obtain the MGF bound by identifying $\prod_{l=1}^K N_l \cdot \wb{W}\;\wb{X} = \E[\<W, X\>_{\tspace} \mid \wb{X}]$, and apply Jensen's inequality to average over the random variable $\wb{X}$. The tail bound follows from applying Lemma~\ref{lem:MGF-tail}.

\subsection{Proof of Theorem~\ref{thm:tensor-bernstein}} \label{proof:tensor-bernstein}

Similar to the proof in Appendix~\ref{proof:tensor-hoeffding}, we begin by constructing the filtration $\{\mc{F}_k\}_{k=0}^K$ for this proof as $\mc{F}_k = \sigma(\wb{X}_k, \sigma_{X,{k+1}}^2, \wt{\sigma}_{X,{k+1}}^2)$ when $k \leq K-1$ and $\mc{F}_K = \sigma(X)$. Since $\sigma_{X,k+1}^2,\wt{\sigma}_{X,k+1}^2 \in \sigma(\wb{X}_{k+1}) \subset \mc{F}_{k+1}$, we indeed have $ \mc{F}_0 \subset \cdots \subset \mc{F}_K$. We then have the Bernstein type lemma conditioned on $\mc{F}_k$.

\begin{lemma} \label{lem:tensor-bernstein-jensen}
	For any $k =0, 1, \cdots, K-1$ and the filtration $\{\mc{F}_k\}_{k=0}^K$ defined above, let $N_0=1$ and we have for $|\lambda| \leq \frac{3}{2 \linf{\wb{w}_{k+1}} (1 + \epsilon_{N_{k+1}})}$ that
	\begin{align*}
		& \Ep \brk{\exp \brc{\lambda \prn{ \<\wb{W}_{k+1}, \wb{X}_{k+1}\>_{\tspace_{k+1}} - N_{k+1} \<\wb{W}_k, \wb{X}_k\>_{\tspace_k}} } \mid \mc{F}_k} \nonumber \\
		& \leq \exp \brc{\frac{\lambda^2 (1+ \epsilon_{N_{k+1}})}{2 \prn{1 - \frac{2|\lambda|}{3} \linf{\wb{w}_{k+1}}  (1+ \epsilon_{N_{k+1}})}} \cdot  \norm{\wb{W}_{k+1} \times_{k+1} \proj_{\ones_{N_{k+1}}}^\perp }_{\tspace_{k+1}}^2 \cdot \wt{\sigma}_{X,k+1}^2}.
	\end{align*} 
\end{lemma}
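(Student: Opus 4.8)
The argument parallels the proof of Lemma~\ref{lem:tensor-hoeffding-jensen}, with the sub-Gaussian input replaced by a Bernstein-type one. There are three stages: (i) a ``plain'' conditional Bernstein MGF bound for an arbitrary fixed tensor in place of $\wb{W}_{k+1}$, in the spirit of Lemma~\ref{lem:tensor-hoeffding-plain}; (ii) the $\mathsf{A}_{N_{k+1}}$-reweighting substitution combined with mode-$(k+1)$ exchangeability; and (iii) a Jensen average over permutations of mode $k+1$, which produces the factor $1+\epsilon_{N_{k+1}}$.

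For stage (i): conditional on $\mc{F}_k=\sigma(\wb{X}_k,\sigma^2_{X,k+1},\wt{\sigma}^2_{X,k+1})$, the only remaining randomness in $\<\wb{W}_{k+1},\wb{X}_{k+1}\>_{\tspace_{k+1}}-N_{k+1}\<\wb{W}_k,\wb{X}_k\>_{\tspace_k}$ is the fluctuation of $\wb{X}_{k+1}$ about its mode-$(k+1)$ slice-mean $\wb{X}_k$. Since $X$ is mode-exchangeable and $\wb{X}_k,\sigma^2_{X,k+1},\wt{\sigma}^2_{X,k+1}$ are all invariant under permuting mode $k+1$, the mode-$(k+1)$ slices of $\wb{X}_{k+1}$ form, conditionally, an exchangeable sequence of tensors in $\tspace_k$ with slice-mean $\wb{X}_k$, subject to $\sum_{j}\norm{(\wb{X}_{k+1})_{\cdots j}-\wb{X}_k}^2_{\tspace_k}=N_{k+1}\sigma^2_{X,k+1}$ and with $\max_j\norm{(\wb{X}_{k+1})_{\cdots j}-\wb{X}_k}_\infty$ pinned down through $\wt{\sigma}^2_{X,k+1}$. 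Writing the centered contraction as $\sum_{j\in[N_{k+1}]}\big\<(\wb{U}_{k+1}\times_{k+1}\proj_{\ones_{N_{k+1}}}^\perp)_{\cdots j},\,(\wb{X}_{k+1})_{\cdots j}-\wb{X}_k\big\>_{\tspace_k}$ for a fixed $\wb{U}_{k+1}\in\tspace_{k+1}$, this is a weighted sum of exchangeable tensor-valued terms, so I would reproduce the martingale construction underlying \citet[Thm.~3.3]{foygel2024hoeffding}: reveal the slices in the canonical order, use $\mathsf{A}_n,\mathsf{B}_n$ to pass to a martingale whose per-step increments are bounded by a multiple of the relevant $\ell_1$-over-slices norm of $\wb{U}_{k+1}\times_{k+1}\proj_{\ones}^\perp$ times $\norm{\wb{X}_{k+1}\times_{k+1}\proj_{\ones}^\perp}_\infty$, and whose conditional variances sum to a quantity governed by $\sigma^2_{X,k+1}$, then apply a martingale Bernstein (Freedman) inequality. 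This yields a bound of the form
\[\exp\Big\{\tfrac12\lambda^2\cdot\textstyle\prod_{l=0}^k N_l\cdot\norm{\wb{U}_{k+1}\times_{k+1}\mathsf{A}_{N_{k+1}}}^2_{\tspace_{k+1}}\cdot\wt{\sigma}^2_{X,k+1}\,\big/\,(1-\tfrac{2|\lambda|}{3}r)\Big\},\]
where $r$ is an $\ell_1$-type range quantity and the $\epsilon_{N_{k+1}}$-term inside $\wt{\sigma}^2_{X,k+1}$ appears exactly as in \cite{foygel2024hoeffding}, from the without-replacement structure of the increments.

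For stages (ii)--(iii) the steps are verbatim those in the proof of Lemma~\ref{lem:tensor-hoeffding-jensen}: for each $\Pi_{k+1}\in\permset_{N_{k+1}}$ substitute $\wb{U}_{k+1}=\wb{W}_{k+1}\times_{k+1}(\mathsf{A}_{N_{k+1}}^\top\mathsf{A}_{N_{k+1}})^\dagger\Pi_{k+1}$ and replace $X$ by $X\times_{k+1}\Pi_{k+1}$ (legitimate since $\mc{F}_k$ is $\Pi_{k+1}$-invariant), use Lemma~\ref{lem:tensor-matrix-product-property}, $\mathsf{A}_n\ones_n=0$, $\mathsf{A}_n^\dagger=I_n-\mathsf{B}_n^\top$ and $\mathsf{A}_n^\top(I_n-\mathsf{B}_n)=\proj_{\ones_n}^\perp$ to rewrite both the exponent argument and $\norm{\wb{U}_{k+1}\times_{k+1}\mathsf{A}_{N_{k+1}}}^2$ in terms of $\wb{W}_{k+1}\times_{k+1}\Pi_{k+1}^\top(\mathsf{A}_{N_{k+1}}^\top\mathsf{A}_{N_{k+1}})^\dagger\Pi_{k+1}$, then average over $\Pi_{k+1}$ by Jensen. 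Lemma~\ref{lem:perm-avg-matrix} with $\Tr((\mathsf{A}_{N_{k+1}}^\top\mathsf{A}_{N_{k+1}})^\dagger)=N_{k+1}-H_{N_{k+1}}$ and the identity $1+\epsilon_n=\frac{n-1}{n-H_n}$ collapses the averaged matrix to $\frac{1}{1+\epsilon_{N_{k+1}}}\proj_{\ones_{N_{k+1}}}^\perp$; rescaling $\lambda\mapsto\lambda(1+\epsilon_{N_{k+1}})$ then converts the numerator into $\tfrac12\lambda^2(1+\epsilon_{N_{k+1}})\norm{\wb{W}_{k+1}\times_{k+1}\proj_{\ones_{N_{k+1}}}^\perp}^2_{\tspace_{k+1}}\wt{\sigma}^2_{X,k+1}$, with the $\prod_{l=0}^k N_l$ factors absorbed into the normalizations hidden in $\wt{\sigma}^2_{X,k+1}$ and $\norm{\cdot}_{\tspace_{k+1}}$, as claimed.

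The main obstacle I expect is the Bernstein denominator. Unlike the Frobenius/variance term $\norm{\wb{U}_{k+1}\times_{k+1}\mathsf{A}_{N_{k+1}}}^2$, which is handled exactly by Lemma~\ref{lem:perm-avg-matrix}, the linear term in the bound requires an $\ell_1$-over-slices range estimate (i.e.\ one producing $\linf{\wb{w}_{k+1}}$), and such an $\ell_1$-type quantity does not pass cleanly through the reweighting by $(\mathsf{A}_{N_{k+1}}^\top\mathsf{A}_{N_{k+1}})^\dagger$ nor through the permutation average. The crux is therefore to obtain, uniformly over $\Pi_{k+1}$, a range bound of the form $\linf{\wb{w}_{k+1}}(1+\epsilon_{N_{k+1}})$, which forces use of the explicit structure of $\mathsf{A}_n$ and $\mathsf{B}_n$ (the row sums of $\mathsf{B}_n$, the identity $\mathsf{A}_n^\top(I_n-\mathsf{B}_n)=\proj_{\ones_n}^\perp$, and $\mathsf{A}_n^\dagger=I_n-\mathsf{B}_n^\top$), exactly as \citet{foygel2024hoeffding} control the linear term in the scalar exchangeable Bernstein bound; reconciling this $\ell_1$-type control with the $\ell_2$-type output of the permutation averaging is the delicate bookkeeping step.
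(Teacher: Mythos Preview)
Your stages (ii)--(iii) follow the Hoeffding template too closely, and the obstacle you flag at the end is real: if you substitute $\wb{U}_{k+1}=\wb{W}_{k+1}\times_{k+1}(\mathsf{A}_{N_{k+1}}^\top\mathsf{A}_{N_{k+1}})^\dagger\Pi_{k+1}$, the per-step range (which enters the Bernstein denominator) is an $\ell_1$-over-slices norm of $\wb{U}_{k+1}$, and this quantity is \emph{not} uniform in $\Pi_{k+1}$ nor controlled by $\linf{\wb w_{k+1}}(1+\epsilon_{N_{k+1}})$ in any direct way. The paper does not try to push the range through $(\mathsf{A}^\top\mathsf{A})^\dagger$.

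Instead, the paper's proof makes a different substitution: it sets $\wb V_{k+1}^{\pi}=\wb W_{k+1}\times_{k+1}\proj_{\ones_{N_{k+1}}}^\perp\times_{k+1}\Pi_{k+1}$, i.e.\ it permutes the \emph{already-centered} weight directly, with no $\mathsf{A}$ or pseudoinverse. Then the $i$-th martingale increment $Z_i^{\pi}$ satisfies $|Z_i^\pi|\le\sum_{i_1^k}|(\wb W_{k+1}\times_{k+1}\proj_{\ones}^\perp)_{i_1^k\,\pi(i)}|\le\linf{\wb w_{k+1}}$ \emph{uniformly in $\pi$}, so the denominator is clean from the start. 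The telescoping identity~\eqref{eq:telescoping-rhs} turns the left-hand side argument into $\<\wb W_{k+1}\times_{k+1}\proj_{\ones}^\perp\times_{k+1}\Pi^\top(I-\mathsf{B}_{N_{k+1}})^\top\Pi,\,\wb X_{k+1}\>$, while the telescoped variance term becomes $\wt W_{k+1}^\top\Pi^\top\mathsf{B}_{N_{k+1}}\Pi\,\wt X_{k+1}$, where $\wt W_{k+1},\wt X_{k+1}\in\R^{N_{k+1}}$ collect the squared slice-norms of $\wb W_{k+1}\times_{k+1}\proj_{\ones}^\perp$ and of $\wb X_{k+1}\times_{k+1}\proj_{\ones}^\perp$ respectively. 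Jensen over $\Pi$ plus Lemma~\ref{lem:perm-avg-matrix} applied to $I-\mathsf{B}_{N_{k+1}}$ and to $\mathsf{B}_{N_{k+1}}$ gives $\frac{1}{1+\epsilon_{N_{k+1}}}\proj_{\ones}^\perp$ on the LHS and $\frac{1}{1+\epsilon_{N_{k+1}}}\proj_{\ones}+\frac{\epsilon_{N_{k+1}}}{1+\epsilon_{N_{k+1}}}I$ on the variance side; it is this second average, applied to $\wt X_{k+1}$, that produces exactly $\frac{1}{1+\epsilon_{N_{k+1}}}\wt\sigma^2_{X,k+1}$. The final $\lambda\mapsto\lambda(1+\epsilon_{N_{k+1}})$ rescaling then balances the two $\frac{1}{1+\epsilon}$ factors and puts the $(1+\epsilon_{N_{k+1}})$ into both the numerator and the range in the denominator.

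In short: drop the $(\mathsf{A}^\top\mathsf{A})^\dagger$ reweighting for Bernstein; the variance telescopes through $\mathsf{B}_{N_{k+1}}$, not through $\mathsf{A}_{N_{k+1}}^\top\mathsf{A}_{N_{k+1}}$, and the permutation average of $\mathsf{B}_{N_{k+1}}$ is what generates $\wt\sigma^2_{X,k+1}$.
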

A proof is in Appendix~\ref{proof:tensor-bernstein-jensen}. Applying the preceding lemma $K$ times yields
\begin{align*}
\Ep \brk{\exp \brc{\lambda \prn{ \<W, X\>_{\tspace} - \prod_{l=1}^K N_l \cdot \wb{W}\;\wb{X} }} \mid \mc{F}_0} \leq \exp \brc{\frac{\lambda^2}{2} \cdot \sum_{k=1}^K\frac{\prn{1+ \epsilon_{N_{k}}} \cdot \prod_{l=k+1}^K N_l \cdot \norm{W_{k} \times_{k} \proj_{\ones_{N_{k}}}^\perp }_{\tspace}^2 \cdot \wt{\sigma}_{X,k}^2  }{1 - \frac{2|\lambda|}{3} \linf{\wb{w}_{k}}  (1+ \epsilon_{N_{k}})} } .
\end{align*}
The proof for the MGF bound is done taking conditional expectation w.r.t. $\wt{\mc{F}}_{X}$ on both sides, using $\wt{\sigma}_{X, 1}^2, \cdots, \wt{\sigma}_{X, K}^2 \in \wt{\mc{F}}_X \subset \mc{F}_0$. Further from
\begin{align*}
	\frac{\lambda^2}{2} \cdot \sum_{k=1}^K\frac{\prn{1+ \epsilon_{N_{k}}} \cdot \prod_{l=k+1}^K N_l \cdot \norm{W_{k} \times_{k} \proj_{\ones_{N_{k}}}^\perp }_{\tspace}^2 \cdot \wt{\sigma}_{X,k}^2  }{1 - \frac{2|\lambda|}{3} \linf{\wb{w}_{k}}  (1+ \epsilon_{N_{k}})} \leq \frac{\lambda^2 \sum_{k=1}^K \prn{1+ \epsilon_{N_{k}}} \cdot \prod_{l=k+1}^K N_l \cdot \norm{W_{k} \times_{k} \proj_{\ones_{N_{k}}}^\perp }_{\tspace}^2 \cdot \wt{\sigma}_{X,k}^2 }{2 \prn{1 - \frac{2}{3} \max_{k \in [K]} \linf{\wb{w}_{k}}  (1+ \epsilon_{N_{k}}) \cdot |\lambda| }},
\end{align*}
the tail bound follows from applying Lemma~\ref{lem:MGF-tail}.

\subsection{Proof of Lemma~\ref{lem:tensor-hoeffding-plain}} \label{proof:tensor-hoeffding-plain}
We first construct a finer filtration $\{\mc{F}_{k,i}\}_{i=0}^{N_{k+1}}$ bridging from $\mc{F}_{k,0} = \sigma(\wb{X}_k) = \mc{F}_k$ to $\mc{F}_{k, N_{k+1}} = \sigma(\wb{X}_{k+1}) = \mc{F}_{k+1}$,
where for $i = 1, 2, \cdots, N_{k+1}$,
\begin{align*}
	\mc{F}_{k,i} = \sigma(\mc{F}_{k,i-1}, \{(\wb{X}_{k+1})_{i_1\cdots i_k i}\}_{i_l \in [N_l], l \in [k]}) = \sigma(\wb{X}_k, \{(\wb{X}_{k+1})_{i_1\cdots i_k i_{k+1}}\}_{i_l \in [N_l], l \in [k], i_{k+1} \in [i]}).
\end{align*} 
For any fixed $i_l \in [N_l], l \in [k]$, we use the shorthand $i_1^k$ for the index group $i_1\cdots i_k$ and denote by $(\wb{X}_{k+1})_{i_1^k, \geq i} = \frac{1}{N_{k+1} + 1 - i} \sum_{i_{k+1}=i}^{N_{k+1}} (\wb{X}_{k+1})_{i_1^k i_{k+1}}$. It is clear that $(\wb{X}_{k+1})_{i_1^k, \geq i} = \frac{1}{N_{k+1} + 1 - i} (N_{k+1} (\wb{X}_k)_{i_1^k}-\sum_{i_{k+1}=1}^{i-1} (\wb{X}_{k+1})_{i_1^k i_{k+1}} \in \mc{F}_{k,i-1}$, and by mode-exchangeability along the $(k+1)$-mode,
\begin{align*}
	\mathbb{E} \brk{(\wb{X}_{k+1})_{i_1^k i} - (\wb{X}_{k+1})_{i_1^k, \geq i}  \mid \mc{F}_{k, i-1}} = 0.
\end{align*}
This allows us to apply the standard Hoeffding's lemma~(e.g., see~\cite[Prop.~2.5]{wainwright2019high}) to a fixed $\wb{V} \in \tspace_{k+1}$,
\begin{align*}
	& \mathbb{E} \brk{\exp \brc{\lambda \sum_{i_1^k \in [N_1] \times \cdots \times [N_k]} \wb{V}_{i_1^k i} \cdot \prn{ (\wb{X}_{k+1})_{i_1^k i} - (\wb{X}_{k+1})_{i_1^k, \geq i} } } \mid \mc{F}_{k, i-1}} \\
	& \leq \exp \brc{\frac{\lambda^2}{2} \cdot \prn{\sum_{i_1^k \in [N_1] \times \cdots \times [N_k]} \left|\wb{V}_{i_1^k i}\right|}^2} \leq \exp \brc{\frac{\lambda^2}{2} \cdot \prod_{l=0}^k N_l \cdot \sum_{i_1^k \in [N_1] \times \cdots \times [N_k]} \wb{V}_{i_1^k i}^2},
\end{align*}
where the last inequality is a consequence of Cauchy-Schwartz. By telescoping products and sequentially conditioning on $\mc{F}_{k, i-1}$, it then follows
\begin{align*}
	& \mathbb{E} \brk{\exp \brc{\lambda \sum_{i_1^k \in [N_1] \times \cdots \times [N_k]} \sum_{i=1}^{N_{k+1}} \wb{V}_{i_1^k i} \cdot \prn{ (\wb{X}_{k+1})_{i_1^k i} - (\wb{X}_{k+1})_{i_1^k, \geq i} } } \mid \mc{F}_{k}}  \leq \exp \brc{\frac{\lambda^2}{2} \cdot \prod_{l=0}^k N_l \cdot \norm{\wb{V}}_{\tspace_{k+1}}^2}.
\end{align*}
For a tensor $A \in \tspace_{k+1}$ and the first $k$ indices $i_1^k$, we represent $A_{i_1^k} := (A_{i_1^k 1}, \cdots, A_{i_1^k N_{k+1}})^\top \in \R^{N_{k+1}}$, which allows us to write 
\begin{align}
	& \sum_{i_1^k \in [N_1] \times \cdots \times [N_k]} \sum_{i=1}^{N_{k+1}} \wb{V}_{i_1^k i} \cdot \prn{ (\wb{X}_{k+1})_{i_1^k i} - (\wb{X}_{k+1})_{i_1^k, \geq i}} = 	\sum_{i_1^k \in [N_1] \times \cdots \times [N_k]} \wb{V}_{i_1^k}^\top (I_{N_{k+1}}  - \mathsf{B}_{N_{k+1}} ) (\wb{X}_{k+1})_{i_1^k} \nonumber \\
	& = \< \wb{V} , \wb{X}_{k+1} \times_{k+1} (I_{N_{k+1}} - \mathsf{B}_{N_{k+1}} ) \>_{\tspace_{k+1}} = \< \wb{V} \times_{k+1} (I_{N_{k+1}}  - \mathsf{B}_{N_{k+1}} )^\top , \wb{X}_{k+1}  \>_{\tspace_{k+1}}. \label{eq:telescoping-rhs}
\end{align}
Finally, we substitute $\wb{V} = \wb{U}_{k+1} \times_{k+1} \mathsf{A}_{N_{k+1}} $ and use $\wb{U}_{k+1} \times_{k+1} \mathsf{A}_{N_{k+1}}  \times_{k+1} (I_{N_{k+1}}  - \mathsf{B}_{N_{k+1}} )^\top = \wb{U}_{k+1} \times_{k+1} (I_{N_{k+1}}  - \mathsf{B}_{N_{k+1}} )^\top\mathsf{A}_{N_{k+1}}  = \wb{U}_{k+1} \times_{k+1} \proj_{\ones_{N_{k+1}}}^\perp$,
we obtain
\begin{align*}
	& \< \wb{V} \times_{k+1} (I_{N_{k+1}}  - \mathsf{B}_{N_{k+1}} )^\top , \wb{X}_{k+1}  \>_{\tspace_{k+1}}  = 	\< \wb{U}_{k+1} \times_{k+1} \proj_{\ones_{N_{k+1}}}^\perp , \wb{X}_{k+1}  \>_{\tspace_{k+1}} \\
	& = 	\< \wb{U}_{k+1}, \wb{X}_{k+1}  \times_{k+1} \proj_{\ones_{N_{k+1}}}^\perp  \>_{\tspace_{k+1}} = 	\< \wb{U}_{k+1}, \wb{X}_{k+1}  - \wb{X}_k \otimes \frac{1}{N_{k+1}} \ones_{N_{k+1}}\>_{\tspace_{k+1}} \nonumber \\
	& = \< \wb{U}_{k+1}, \wb{X}_{k+1}\>_{\tspace_{k+1}} - N_{k+1} \< \wb{U}_k \otimes \frac{1}{N_{k+1}}  \ones_{N_{k+1}}, \wb{X}_k \otimes \frac{1}{N_{k+1}} \ones_{N_{k+1}}\>_{\tspace_{k+1}} \nonumber \\
	& = \< \wb{U}_{k+1}, \wb{X}_{k+1}\>_{\tspace_{k+1}} - N_{k+1}  \< \wb{U}_k, \wb{X}_k\>_{\tspace_k}.
\end{align*}
We then conclude the proof with the above displays
\begin{align*}
	& \Ep \brk{\exp \brc{\lambda \prn{ \<\wb{U}_{k+1}, \wb{X}_{k+1}\>_{\tspace_{k+1}} - N_{k+1}  \<\wb{U}_k, \wb{X}_k\>_{\tspace_k}} } \mid \mc{F}_k} \nonumber \\
	& = \mathbb{E} \brk{\exp \brc{\lambda \sum_{i_1^k \in [N_1] \times \cdots \times [N_k]} \sum_{i=1}^{N_{k+1}} \wb{V}_{i_1^k i} \cdot \prn{ (\wb{X}_{k+1})_{i_1^k i} - (\wb{X}_{k+1})_{i_1^k, \geq i} } } \mid \mc{F}_{k}} \leq \exp \brc{\frac{\lambda^2}{2} \cdot \prod_{l=0}^k N_l \cdot \norm{\wb{V}}_{\tspace_{k+1}}^2} \nonumber \\
	& = \exp \brc{\frac{\lambda^2}{2} \cdot \prod_{l=0}^k N_l \cdot \norm{\wb{U}_{k+1} \times_{k+1} \mathsf{A}_{N_{k+1}}}_{\tspace_{k+1}}^2} 
\end{align*}

\subsection{Proof of Lemma~\ref{lem:tensor-bernstein-jensen}} \label{proof:tensor-bernstein-jensen}
Similarly to the proof in Appendix~\ref{proof:tensor-hoeffding-plain}, for $i = 1, 2, \cdots, N_{k+1}$, we let
\begin{align*}
	\mc{F}_{k,i} & = \sigma(\mc{F}_{k,i-1}, \{(\wb{X}_{k+1})_{i_1\cdots i_k i}\}_{i_l \in [N_l], l \in [k]}) \nonumber \\
	& = \sigma(\wb{X}_k, \{\sigma^2_{X, l}, \wt{\sigma}_{X, l}^2\}_{l \in [k+1]}, \{(\wb{X}_{k+1})_{i_1\cdots i_k i_{k+1}}\}_{i_l \in [N_l], l \in [k], i_{k+1} \in [i]}).
\end{align*} 
For any permutation $\pi_{k+1} \in \permset_{N_{k+1}}$ and its associated permutation matrix $\Pi_{k+1} \in \permmapset_{N_{k+1}}$, we define
\begin{align*}
	\wb{V}_{k+1}^{\pi_{k+1}} := \wb{W}_{k+1} \times_{k+1} \proj_{\ones_{N_{k+1}}}^\perp \times_{k+1} \Pi_{k+1},
\end{align*}
and consider the random variables $Z_i^{\pi_{k+1}} := \sum_{i_1^k \in [N_1] \times \cdots \times [N_k]} \wb{V}_{i_1^k i} \cdot \prn{ (\wb{X}_{k+1})_{i_1^k i} - (\wb{X}_{k+1})_{i_1^k, \geq i} } $. From the following bound
\begin{align*}
	\left| Z_i^{\pi_{k+1}} \right|  & \leq  \sum_{i_1^k \in [N_1] \times \cdots \times [N_k]} \left|\wb{V}_{i_1^k i}\right| =\sum_{i_1^k \in [N_1] \times \cdots \times [N_k]} \left|\prn{\wb{W}_{k+1} \times \proj_{\ones_{N_{k+1}}}^\perp}_{i_1^k {\pi_{k+1}}(i)}\right| \leq \linf{\wb{w}_{k+1}}, 
\end{align*}
and the fact that conditioned on $\mc{F}_{k, i-1}$, by mode-exchangeability,
\begin{align*}
	\mathbb{E} \brk{Z_i^{\pi_{k+1}} \mid \mc{F}_{k,i-1}} & = 0, \\
	\var \prn{Z_i^{\pi_{k+1}} \mid \mc{F}_{k,i-1}}  & \leq \prn{\sum_{i_1^k \in [N_1] \times \cdots \times [N_k]} \wb{V}_{i_1^k i}^2 } \cdot \frac{1}{N_{k+1}+1-i} \sum_{l=i}^{N_{k+1}} \prn{\sum_{i_1^k \in [N_1] \times \cdots \times [N_k]} \prn{ (\wb{X}_{k+1})_{i_1^k l} - (\wb{X}_{k+1})_{i_1^k, \geq i} }^2} \\
	& \leq \prn{\sum_{i_1^k \in [N_1] \times \cdots \times [N_k]} \wb{V}_{i_1^k i}^2 } \cdot \frac{1}{N_{k+1}+1-i} \sum_{l=i}^{N_{k+1}} \underbrace{\prn{\sum_{i_1^k \in [N_1] \times \cdots \times [N_k]} \prn{ (\wb{X}_{k+1})_{i_1^k l} - (\wb{X}_{k})_{i_1^k} }^2}}_{=: \wt{X}_{k+1,l}} \\
	& = \underbrace{\prn{\sum_{i_1^k \in [N_1] \times \cdots \times [N_k]} \prn{\wb{W}_{k+1} \times \proj_{\ones_{N_{k+1}}}^\perp}_{i_1^k {\pi_{k+1}}(i)}^2 }}_{=:\wt{W}_{k+1, {\pi_{k+1}}(i)}} \cdot \frac{1}{N_{k+1}+1-i} \sum_{l=i}^{N_{k+1}} \wt{X}_{k+1,l},
\end{align*}
we can conclude from the Bernstein's lemma~(e.g., see~\cite[Prop.~2.10]{wainwright2019high}) that for $|\lambda| \leq \frac{3}{2 \linf{\wb{w}_{k+1}}}$,
\begin{align*}
	& \mathbb{E} \brk{\exp \brc{\lambda \sum_{i_1^k \in [N_1] \times \cdots \times [N_k]} \wb{V}_{i_1^k i} \cdot \prn{ (\wb{X}_{k+1})_{i_1^k i} - (\wb{X}_{k+1})_{i_1^k, \geq i} } } \mid \mc{F}_{k, i-1}} \\
	& \leq \exp \brc{\frac{\lambda^2 }{2 \prn{1 - \frac{2|\lambda|}{3} \linf{\wb{w}_{k+1}}  }} \cdot \wt{W}_{k+1, {\pi_{k+1}}(i)} \cdot \frac{1}{N_{k+1}+1-i} \sum_{l=i}^{N_{k+1}} \wt{X}_{k+1,l}}.
\end{align*}
Denoting by $\wt{W}_{k+1}$ and $\wt{X}_{k+1}$ the vectors $(\wt{W}_{k+1, 1}, \cdots, \wt{W}_{k+1, N_{k+1}})^\top$ and $(\wt{X}_{k+1, 1}, \cdots, \wt{X}_{k+1, N_{k+1}})^\top$ in $\R^{N_{k+1}}$, the above display has the compact form of
\begin{align*}
	& \mathbb{E} \brk{\exp \brc{\lambda \sum_{i_1^k \in [N_1] \times \cdots \times [N_k]} \wb{V}_{i_1^k i} \cdot \prn{ (\wb{X}_{k+1})_{i_1^k i} - (\wb{X}_{k+1})_{i_1^k, \geq i} } } \mid \mc{F}_{k, i-1}} \\
	& \leq \exp \brc{\frac{\lambda^2 }{2 \prn{1 - \frac{2|\lambda|}{3} \linf{\wb{w}_{k+1}}  }} \cdot (\Pi_{k+1} \wt{W}_{k+1})^\top e_i e_i^\top \mathsf{B}_{N_{k+1}} \wt{X}_{k+1}}.
\end{align*}
By telescoping products and identifying Eq.~\eqref{eq:telescoping-rhs}, it holds
\begin{align*}
	& \mathbb{E} \brk{\exp \brc{\lambda \< \wb{W}_{k+1} \times_{k+1} \proj_{\ones_{N_{k+1}}}^\perp \times_{k+1} (I_{N_{k+1}}  - \mathsf{B}_{N_{k+1}} )^\top \Pi_{k+1}, \wb{X}_{k+1}  \>_{\tspace_{k+1}}} \mid \mc{F}_k} \\
	& \leq \exp \brc{\frac{\lambda^2 }{2 \prn{1 - \frac{2|\lambda|}{3} \linf{\wb{w}_{k+1}}  }} \cdot \sum_{i \in [N_{k+1}]} (\Pi_{k+1} \wt{W}_{k+1})^\top e_i e_i^\top \mathsf{B}_{N_{k+1}} \wt{X}_{k+1}} \\
	& = \exp \brc{\frac{\lambda^2 }{2 \prn{1 - \frac{2|\lambda|}{3} \linf{\wb{w}_{k+1}}  }} \cdot  \wt{W}_{k+1}^\top \Pi_{k+1}^\top \mathsf{B}_{N_{k+1}} \wt{X}_{k+1}}.
\end{align*}
We can again apply Jensen's inequality from $(\wb{X}_{k+1}, \wt{X}_{k+1}) \eqdst (\wb{X}_{k+1} \times_{k+1} \Pi_{k+1}, \Pi_{k+1}\wt{X}_{k+1})$ to obtain
\begin{align*}
	& \mathbb{E} \brk{\exp \brc{\lambda \< \wb{W}_{k+1} \times_{k+1} \proj_{\ones_{N_{k+1}}}^\perp \times_{k+1} \frac{1}{N_{k+1}!} \sum_{\Pi_{k+1} \in \permset_{N_{k+1}}} \Pi_{k+1}^\top (I_{N_{k+1}}  - \mathsf{B}_{N_{k+1}} )^\top \Pi_{k+1}, \wb{X}_{k+1}  \>_{\tspace_{k+1}}} \mid \mc{F}_k} \\
	& = \exp \brc{\frac{\lambda^2 }{2 \prn{1 - \frac{2|\lambda|}{3} \linf{\wb{w}_{k+1}}  }} \cdot  \wt{W}_{k+1}^\top \frac{1}{N_{k+1}!} \sum_{\Pi_{k+1} \in \permset_{N_{k+1}}} \Pi_{k+1}^\top \mathsf{B}_{N_{k+1}} \Pi_{k+1} \wt{X}_{k+1}}.
\end{align*}
Given $(I_{N_{k+1}}  - \mathsf{B}_{N_{k+1}} ) \ones_{N_{k+1}} = 0$, we refer to Lemma~\ref{lem:perm-avg-matrix} and compute
\begin{align*}
	 &\frac{1}{N_{k+1}!} \sum_{\Pi_{k+1} \in \permset_{N_{k+1}}} \Pi_{k+1}^\top (I_{N_{k+1}}  - \mathsf{B}_{N_{k+1}} )^\top \Pi_{k+1} = \frac{N_{k+1} - \Tr(B_{N_{k+1}})}{N_{k+1} - 1} \cdot \proj_{\ones_{N_{k+1}}}^\perp = \frac{1}{1 + \epsilon_{N_{k+1}}} \cdot \proj_{\ones_{N_{k+1}}}^\perp , \\
	&\frac{1}{N_{k+1}!} \sum_{\Pi_{k+1} \in \permset_{N_{k+1}}} \Pi_{k+1}^\top \mathsf{B}_{N_{k+1}} \Pi_{k+1}  = I - \frac{1}{1 + \epsilon_{N_{k+1}}} \cdot \proj_{\ones_{N_{k+1}}}^\perp = \frac{1}{1 + \epsilon_{N_{k+1}}} \cdot \proj_{\ones_{N_{k+1}}} + \frac{\epsilon_{N_{k+1}}}{1 + \epsilon_{N_{k+1}}} \cdot I_{N_{k+1}}.
\end{align*}
Therefore, the right hand side has the upper bound
\begin{align*}
	& \wt{W}_{k+1}^\top \frac{1}{N_{k+1}!} \sum_{\Pi_{k+1} \in \permset_{N_{k+1}}} \Pi_{k+1}^\top \mathsf{B}_{N_{k+1}} \Pi_{k+1} \wt{X}_{k+1} \\
	& \leq \frac{1}{1 + \epsilon_{N_{k+1}}} \cdot \wt{W}_{k+1}^\top \ones_{N_{k+1}} \cdot \frac{1}{N_{k+1}} \ones_{N_{k+1}}^\top \wt{X}_{k+1} + \frac{\epsilon_{N_{k+1}}}{1 + \epsilon_{N_{k+1}}} \wt{W}_{k+1}^\top \wt{X}_{k+1}  \\
	& \leq \frac{1}{1 + \epsilon_{N_{k+1}}} \cdot \lone{\wt{W}_{k+1}} \cdot \prn{\frac{1}{N_{k+1}} \ones_{N_{k+1}}^\top \wt{X}_{k+1} + \epsilon_{N_{k+1}} \linf{\wt{X}_{k+1}}} \\
	& \leq \frac{1}{1 + \epsilon_{N_{k+1}}} \cdot \norm{\wb{W}_{k+1} \times_{k+1} \proj_{\ones_{N_{k+1}}}^\perp }_{\tspace_{k+1}}^2 \cdot \wt{\sigma}_{X, k+1}^2,
\end{align*}
where in the last line we use $\lone{\wt{W}_{k+1}} = \norm{\wb{W}_{k+1} \times_{k+1} \proj_{\ones_{N_{k+1}}}^\perp }_{\tspace_{k+1}}^2$ and the Cauchy-Schwartz inequality $\linf{\wt{X}_{k+1}} \leq \prod_{l=0}^k N_l \cdot \norm{\wb{X}_{k+1} \times_{k+1} \proj_{\ones_{N_{k+1}}}^\perp}_{\infty}^2$. The proof is complete by reparameterization of $\lambda$ to $\lambda \cdot (1 + \epsilon_{N_{k+1}})$.

\section{Proofs for Section~\ref{sec:master-matrix-valued-bounds}}

\subsection{Proof of Theorem~\ref{thm:matrix-valued-hoeffding}} \label{proof:matrix-valued-hoeffding}
In this proof, we use the filtration $\{\mc{F}_k\}_{k=0}^N$ where $\mc{F}_0 = \sigma (\wb{X})$ and $\mc{F}_{k} = \sigma( \mc{F}_{k-1}, X_k)$. If Assumption~\ref{assmp:commutativity} holds, we also have $\mc{F}_k = \sigma(\wb{X}, \wb{\wt{X}}, \{X_i, \wt{X}_i\}_{i=1}^k)$. We first introduce a lemma for the symmetrized matrix cumulant generating function (CGF) by Rademacher. See \cite[Lemma 4.3]{tropp2012user} for its proof.
\begin{lemma}[Hoeffding CGF bound] \label{lem:rademacher-cgf-bound}
	For a fixed symmetric matrix $A \in \mathbb{S}^n$, let $\epsilon$ be a Rademacher. We have $$\log \E \exp(\epsilon A) \preceq \frac{A^2}{2}.$$
	Here $\log$ is the functional inverse on the positive definite cone $\mathbb{S}_{++}^n$ of $\exp$ on $\mathbb{S}^n$.
\end{lemma}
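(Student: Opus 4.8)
The plan is to compute the matrix MGF explicitly and then reduce the operator inequality to a purely scalar one via the spectral calculus. Since the Rademacher variable $\epsilon$ takes the values $\pm 1$ with equal probability, we have directly
\[
\E\exp(\epsilon A) = \tfrac{1}{2}\exp(A) + \tfrac{1}{2}\exp(-A) = \cosh(A),
\]
where $\cosh$ denotes the matrix hyperbolic cosine defined through its everywhere-convergent power series $\cosh(A) = \sum_{j\geq 0} A^{2j}/(2j)!$. Because $A \in \mathbb{S}^n$ is symmetric, $\cosh(A) \succeq I_n \succ 0$, so its matrix logarithm is well-defined on $\mathbb{S}_{++}^n$, and the claim reduces to establishing the operator inequality $\log\cosh(A) \preceq A^2/2$.

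Next I would invoke the transfer rule for functions of a symmetric matrix. Writing the eigendecomposition $A = Q\Lambda Q^\top$ with $\Lambda = \diag(\lambda_1,\dots,\lambda_n)$, all the relevant matrices share the eigenbasis $Q$: we have $\log\cosh(A) = Q\,\diag(\log\cosh(\lambda_i))\,Q^\top$ and $A^2/2 = Q\,\diag(\lambda_i^2/2)\,Q^\top$. Hence the desired Loewner inequality holds if and only if the corresponding scalar inequality $\log\cosh(\lambda) \leq \lambda^2/2$ holds at each eigenvalue $\lambda = \lambda_i$, since both sides are simultaneously diagonalized by $Q$ and the Loewner order between simultaneously diagonalizable matrices is just the entrywise order on their common eigenvalues.

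It then remains to verify the scalar bound $\log\cosh(\lambda) \leq \lambda^2/2$ for every $\lambda \in \R$, equivalently $\cosh(\lambda) \leq \exp(\lambda^2/2)$. I would do this by a term-by-term comparison of the two power series $\cosh(\lambda) = \sum_{j\geq 0} \lambda^{2j}/(2j)!$ and $\exp(\lambda^2/2) = \sum_{j\geq 0} \lambda^{2j}/(2^j\, j!)$, using the elementary factorial inequality $(2j)! \geq 2^j\, j!$ (the right-hand side being the product of the even integers $2\cdot 4\cdots(2j)$, a subset of the factors appearing in $(2j)!$). Since $\lambda^{2j} \geq 0$ and every coefficient of $\cosh(\lambda)$ is dominated by the matching coefficient of $\exp(\lambda^2/2)$, the inequality follows coefficientwise and hence pointwise.

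The only subtlety here—more a bookkeeping point than a genuine obstacle—is justifying the passage from the scalar inequality to the matrix Loewner inequality: one must note that applying a scalar function to a symmetric matrix through its spectral decomposition preserves the order $\preceq$ whenever the scalar inequality holds at every eigenvalue, and that $\log$ is a legitimate matrix function at $\cosh(A)$ precisely because the latter lies in $\mathbb{S}_{++}^n$. No operator-convexity or operator-monotonicity of $\log$ is needed, since both sides of the inequality are simultaneously diagonalizable; the entire argument is spectral.
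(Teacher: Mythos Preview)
Your proof is correct and follows exactly the standard argument: compute $\E\exp(\epsilon A)=\cosh(A)$, then transfer the scalar inequality $\cosh(\lambda)\le e^{\lambda^2/2}$ through the spectral decomposition of $A$. The paper does not prove this lemma itself but cites \cite[Lemma~4.3]{tropp2012user}, whose proof is precisely the one you have written.
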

\paragraph{Step 1: MGF bounds for $\{X_k - X_{\geq k}\}_{k=1}^N$ by the martingale argument} For any $U \in \R^{N \times p \times q}$, $k \in [N]$ and $H_{k-1} \in \mathbb{S}^{p+r}$ such that $H_{k-1} \in \mc{F}_{k-1}$ is adaptable, let $X_{\geq k} := \frac{1}{N+1-k} \sum_{i=k}^N X_i$. By $\E \brk{X_k - X_{\geq k} \mid \mc{F}_{k-1}}= 0$ and the symmetrization lemma (cf.~Lemma~\ref{lem:convex-concave-exp}), 
\begin{align*}
	\mathsf{M}_k(H_{k-1}) &  : = \E \brk{\Tr \exp \brc{H_{k-1} + \lambda \begin{bmatrix} 0 & U_k(X_k - X_{\geq k}) \\ (X_k - X_{\geq k})^\top U_k^\top & 0\end{bmatrix}} \mid \mc{F}_{k-1}} \nonumber \\
	& \leq  \E \brk{\Tr \exp \brc{H_{k-1} + \epsilon \cdot 2\lambda \begin{bmatrix} 0 & U_k(X_k - X_{\geq k}) \\ (X_k - X_{\geq k})^\top U_k^\top & 0\end{bmatrix}} \mid \mc{F}_{k-1}}.
\end{align*}
Using Lieb's inequality and Lemma~\ref{lem:rademacher-cgf-bound}, it further holds that
\begin{align*}
	& \E \brk{\Tr \exp \brc{H_{k-1} + \lambda \begin{bmatrix} 0 & U_k(X_k - X_{\geq k}) \\ (X_k - X_{\geq k})^\top U_k^\top & 0\end{bmatrix}} \mid \mc{F}_{k-1}}  \nonumber \\
	& \leq  \E \brk{\E \brk{\Tr \exp \brc{H_{k-1} + \epsilon \cdot 2\lambda \begin{bmatrix} 0 & U_k(X_k - X_{\geq k}) \\ (X_k - X_{\geq k})^\top U_k^\top & 0\end{bmatrix}} \mid \mc{F}_N} \mid \mc{F}_{k-1}} \nonumber \\
	& \leq \E \brk{\Tr \exp \brc{H_{k-1} + \log \E \brk{ \exp \brc{\epsilon \cdot 2\lambda \begin{bmatrix} 0 & U_k(X_k - X_{\geq k}) \\ (X_k - X_{\geq k})^\top U_k^\top & 0\end{bmatrix}} \mid \mc{F}_N}} \mid \mc{F}_{k-1}} \nonumber \\
	& \leq  \E \brk{\Tr \exp \brc{H_{k-1} +  2\lambda^2 \begin{bmatrix} U_k(X_k - X_{\geq k})(X_k - X_{\geq k})^\top U_k^\top & 0 \\ 0 & (X_k - X_{\geq k})^\top U_k^\top U_k (X_k - X_{\geq k}) \end{bmatrix}}  \mid \mc{F}_{k-1}},
\end{align*}
where in the last inequality we make use of Lemma~\ref{lem:rademacher-cgf-bound} and the monotonicity of $\Tr \exp$. From $\norm{X_k} \leq 1$, it holds
\begin{align*}
	& \begin{bmatrix} U_k(X_k - X_{\geq k})(X_k - X_{\geq k})^\top U_k^\top & 0 \\ 0 & (X_k - X_{\geq k})^\top U_k^\top U_k (X_k - X_{\geq k}) \end{bmatrix} \nonumber \\
	& \preceq \begin{bmatrix} 4U_k U_k^\top & 0 \\ 0 &(X_k - X_{\geq k})^\top \cdot \norm{U_k^\top U_k} I_q \cdot (X_k - X_{\geq k}) \end{bmatrix} \preceq \begin{bmatrix} 4U_k U_k^\top & 0 \\ 0 & 4\norm{U_k^\top U_k} I_r \end{bmatrix} \preceq 4 \norm{U_k}^2 I_{p+r},
\end{align*}
and additionally given Assumption~\ref{assmp:commutativity} holds for $(U, X)$ with its commutativity pair $(\wt{X}, Z)$,
\begin{align*}
	& \begin{bmatrix} U_k(X_k - X_{\geq k})(X_k - X_{\geq k})^\top U_k^\top & 0 \\ 0 & (X_k - X_{\geq k})^\top U_k^\top U_k (X_k - X_{\geq k}) \end{bmatrix} \nonumber \\
	& = \begin{bmatrix} U_k(X_k - X_{\geq k})(X_k - X_{\geq k})^\top U_k^\top & 0 \\ 0 & Z_k^\top (\wt{X}_k - \wt{X}_{\geq k})^\top  (\wt{X}_k - \wt{X}_{\geq k}) Z_k \end{bmatrix} \preceq \begin{bmatrix} 4U_k U_k^\top & 0 \\ 0 & 4 Z_k^\top Z_k \end{bmatrix}.
\end{align*}
This step is exactly where commutativity constraints provide better control compared to that under only exchangeability. Gathering the above results and using the shorthand $\E_k [\cdot ] := \E \brk{\cdot \mid \mc{F}_k}$, we can summarize them into the lemma as follows.
\begin{lemma}
	For any $k = 0, 1, \cdots, N-1$ and the filtration $\{\mc{F}_k\}_{k=0}^N$ defined in this section. We have 
	\begin{align*}
		\mathsf{M}_{k+1}(H_k) & \leq \begin{cases}
 		\E_k \brk{\Tr \exp (H_k + 8 \lambda^2 \norm{U_{k+1}}^2 I_{p+r})}, & \text{in general};	\\
			\E_k \brk{\Tr \exp \prn{H_k + 8 \lambda^2 \begin{bmatrix} U_k U_k^\top & 0 \\ 0 &  Z_k^\top Z_k \end{bmatrix}}}, & \text{If Assumption~\ref{assmp:commutativity} holds}.
		\end{cases}
	\end{align*}
\end{lemma}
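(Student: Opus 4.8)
The plan is to bound $\mathsf{M}_{k+1}(H_k)$ by passing the $(k+1)$-th increment through the operator-exponential facts in Lemma~\ref{lem:convex-concave-exp} together with the Rademacher CGF estimate of Lemma~\ref{lem:rademacher-cgf-bound}. Write $A_{k+1} := U_{k+1}(X_{k+1}-X_{\geq k+1})\in\R^{p\times r}$ and let $B_{k+1}\in\mathbb{S}^{p+r}$ be its dilation, so that $\mathsf{M}_{k+1}(H_k)=\E_k\brk{\Tr\exp(H_k+\lambda B_{k+1})}$ with $H_k$ being $\mc{F}_k$-measurable. First I would note that, conditionally on $\mc{F}_k$, the tail $X_{k+1},\dots,X_N$ is still exchangeable, so these matrices share one conditional mean and hence $\E\brk{X_{k+1}-X_{\geq k+1}\mid\mc{F}_k}=0$, i.e.\ $\E\brk{B_{k+1}\mid\mc{F}_k}=0$. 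This is the hypothesis for the symmetrization statement of Lemma~\ref{lem:convex-concave-exp}, which---applied conditionally on $\mc{F}_k$ with an auxiliary Rademacher $\epsilon$ independent of everything---gives $\mathsf{M}_{k+1}(H_k)\leq\E_k\brk{\Tr\exp(H_k+2\lambda\epsilon B_{k+1})}$.

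Next I would condition on $\mc{F}_N$, under which $H_k$ and $B_{k+1}$ are fixed and only $\epsilon$ is random: Lieb's inequality (Lemma~\ref{lem:convex-concave-exp}) yields $\E\brk{\Tr\exp(H_k+2\lambda\epsilon B_{k+1})\mid\mc{F}_N}\leq\Tr\exp\prn{H_k+\log\E_\epsilon\exp(2\lambda\epsilon B_{k+1})}$, and Lemma~\ref{lem:rademacher-cgf-bound} applied to $2\lambda B_{k+1}$ gives $\log\E_\epsilon\exp(2\lambda\epsilon B_{k+1})\preceq 2\lambda^2 B_{k+1}^2$. Using monotonicity of $\Tr\exp$ in the semidefinite order and the tower rule over $\mc{F}_k\subset\mc{F}_N$ then reduces the claim to $\mathsf{M}_{k+1}(H_k)\leq\E_k\brk{\Tr\exp(H_k+2\lambda^2 B_{k+1}^2)}$ plus a purely algebraic semidefinite bound on $B_{k+1}^2$. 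Now $B_{k+1}^2$ is block diagonal with blocks $A_{k+1}A_{k+1}^\top$ and $A_{k+1}^\top A_{k+1}$; since $\norm{X_{k+1}-X_{\geq k+1}}\leq 2$ from $\norm{X_j}\leq1$, we get $A_{k+1}A_{k+1}^\top\preceq 4U_{k+1}U_{k+1}^\top\preceq 4\norm{U_{k+1}}^2 I_p$ and $A_{k+1}^\top A_{k+1}\preceq 4\norm{U_{k+1}^\top U_{k+1}}I_r=4\norm{U_{k+1}}^2 I_r$, hence $2\lambda^2 B_{k+1}^2\preceq 8\lambda^2\norm{U_{k+1}}^2 I_{p+r}$; one more application of $\Tr\exp$ monotonicity gives the general bound.

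For the commutativity branch the point is to replace the second block's bound by one governed by $Z_{k+1}^\top Z_{k+1}$ rather than the uncontrollable left factor $U_{k+1}^\top U_{k+1}$. Assumption~\ref{assmp:commutativity} gives $U_k X_1=\wt X_1 Z_k$ on the support of $X_1$; as the $X_j$ share this support, $U_{k+1}X_j=\wt X_j Z_{k+1}$ for every $j$, and averaging over $j\in\{k+1,\dots,N\}$ gives $A_{k+1}=(\wt X_{k+1}-\wt X_{\geq k+1})Z_{k+1}$, whence $A_{k+1}^\top A_{k+1}\preceq\norm{\wt X_{k+1}-\wt X_{\geq k+1}}^2 Z_{k+1}^\top Z_{k+1}\preceq 4Z_{k+1}^\top Z_{k+1}$ using $\norm{\wt X_j}\leq1$. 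Then $2\lambda^2 B_{k+1}^2\preceq 8\lambda^2\begin{bmatrix}U_{k+1}U_{k+1}^\top & 0\\ 0 & Z_{k+1}^\top Z_{k+1}\end{bmatrix}$ and monotonicity of $\Tr\exp$ concludes. The step I expect to need the most care is exactly this last rewriting: verifying that the Sylvester-type identity, valid only on the common support of the $X_j$, transfers from $X_1$ to all $X_j$ and survives the averaging that defines $X_{\geq k+1}$, since it is precisely this substitution that lets one trade the bad left factor for the good right factor (and, downstream in Theorem~\ref{thm:matrix-valued-hoeffding}, replace $\sum_k\norm{W_k}^2$ by $\norm{\sum_k V_k^\top V_k}$). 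Everything else is a routine chaining of the operator inequalities in Lemmas~\ref{lem:convex-concave-exp} and~\ref{lem:rademacher-cgf-bound}.
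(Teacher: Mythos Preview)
Your proposal is correct and follows essentially the same route as the paper: symmetrization via Lemma~\ref{lem:convex-concave-exp}(4), then Lieb's inequality conditioning on $\mc{F}_N$, then the Rademacher CGF bound of Lemma~\ref{lem:rademacher-cgf-bound}, and finally the semidefinite bounds on the block-diagonal square $B_{k+1}^2$ using $\norm{X_j}\leq 1$ (and $\norm{\wt X_j}\leq 1$ in the commutativity branch). You also correctly use the indices $U_{k+1},Z_{k+1}$ throughout; the $U_k,Z_k$ appearing in the lemma's second case is a typographical slip in the paper, as its own derivation and the first case both carry index $k+1$.
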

We apply the preceding lemma $N$ times for some $H_0 \in \mathbb{S}^{p+r}$ and $$H_k =  H_0 + \lambda \begin{bmatrix} 0 & \sum_{i=1}^k U_i(X_i - X_{\geq i}) \\ \sum_{i=1}^k (X_i - X_{\geq i})^\top U_i^\top & 0\end{bmatrix}.$$
We then have
\begin{align*}
	& \Ep \brk{\Tr \exp \brc{H_0 + \lambda \begin{bmatrix} 0 & \sum_{k=1}^N U_k(X_k - X_{\geq k}) \\ \sum_{k=1}^N (X_k - X_{\geq k})^\top U_k^\top & 0\end{bmatrix}}} = \E \brk{\mathsf{M}_N(H_{N-1})}\nonumber \\
	&  \leq   \E \brk{\E_N \brk{\Tr \exp (H_{N-1} + 8 \lambda^2 \norm{U_{N}}^2 I_{p+r})}}   = \E \brk{\E_{N-1} \brk{\Tr \exp (H_{N-1} + 8 \lambda^2 \norm{U_{N}}^2 I_{p+r})}} \nonumber \\
	& =\E \brk{\mathsf{M}_{N-1} \prn{H_{N-2} + 8 \lambda^2 \norm{U_{N}}^2 I_{p+r}}} \leq  \E \brk{\mathsf{M}_{N-2} \prn{H_{N-3} + 8 \lambda^2 \norm{U_{N-1}}^2 I_{p+r} + 8 \lambda^2 \norm{U_{N}}^2 I_{p+r}}} \\
	&  \leq \cdots \leq \E \brk{\Tr \exp \prn{H_0 + 8 \lambda^2 \sum_{k=1}^N \norm{U_k}^2 I_{p+r}}}.
\end{align*}
Setting $H_0 = - 8 \lambda^2 \sum_{k=1}^N \norm{U_k}^2 I_{p+r}$ yields 
\begin{align*}
	& \frac{1}{p+r} \Ep \brk{\Tr \exp \brc{\lambda \begin{bmatrix} 0 & \sum_{k=1}^N U_k(X_k - X_{\geq k}) \\ \sum_{k=1}^N (X_k - X_{\geq k})^\top U_k^\top & 0\end{bmatrix} - 8 \lambda^2 \sum_{k=1}^N \norm{U_k}^2 I_{p+r}}} \leq 1.
\end{align*}
If Assumption~\ref{assmp:commutativity} holds, we have similarly
\begin{align*}
	& \frac{1}{p+r} \Ep \brk{\Tr \exp \brc{\lambda \begin{bmatrix} 0 & \sum_{k=1}^N U_k(X_k - X_{\geq k}) \\ \sum_{k=1}^N (X_k - X_{\geq k})^\top U_k^\top & 0\end{bmatrix} - 8 \lambda^2  \begin{bmatrix} \sum_{k=1}^N U_k U_k^\top & 0 \\ 0 &  \sum_{k=1}^N Z_k^\top Z_k \end{bmatrix}}} \leq 1.
\end{align*}
\paragraph{Step 2: MGF bounds for $\{X_k - \wb{X}\}_{k=1}^N$ in the general case} Let $u = (\norm{U_1}, \cdots, \norm{U_N}) \in \R^N$, $U^\top \in \R^{N \times q \times p}$, $Z^\top \in \R^{N \times q' \times r}$ and similarly $X^\top, \wt{X}^\top, W^\top, V^\top$. We identify that $\sum_{k=1}^N U_k (X_k - X_{\geq k}) = \llangle U, X \times_1 (I- \mathsf{B}_N) \rrangle$ and $\sum_{k=1}^N \norm{U_k}^2 = \norm{u}^2$. Setting $U = W \times_1 \Pi $ and $X = X \times_1 \Pi$ yields
\begin{subequations}
	\begin{align}
		\llangle U, X \times_1 (I- \mathsf{B}_N) \Pi   \rrangle & = \llangle W \times_1 \Pi^\top (I-\mathsf{B}_N)^\top  \Pi, X \rrangle, \label{eq:mid-matrix-Hoeffding-U-X-general} \\
			\norm{u}^2 & = \sum_{k=1}^N \norm{W_k}^2. \label{eq:mid-matrix-Hoeffding-u-general}
	\end{align}
\end{subequations}
By convexity of $\Tr \exp$ and substitution of Eqs.~\eqref{eq:mid-matrix-Hoeffding-U-X-general} and \eqref{eq:mid-matrix-Hoeffding-u-general}, we can then average over all $\Pi \in \permset_N$ (see the proof in Appendix~\ref{proof:tensor-hoeffding} and \ref{proof:tensor-bernstein} for calculation details) such that
\begin{align*}
	& \frac{1}{p+r} \Ep \brk{\Tr \exp \brc{\frac{\lambda}{1 + \epsilon_N} \begin{bmatrix} 0 & \llangle W \times_1 \proj_{\ones_N}^\perp, X \rrangle \\ \llangle W \times_1 \proj_{\ones_N}^\perp, X \rrangle^\top & 0\end{bmatrix} - 8 \lambda^2  \cdot \sum_{k=1}^N \norm{W_k}^2 I_{p+r}}} \leq 1,
\end{align*}
\paragraph{Step 3: MGF bounds for $\{X_k - \wb{X}\}_{k=1}^N$ given commutativity conditions} Note that $\sum_{k=1}^N U_k U_k^\top = \llangle U, U^\top \rrangle$ and $\sum_{k=1}^N Z_k^\top Z_k = \llangle Z^\top, Z \rrangle$. For any $\Pi \in \permset_N$, we substitute in $U = W \times_1 \mathsf{A}_N(\mathsf{A}_N^\top \mathsf{A}_N)^\dagger \Pi$ and replace $X$ with $X \times_1 \Pi$. If Assumption~\ref{assmp:commutativity} holds for the commutativity pair $(W, X)$ and $(\wt{X}, V)$, one can easily check that $Z = V \times_1 (\mathsf{A}_N^\top \mathsf{A}_N)^\dagger \Pi$. Under these substitutions, the following identities hold
\begin{subequations}
\begin{align}
	 \llangle U, X \times_1 (I- \mathsf{B}_N) \Pi   \rrangle & = \llangle W \times_1 \Pi^\top (I-\mathsf{B}_N)^\top \mathsf{A}_N(\mathsf{A}_N^\top \mathsf{A}_N)^\dagger \Pi, X \rrangle = \llangle W \times_1 \Pi^\top (\mathsf{A}_N^\top \mathsf{A}_N)^\dagger \Pi, X \rrangle, \label{eq:mid-matrix-Hoeffding-U-X} \\
	 \llangle U, U^\top \rrangle &  =  \llangle W \times_1 \Pi^\top (\mathsf{A}_N^\top \mathsf{A}_N)^\dagger  \Pi, W^\top \rrangle, \label{eq:mid-matrix-Hoeffding-W-W} \\
	 \llangle Z^\top, Z \rrangle &  =  \llangle V^\top \times_1 \Pi^\top (\mathsf{A}_N^\top \mathsf{A}_N)^\dagger  \Pi, V \rrangle. \label{eq:mid-matrix-Hoeffding-Z-Z}
\end{align}
\end{subequations}
Again using convexity of $\Tr \exp$ and substituting Eqs.~\eqref{eq:mid-matrix-Hoeffding-U-X} to \eqref{eq:mid-matrix-Hoeffding-Z-Z}, we can then average over all $\Pi \in \permset_N$ when the commutativity conditions hold,
\begin{align*}
	& \frac{1}{p+r} \Ep \brk{\Tr \exp \brc{\frac{\lambda}{1 + \epsilon_N} \begin{bmatrix} 0 & \llangle W \times_1 \proj_{\ones_N}^\perp, X \rrangle \\ \llangle W \times_1 \proj_{\ones_N}^\perp, X \rrangle^\top & 0\end{bmatrix} - \frac{8\lambda^2}{1 + \epsilon_N} \begin{bmatrix} \sum_{k=1}^N W_k W_k^\top & 0 \\ 0 &  \sum_{k=1}^N V_k^\top V_k \end{bmatrix}}} \leq 1.
\end{align*}

\paragraph{Step 4: Concluding the proof}
The proof is done by invoking Golden-Thompson inequality and substituting $\lambda$ with $\lambda \cdot (1 + \epsilon_N)$. For instance, using the last inequality and denoting the matrix inside the trace exponential operator by $\Omega(\lambda)$, we have when Assumption~\ref{assmp:commutativity} holds,
\begin{align*}
	& \Ep \brk{\wb{\exp }\brc{\lambda \prn{ \llangle W, X \rrangle -  N\cdot \wb{W}\; \wb{X} }}} \\
	& = \frac{1}{p+r} \Ep \brk{\Tr \exp \brc{\lambda \begin{bmatrix} 0 & \llangle W \times_1 \proj_{\ones_N}^\perp, X \rrangle \\ \llangle W \times_1 \proj_{\ones_N}^\perp, X \rrangle^\top & 0\end{bmatrix}}} \nonumber \\
	& = \frac{1}{p+r} \Ep \brk{\Tr \exp \brc{\Omega(\lambda(1 + \epsilon_N)) + 8 \lambda^2(1+\epsilon_N) \cdot \begin{bmatrix} \sum_{k=1}^N W_k W_k^\top & 0 \\ 0 &  \sum_{k=1}^N V_k^\top V_k \end{bmatrix} }} \nonumber \\
	& \stackrel{\mathrm{(i)}}{\leq} \frac{1}{p+r} \Ep \brk{\Tr \exp \brc{\Omega(\lambda(1 + \epsilon_N))} \cdot \exp \brc{8 \lambda^2(1+\epsilon_N) \cdot \begin{bmatrix} \sum_{k=1}^N W_k W_k^\top & 0 \\ 0 &  \sum_{k=1}^N V_k^\top V_k \end{bmatrix} }} \nonumber \\
	& \stackrel{\mathrm{(ii)}}{\leq} \frac{1}{p+r} \Ep \brk{\Tr \exp \brc{\Omega(\lambda(1 + \epsilon_N))}} \cdot \exp \brc{8 \lambda^2 (1 + \epsilon_N) \cdot \max \brc{\norm{\sum_{k=1}^N W_k W_k^\top}, \norm{\sum_{k=1}^N V_k^\top V_k} }} \nonumber \\
	& \leq \exp \brc{8 \lambda^2 (1 + \epsilon_N) \cdot \max \brc{\norm{\sum_{k=1}^N W_k W_k^\top}, \norm{\sum_{k=1}^N V_k^\top V_k} }},
\end{align*}
where in (i) we use Golden-Thompson and (ii) we use the monotonicity of $\Tr \exp$.

\subsection{Proof of Theorem~\ref{thm:matrix-valued-bernstein}} \label{proof:matrix-valued-bernstein}
We start by defining the filtration for the martingale argument for this proof. Let $$\mc{F}_0 = \sigma \prn{\wb{X}, \wb{\wt{X}}, \Sigma, \wt{\Sigma}, \max_{k \in [N]} \norm{X_k - \wb{X}}^2, \max_{k \in [N]} \norm{\wt{X}_k - \wb{\wt{X}}}^2}.$$ We can then define inductively $\mc{F}_k = \sigma(\mc{F}_{k-1}, X_k)$ for $k \in [N]$. We will make use of the following CGF bound from~\cite[Lemma 6.6.2]{tropp2015introduction}.
\begin{lemma}[Bernstein CGF bound] \label{lem:bernstein-cgf-bound}
	For a random symmetric matrix $A \in \mathbb{S}^n$ such that $\E A = 0$ and $\norm{A} \leq L$. We have for $|\lambda| < 3/L$ that
	\begin{align*}
		\log \E \exp (\lambda A) \preceq \frac{\lambda^2}{2 \prn{1 - \frac{|\lambda|L}{3}}} \cdot \mathbb{E} A^2.
	\end{align*}
\end{lemma}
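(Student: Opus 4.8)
The plan is to prove the semidefinite inequality $\log \E \exp(\lambda A) \preceq \frac{\lambda^2}{2(1-|\lambda|L/3)}\,\E A^2$ by the standard recipe of reducing the matrix exponential to a quadratic in $A$, controlling the quadratic's coefficient through the spectral bound $\norm{A}\le L$, and then transferring a scalar estimate to the operator ordering. First I would introduce the scalar function
\[ f(a) = \frac{e^{\lambda a}-1-\lambda a}{a^2}\ (a\neq 0), \qquad f(0)=\frac{\lambda^2}{2}, \]
chosen so that $e^{\lambda a}=1+\lambda a + a^2 f(a)$ for every real $a$. Since $A$ is symmetric and $f(A)$ commutes with $A$, the functional calculus gives the operator identity $\exp(\lambda A) = I + \lambda A + A\,f(A)\,A$. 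Taking expectations and using $\E A = 0$ to kill the linear term leaves
\[ \E \exp(\lambda A) = I + \E\brk{A\,f(A)\,A}. \]

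Next I would bound $f$ on the spectrum of $A$. Writing out the power series $f(a) = \sum_{j\ge 0} \lambda^{j+2} a^{j}/(j+2)!$, dominating each summand by its absolute value, and using $(j+2)! \ge 2\cdot 3^{j}$, I obtain for all $|a|\le L$ and $|\lambda| < 3/L$ the uniform scalar estimate
\[ f(a) \le \sum_{j\ge 0}\frac{|\lambda|^{j+2}L^{j}}{(j+2)!} \le \frac{\lambda^2}{2}\sum_{j\ge 0}\prn{\frac{|\lambda|L}{3}}^{j} = \frac{\lambda^2}{2\prn{1-|\lambda|L/3}} =: C. \]
Because $\norm{A}\le L$ forces every eigenvalue of $A$ into $[-L,L]$, the transfer rule for matrix functions upgrades this eigenvalue-wise bound to $f(A) \preceq C\cdot I$, and conjugating by the symmetric matrix $A$ (which preserves the partial order) gives $A\,f(A)\,A \preceq C A^2$. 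Combined with the previous display this yields $\E \exp(\lambda A) \preceq I + C\,\E A^2$.

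Finally I would pass through the logarithm. The matrix $\E\exp(\lambda A)$ is positive definite, so its logarithm is well defined; applying the operator monotonicity of $\log$ together with the scalar inequality $\log(1+t)\le t$ (via the transfer rule, applied to the positive semidefinite matrix $C\,\E A^2$) gives
\[ \log \E \exp(\lambda A) \preceq \log\prn{I + C\,\E A^2} \preceq C\,\E A^2 = \frac{\lambda^2}{2\prn{1-|\lambda|L/3}}\,\E A^2, \]
which is exactly the claimed bound.

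The main obstacle is the matrix-analytic bookkeeping in the middle step rather than any single hard estimate: one must correctly invoke (i) the transfer rule, so that the scalar domination $f(a)\le C$ on $[-L,L]$ lifts to $f(A)\preceq C I$, and (ii) the conjugation rule $B\preceq C' I \Rightarrow A B A \preceq C' A^2$ for symmetric $A$, since these are what move the argument from scalars to the semidefinite order. Everything else is the routine power-series computation controlling $C$ (where the radius $|\lambda|<3/L$ enters as the convergence condition of the geometric series) and the operator monotonicity of $\log$; notably, none of these steps require the heavier trace-inequality tools (Lieb, Golden--Thompson) from Lemma~\ref{lem:convex-concave-exp}.
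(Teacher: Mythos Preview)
Your proof is correct and is essentially the standard argument: the paper does not supply its own proof of this lemma but cites \cite[Lemma~6.6.2]{tropp2015introduction}, and what you wrote is precisely Tropp's proof there (the decomposition $e^{\lambda A}=I+\lambda A+Af(A)A$, the factorial estimate $(j+2)!\ge 2\cdot 3^j$ to sum the geometric series, the transfer rule to lift $f(a)\le C$ to $f(A)\preceq CI$, and the operator monotonicity of $\log$ together with $\log(I+M)\preceq M$).
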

We present a slightly stronger version of Theorem~\ref{thm:matrix-valued-bernstein} in the following, which will be useful in deriving combinatorial matrix sum inequalities (cf.~Cor.~\ref{cor:matrix-valued-combinatorial}).
\begin{manualtheorem}{\ref*{thm:matrix-valued-bernstein}\textquotesingle} \label{thm:matrix-valued-bernstein-stronger}
	Theorem~\ref{thm:matrix-valued-bernstein} holds with $w = \operatorname*{ess\,sup}_{X} (\max_{k \in [N]}\norm{W_1 X_k}, \cdots, \max_{k \in [N]}\norm{W_N X_k})^\top$ and $v = \operatorname*{ess\,sup}_{\wt{X}}  (\max_{k \in [N]} \norm{\wt{X}_k V_1}, \cdots, \max_{k \in [N]} \norm{\wt{X}_kV_N})^\top$ .
\end{manualtheorem}
It is evident Theorem~\ref{thm:matrix-valued-bernstein} directly follows from Theorem~\ref{thm:matrix-valued-bernstein-stronger}, and we devote the remainder of the proof to Theorem~\ref{thm:matrix-valued-bernstein-stronger}.

\paragraph{Step 1: MGF bounds for $\{X_k - X_{\geq k}\}_{k=1}^N$ by the martingale argument} For any $U \in \R^{N \times p \times q}$ such that Assumption~\ref{assmp:commutativity} holds for the associated commutativity pair $(\wt{X}, Z)$, let $u = (\max_{k \in [N]} \norm{U_1X_k}, \cdots, \max_{k \in [N]} \norm{U_N X_k})^\top$ and $z = (\max_{k \in [N]} \norm{\wt{X}_kZ_1}, \cdots, \max_{k \in [N]} \norm{\wt{X}_kZ_N})^\top$ in $\R^N$. Define the following positive semidefinite matrices in $\mc{F}_{k-1}$
\begin{align*}
	\Sigma_{\geq k} = \frac{1}{N+1-k} \sum_{i=k}^N (X_i - \wb{X}) (X_i - \wb{X})^\top, \qquad \wt{\Sigma}_{\geq k} = \frac{1}{N + 1-k} \sum_{i=k}^N (\wt{X}_i - \wb{\wt{X}})^\top (\wt{X}_i - \wb{\wt{X}}).	
\end{align*}
It is clear that $\Sigma_{\geq 1} = \Sigma$ and $\wt{\Sigma}_{\geq 1} = \wt{\Sigma}$. Consider $H_k = \sum_{i=1}^k D_i$ where
\begin{align*}
	D_k = \lambda \begin{bmatrix} 0 & U_k(X_k - X_{\geq k}) \\ (X_k - X_{\geq k})^\top U_k^\top & 0\end{bmatrix} - \underbrace{\frac{\lambda^2}{2\prn{1 - \frac{2|\lambda|}{3}  \min \{\norm{u}_\infty, \norm{z}_\infty\} }}}_{=:c_\lambda} \cdot \underbrace{\begin{bmatrix} U_k \Sigma_{\geq k} U_k^\top & 0 \\ 0 &Z_k^\top \wt{\Sigma}_{\geq k} Z_k \end{bmatrix}}_{=:\Delta_k} .
\end{align*}
With the notation  $\E_k [\cdot ] := \E \brk{\cdot \mid \mc{F}_k}$, we observe that $\E_{k-1} [U_k (X_k - X_{\geq k})] = 0$ and $\norm{U_k (X_k - X_{\geq k})} = \norm{(\wt{X}_k - \wt{X}_{\geq k} )Z_k} \leq 2 \min \{\norm{u}_\infty, \norm{z}_\infty\}$. We can then invoke Lemma~\ref{lem:bernstein-cgf-bound} for $|\lambda| \leq \frac{3}{2 \min \{\norm{u}_\infty, \norm{z}_\infty\}}$
\begin{align*}
	 & \log \E_{k-1} \exp \brc{ \lambda \begin{bmatrix} 0 & U_k(X_k - X_{\geq k}) \\ (X_k - X_{\geq k})^\top U_k^\top & 0\end{bmatrix} } \nonumber \\
	 & \preceq c_\lambda \begin{bmatrix} U_k \cdot \E_{k-1} \brk{(X_k - X_{\geq k}) (X_k - X_{\geq k})^\top } \cdot U_k^\top & 0 \\ 0 &Z_k^\top \cdot \E_{k-1} \brk{(\wt{X}_k - \wt{X}_{\geq k})^\top (\wt{X}_k - \wt{X}_{\geq k}) } \cdot Z_k \end{bmatrix} \nonumber \\
	 & \preceq  c_\lambda \begin{bmatrix} U_k \cdot \E_{k-1} \brk{(X_k - \wb{X}) (X_k - \wb{X})^\top } \cdot U_k^\top & 0 \\ 0 &Z_k^\top \cdot \E_{k-1} \brk{(\wt{X}_k - \wb{\wt{X}})^\top (\wt{X}_k - \wb{\wt{X}}) } \cdot Z_k \end{bmatrix} \nonumber \\
	 & = c_\lambda \Delta_k.
\end{align*} 
Therefore by Lieb's inequality, for all $k \in [N]$ it follows that
\begin{align*}
	 \E \brk{\Tr \exp (H_k)} & \leq \E \E_{k-1} \brk{\Tr \exp \brc{H_{k-1} - c_\lambda V_k + \log \E_{k-1} \exp \brc{ \lambda \begin{bmatrix} 0 & U_k(X_k - X_{\geq k}) \\ (X_k - X_{\geq k})^\top U_k^\top & 0\end{bmatrix} } }} \nonumber \\
	 & \leq \E \E_{k-1} \brk{\Tr \exp (H_{k-1})}  = \E \brk{\Tr \exp (H_{k-1})}.
\end{align*}
Since $H_0 = 0$, one then has
\begin{align*}
	& \frac{1}{p+r} \Ep \brk{\Tr \exp \brc{\lambda \begin{bmatrix} 0 & \sum_{k=1}^N U_k(X_k - X_{\geq k}) \\ \sum_{k=1}^N (X_k - X_{\geq k})^\top U_k^\top & 0\end{bmatrix} - c_\lambda \begin{bmatrix} \sum_{k=1}^N U_k \Sigma_{\geq k} U_k^\top & 0 \\ 0 &  \sum_{k=1}^N Z_k^\top \wt{\Sigma}_{\geq k} Z_k \end{bmatrix}}} \nonumber \\
	& = \frac{1}{p+r} \E \brk{\Tr \exp (H_N)} \leq \frac{1}{p+r} \E \brk{\Tr \exp(H_0)} = 1
\end{align*}

\paragraph{Step 2: MGF bounds for $\{X_k - \wb{X}\}_{k=1}^N$ by Jensen's inequality} For all $\Pi \in \permset_N$, let $U = W \times_1 \Pi $ and $X = X \times_1 \Pi$. We still have Eq.~\eqref{eq:mid-matrix-Hoeffding-U-X-general} and it holds that $\norm{u}_\infty = \norm{w}_\infty, \norm{z}_\infty = \norm{v}_\infty$. As is proven in Appendix~\ref{proof:matrix-valued-hoeffding}, by averaging over all permutations, 
\begin{align*}
	\frac{1}{N!} \sum_{\Pi \in \permset_N} \llangle W \times_1 \Pi, X \times_1 (I- \mathsf{B}_N) \Pi   \rrangle = \frac{1}{N!} \sum_{\pi \in \permmapset_N} \sum_{k=1}^N W_{\pi(k)}(X_{\pi(k)} - X_{\geq k}^\pi) = \frac{1}{1 + \epsilon_N} \llangle W \times_1 \proj_{\ones_N}^\perp, X \rrangle,
\end{align*}
where $X^\pi_{\geq k} := \frac{1}{N+1-k} \sum_{i=k}^N X_{\pi(i)}$. It boils down to controlling
\begin{align*}
	\frac{1}{N!} \sum_{\pi \in \permmapset_N} \sum_{k=1}^N W_{\pi(k)} \Sigma_{\geq k}^\pi W_{\pi(k)}^\top &:= 	\frac{1}{N!} \sum_{\pi \in \permmapset_N} \sum_{k=1}^N W_{\pi(k)} \cdot \brc{ \frac{1}{N+1-k} \sum_{i=k}^N (X_{\pi(i)} - \wb{X}) (X_{\pi(i)} - \wb{X})^\top} \cdot W_{\pi(k)}^\top, \\
	\frac{1}{N!} \sum_{\pi \in \permmapset_N} \sum_{k=1}^N V_{\pi(k)}^\top \wt{\Sigma}_{\geq k}^\pi V_{\pi(k)} &:= 	\frac{1}{N!} \sum_{\pi \in \permmapset_N} \sum_{k=1}^N V_{\pi(k)}^\top  \cdot \brc{\frac{1}{N+1-k} \sum_{i=k}^N (\wt{X}_{\pi(i)} - \wb{\wt{X}})^\top (\wt{X}_{\pi(i)} - \wb{\wt{X}})} \cdot V_{\pi(k)}^\top.
\end{align*}
The next lemma follows.

\begin{lemma} \label{lem:perm-avg-matrix-valued-bernstein}
	For all $N \geq 2$,
	\begin{align*}
		\frac{1}{N!} \sum_{\pi \in \permmapset_N} \sum_{k=1}^N W_{\pi(k)} \Sigma_{\geq k}^\pi W_{\pi(k)}^\top & \preceq \prn{\frac{1}{1 + \epsilon_N} \norm{\Sigma} + \frac{\epsilon_N}{1 + \epsilon_N} \max_{l \in [N]} \norm{X_l - \wb{X}}^2 } \cdot \sum_{k=1}^N W_k W_k^\top =\frac{\wt{\sigma}_X^2}{1 + \epsilon_N} \sum_{k=1}^N W_k W_k^\top, \\
		\frac{1}{N!} \sum_{\pi \in \permmapset_N} \sum_{k=1}^N V_{\pi(k)}^\top \wt{\Sigma}_{\geq k}^\pi V_{\pi(k)} & \preceq \prn{\frac{1}{1 + \epsilon_N} \norm{\wt{\Sigma}} + \frac{\epsilon_N}{1 + \epsilon_N} \max_{l \in [N]} \norm{\wt{X}_l - \wb{\wt{X}}}^2 } \cdot \sum_{k=1}^N V_k^\top V_k  =\frac{\wt{\sigma}_{\wt{X}}^2}{1 + \epsilon_N} \sum_{k=1}^N V_k^\top V_k.
	\end{align*}
\end{lemma}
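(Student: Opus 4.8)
The plan is to compute the permutation average on the left-hand side exactly and then pass to a Loewner bound. Set $M_j := (X_j - \wb{X})(X_j - \wb{X})^\top$; since $\wb{X}$ is invariant under permutations of $(X_1,\dots,X_N)$, each $M_j$ is a fixed positive semidefinite matrix not depending on $\pi$, and $\Sigma_{\geq k}^\pi = \frac{1}{N+1-k}\sum_{i=k}^N M_{\pi(i)}$ while $\Sigma = \frac1N\sum_{j=1}^N M_j$. Expanding, the left-hand side equals $\frac{1}{N!}\sum_{\pi\in\permmapset_N}\sum_{k=1}^N\frac{1}{N+1-k}\sum_{i=k}^N W_{\pi(k)}M_{\pi(i)}W_{\pi(k)}^\top$. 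First I would split the innermost sum into the diagonal term $i=k$ and the off-diagonal terms $i>k$. Averaging over all $\pi$, the diagonal term contributes $\frac1N\sum_{j}W_jM_jW_j^\top$ regardless of $k$, and each off-diagonal term with $k<i$ contributes $\frac{1}{N(N-1)}\sum_{j\ne l}W_jM_lW_j^\top$, again independent of $(k,i)$; the $k$-dependence therefore collapses into the scalar sums $\sum_{k=1}^N\frac{1}{N+1-k}=H_N$ and $\sum_{k=1}^N\frac{N-k}{N+1-k}=N-H_N$.

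Next I would use $\sum_{j\ne l}W_jM_lW_j^\top=\sum_j W_j(N\Sigma-M_j)W_j^\top=N\sum_jW_j\Sigma W_j^\top-\sum_jW_jM_jW_j^\top$ to reduce everything to the two operators $\sum_jW_j\Sigma W_j^\top$ and $\sum_jW_jM_jW_j^\top$. Collecting coefficients and simplifying with the identity $1+\epsilon_N=\frac{N-1}{N-H_N}$ (equivalently $\frac{1}{1+\epsilon_N}=\frac{N-H_N}{N-1}$ and $\frac{\epsilon_N}{1+\epsilon_N}=\frac{H_N-1}{N-1}$) should give the exact identity
\[\frac{1}{N!}\sum_{\pi\in\permmapset_N}\sum_{k=1}^N W_{\pi(k)}\Sigma_{\geq k}^\pi W_{\pi(k)}^\top=\frac{1}{1+\epsilon_N}\sum_{j=1}^N W_j\Sigma W_j^\top+\frac{\epsilon_N}{1+\epsilon_N}\sum_{j=1}^N W_jM_jW_j^\top,\]
valid for $N\geq2$ (the division by $N-1$ is where this is needed). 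Finally, $\Sigma\preceq\norm{\Sigma}I_q=\sigma_X^2 I_q$ and $M_j\preceq\norm{M_j}I_q=\norm{X_j-\wb{X}}^2 I_q\preceq\big(\max_{l\in[N]}\norm{X_l-\wb{X}}^2\big)I_q$; conjugating each bound by $W_j$ (which preserves the Loewner order) and summing over $j$ yields $\preceq\frac{\sigma_X^2+\epsilon_N\max_l\norm{X_l-\wb{X}}^2}{1+\epsilon_N}\sum_jW_jW_j^\top=\frac{\wt\sigma_X^2}{1+\epsilon_N}\sum_jW_jW_j^\top$, as claimed.

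The second inequality follows from the identical argument with $W_j$ replaced by $V_j^\top$ acting on the left and $V_j$ on the right, and $M_l$ replaced by $\wt M_l:=(\wt X_l-\wb{\wt X})^\top(\wt X_l-\wb{\wt X})$; this produces $\frac{1}{1+\epsilon_N}\sum_jV_j^\top\wt\Sigma V_j+\frac{\epsilon_N}{1+\epsilon_N}\sum_jV_j^\top\wt M_jV_j$, bounded by $\frac{\wt\sigma_{\wt X}^2}{1+\epsilon_N}\sum_jV_j^\top V_j$ since $\wt\Sigma\preceq\sigma_{\wt X}^2 I$ and $\wt M_j\preceq\norm{\wt X_j-\wb{\wt X}}^2 I$. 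There is no serious obstacle here beyond the combinatorial bookkeeping: the one thing to verify carefully is that the two coefficients collapse exactly to $\frac{1}{1+\epsilon_N}$ and $\frac{\epsilon_N}{1+\epsilon_N}$. If preferred, the same count can be organized through the matrix $\mathsf{B}_N$ used in the proof of Theorem~\ref{thm:tensor-hoeffding}: its diagonal sum is $\Tr(\mathsf{B}_N)=H_N$ and, since every row of $\mathsf{B}_N$ sums to one, its total entry sum is $\ones_N^\top\mathsf{B}_N\ones_N=N$, so its off-diagonal entry sum is $N-H_N$, matching the direct computation.
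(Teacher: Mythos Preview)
Your proof is correct and follows essentially the same route as the paper's: both average over $\pi$ by splitting into the diagonal term $i=k$ and the off-diagonal terms $i>k$, reduce to the two operators $\sum_j W_j\Sigma W_j^\top$ and $\sum_j W_j M_j W_j^\top$ with coefficients $\frac{N-H_N}{N-1}=\frac{1}{1+\epsilon_N}$ and $\frac{H_N-1}{N-1}=\frac{\epsilon_N}{1+\epsilon_N}$, and then apply the Loewner bounds $\Sigma\preceq\norm{\Sigma}I$ and $M_j\preceq\max_l\norm{X_l-\wb X}^2 I$. The only cosmetic difference is that the paper regroups the diagonal coefficient as $\frac{1}{N}=\frac{k-1}{N(N-1)}+\frac{N-k}{N(N-1)}$ before summing in $k$, whereas you use $\sum_{j\ne l}W_jM_lW_j^\top=N\sum_jW_j\Sigma W_j^\top-\sum_jW_jM_jW_j^\top$ after summing; the arithmetic is identical.
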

By Jensen's inequality and the lemma above, we have
\begin{align*}
	& \frac{1}{p+r} \Ep \brk{\Tr \exp \brc{\frac{\lambda}{1 + \epsilon_N} \begin{bmatrix} 0 & \llangle W \times_1 \proj_{\ones_N}^\perp, X \rrangle \\ \llangle W \times_1 \proj_{\ones_N}^\perp, X \rrangle^\top & 0\end{bmatrix} - \frac{c_\lambda}{1 + \epsilon_N} \begin{bmatrix} \wt{\sigma}^2_X \sum_{k=1}^N W_k W_k^\top & 0 \\ 0 & \wt{\sigma}^2_{\wt{X}} \sum_{k=1}^N V_k^\top  V_k \end{bmatrix}}} \leq  1.
\end{align*}

\paragraph{Step 3: Concluding the proof} The proof is done by applying Golden-Thompson, substituting $\lambda$ with $\lambda \cdot (1 + \epsilon_N)$ and the following inequality
\begin{align*}
	\begin{bmatrix} \sum_{k=1}^N W_k W_k^\top & 0 \\ 0 &  \sum_{k=1}^N V_k^\top  V_k \end{bmatrix} \preceq \max \brc{\wt{\sigma}_X^2 \norm{\sum_{k=1}^N W_k W_k^\top}, \wt{\sigma}^2_{\wt{X}} \norm{\sum_{k=1}^N V_k^\top V_k} } I_{p +r}.
\end{align*}

\subsection{Proof of Lemma~\ref{lem:perm-avg-matrix-valued-bernstein}} \label{proof:perm-avg-matrix-valued-bernstein}
We detail the proof for
\begin{align*}
	\frac{1}{N!} \sum_{\pi \in \permmapset_N} \sum_{k=1}^N W_{\pi(k)} \Sigma_{\geq k}^\pi W_{\pi(k)}^\top \preceq \prn{\frac{1}{1 + \epsilon_N} \norm{\Sigma} + \frac{\epsilon_N}{1 + \epsilon_N} \max_{l \in [N]} \norm{X_l - \wb{X}}^2 } \cdot \sum_{k=1}^N W_k W_k^\top
\end{align*}
and the second inequality follows from the same argument. Let $\uniform(\permmapset_N)$ be the uniform distribution on $\permmapset_N$, we can write
\begin{align*}
	& \frac{1}{N!} \sum_{\pi \in \permmapset_N} \sum_{k=1}^N W_{\pi(k)} \Sigma_{\geq k}^\pi W_{\pi(k)}^\top = \E_{\pi \sim \uniform(\permmapset_N)} \brk{\sum_{k=1}^N W_{\pi(k)} \cdot \brc{ \frac{1}{N+1-k} \sum_{i=k}^N (X_{\pi(i)} - \wb{X}) (X_{\pi(i)} - \wb{X})^\top} \cdot W_{\pi(k)}^\top} \nonumber \\
	& = \sum_{k=1}^N  \frac{1}{N+1-k}  \E_{\pi \sim \uniform(\permmapset_N)} \brk{ W_{\pi(k)} (X_{\pi(k)} - \wb{X}) (X_{\pi(k)} - \wb{X})^\top W_{\pi(k)}^\top} \nonumber \\
	& \qquad + \sum_{k=1}^N \sum_{i=k+1}^N  \frac{1}{N+1-k}  \E_{\pi \sim \uniform(\permmapset_N)} \brk{ W_{\pi(k)} (X_{\pi(i)} - \wb{X}) (X_{\pi(i)} - \wb{X})^\top W_{\pi(k)}^\top }.
\end{align*}
Since for any fixed $k \neq i$, $\pi(k)$ is uniform on $[N]$ and $(\pi(k), \pi(i))$ is uniform on $\{(j, l) : j \neq l, j, l \in [N]\}$, we can proceed the calculation
\begin{align*}
	& \frac{1}{N!} \sum_{\pi \in \permmapset_N} \sum_{k=1}^N W_{\pi(k)} \Sigma_{\geq k}^\pi W_{\pi(k)}^\top \\
	& = \sum_{k=1}^N \frac{1}{N+1-k} \cdot \brc{\frac{1}{N} \sum_{j=1}^N W_j (X_j - \wb{X}) (X_j - \wb{X})^\top W_j^\top + \frac{N-k}{N(N-1)} \sum_{j, l \in [N], j \neq l} W_j (X_l - \wb{X}) (X_l - \wb{X})^\top W_j^\top  } \nonumber \\
	&  = \sum_{k=1}^N \frac{1}{N+1-k} \cdot \brc{\frac{k-1}{N(N-1)} \sum_{j=1}^N W_j (X_j - \wb{X}) (X_j - \wb{X})^\top W_j^\top + \frac{N-k}{N(N-1)} \sum_{j=1}^N \sum_{l=1}^N W_j (X_l - \wb{X}) (X_l - \wb{X})^\top W_j^\top  }.
\end{align*}
Using the following inequalities
\begin{align*}
	\sum_{j=1}^N W_j (X_j - \wb{X}) (X_j - \wb{X})^\top W_j^\top & \preceq \max_{l \in [N]} \norm{X_l - \wb{X}}^2 \cdot \sum_{j=1}^N W_j W_j^\top, \\
	\sum_{j=1}^N \sum_{l=1}^N W_j (X_l - \wb{X}) (X_l - \wb{X})^\top W_j^\top & = N \sum_{j=1}^N W_j \Sigma W_j^\top \preceq N \norm{\Sigma} \cdot \sum_{j=1}^N W_j W_j^\top,
\end{align*}
we can complete the proof by
\begin{align*}
	& \frac{1}{N!} \sum_{\pi \in \permmapset_N} \sum_{k=1}^N W_{\pi(k)} \Sigma_{\geq k}^\pi W_{\pi(k)}^\top \nonumber \\
	& \preceq \brc{\frac{1}{N-1} \sum_{k=1}^N \frac{N-k}{N+1-k} \norm{\Sigma} + \frac{1}{N(N-1)}\sum_{k=1}^N \frac{k-1}{N+1-k}  \max_{l \in [N]} \norm{X_l - \wb{X}}^2 } \cdot \sum_{j=1}^N W_j W_j^\top \nonumber \\
	& = \brc{\frac{N - H_N}{N-1} \norm{\Sigma}+ \frac{N(H_N - 1)}{N(N-1)}  \max_{l \in [N]} \norm{X_l - \wb{X}}^2 } \cdot \sum_{j=1}^N W_j W_j^\top \nonumber \\
	&= \brc{\frac{1}{1 + \epsilon_N} \norm{\Sigma} + \frac{\epsilon_N}{1 + \epsilon_N} \max_{l \in [N]} \norm{X_l - \wb{X}}^2 } \cdot \sum_{j=1}^N W_j W_j^\top.
\end{align*}

\section{Proofs for theoretical implications and applications} \label{sec:proof-theoretical-implications}

\subsection{Proofs for connections to i.i.d.\ setting} \label{proof:tensor-iid}
It is first instructive to show $\sum_{l=1}^{N_K} \<\mathscr{W}_l, \E_{\mathscr{X} \sim P}[\mathscr{X}] \>_{\tspace_{K-1}} = \<W, \mu_P \ones_{\tspace}\>_{\tspace}$. In fact, using that $W$ is balanced along mode-$K$, we verify
\begin{align*}
    \sum_{l=1}^{N_K} \<\mathscr{W}_l, \E_{\mathscr{X} \sim P}[\mathscr{X}] \>_{\tspace_{K-1}} & = \left\< \sum_{l=1}^{N_K}  \mathscr{W}_l, \E_{\mathscr{X} \sim P}[\mathscr{X}] \right\>_{\tspace_{K-1}} = N_K \left\< \wb{W}_{K-1}, \E_{\mathscr{X} \sim P}[\mathscr{X}] \right\>_{\tspace_{K-1}} \nonumber \\
    &= \<\wb{W} \ones_{\tspace},  \E_{\mathscr{X} \sim P}[\mathscr{X}]\>_{\tspace} = \<W \times_1 \proj_{\ones_{N_1}} \times_2 \cdots \times_K \proj_{\ones_{N_K}},  \E_{\mathscr{X} \sim P}[\mathscr{X}]\>_{\tspace} \nonumber \\
    & = \<W ,  \E_{\mathscr{X} \sim P}[\mathscr{X}]\times_1 \proj_{\ones_{N_1}} \times_2 \cdots \times_K \proj_{\ones_{N_K}}\>_{\tspace}= \<W, \mu_P \ones_{\tspace}\>_{\tspace}.
\end{align*}
We return to the main proof. For any $\wt{N}_K \geq N_K$, let $\wt{W} \in \R^{N_1 \times N_2 \cdots \times N_{K-1} \times \wt{N}_K} =: \wt{\tspace}$ be
\begin{align*}
	\wt{W}_{i_1 \cdots i_{K-1} i_K} = \begin{cases}
	    W_{i_1 \cdots i_{K-1} i_K}, & i_K \leq N_K, \\
		0, & i_K > N_K,
	\end{cases}
\end{align*}
and let the random data $\wt{X} \in \wt{\tspace}$ be infinitely exchangeable along mode-$K$ such that $\wt{X}_{i_1 \cdots i_{K-1} i_K} = X_{i_1 \cdots i_{K-1} i_K}$ when $i_K \in [N_K]$. We can verify that $\wt{W}$ is balanced along mode-$K$ and
\begin{align*}
	\lambda  \prn{\<\wt{W}, \wt{X} \>_{\wt{\tspace}} - \prod_{l=1}^{K-1} N_l N \cdot \wb{\wt{W}} \wb{\wt{X}}} & = \lambda \prn{\langle W , X \rangle_{\tspace} - \prod_{l=1}^K N_l \cdot \wb{W} \wb{\wt{X}}} \cas \lambda \prn{\langle W, X \rangle_{\tspace} - \prod_{l=1}^K N_l \cdot \wb{W} \mu_P }.
\end{align*}
Taking $\wt{N}_K \to \infty$, as $\wt{W}_{k} \times_{k} \proj_{\ones_{N_{k}}}^\perp = 0$ and $\norm{\wt{W}_{K} \times_{K} \proj_{\ones_{\wt{N}_K}}^\perp }_{\wt{\tspace}}^2 \to \norm{W}_{\tspace}^2$, we substitute $\epsilon_{\wt{N}_K} \to 0$ into Theorem~\ref{thm:tensor-hoeffding} to recover the Hoeffding bound for independent $X$ along mode-$K$. For the Bernstein bound, additionally making use of the fact that $\linf{\wb{w}_K} \to \linf{w}$ and
\begin{align*}
	\frac{1}{\wt{N}_K} \norm{\wt{X} \times_K \proj_{\ones_{\wt{N}_K}}^\perp}_{\tspace}^2 \to \sigma_P^2 
\end{align*}
completes the proof. The proof for the matrix-valued result is identical. We omit the repetitive details.

\subsection{Proof of Corollary~\ref{cor:matrix-valued-combinatorial}} \label{proof:matrix-valued-combinatorial}
We are able to apply Theorem~\ref{thm:matrix-valued-bernstein} since Assumption~\ref{assmp:commutativity} holds. Using properties of Kronecker product of matrices (see~\cite[Chp.~4]{horn1994topics}), we verify that
\begin{align*}
	\norm{X_k} = \norm{e_{\pi(k)} \otimes I_n} = \norm{e_{\pi(k)}} \cdot \norm{I_n}= 1,
\end{align*}
and $\norm{\wt{X}_k} = 1$. We now compute the variance parameters from
\begin{align*}
	\Sigma & = \frac{1}{N} \sum_{k=1}^N (X_k  - \wb{X}) (X_k - \wb{X})^\top = \frac{1}{N} \sum_{k=1}^N \prn{e_k \otimes I_n} \prn{e_k \otimes I_n}^\top - \prn{\frac{1}{N} \sum_{k=1}^N e_k \otimes I_n} \prn{\frac{1}{N} \sum_{k=1}^N e_k \otimes I_n}^\top \\
	& = \frac{1}{N} \sum_{k=1}^N e_k e_k^\top \otimes I_n  - \prn{\frac{1}{N} \ones_N \otimes I_n}  \prn{\frac{1}{N} \ones_N \otimes I_n}^\top = \frac{1}{N} \cdot  \proj_{\ones_N}^\perp \otimes I_n, \\
	\wt{\Sigma} & = \frac{1}{N} \sum_{k=1}^N (\wt{X}_k - \wb{\wt{X}})^\top (\wt{X}_k - \wb{\wt{X}}) = \frac{1}{N} \sum_{k=1}^N \prn{e_k \otimes I_m} \prn{e_k \otimes I_m}^\top - \prn{\frac{1}{N} \sum_{k=1}^N e_k \otimes I_m} \prn{\frac{1}{N} \sum_{k=1}^N e_k \otimes I_m}^\top \\
	& =  \frac{1}{N} \sum_{k=1}^N e_k e_k^\top \otimes I_m - \prn{\frac{1}{N} \ones_N \otimes I_m}  \prn{\frac{1}{N} \ones_N \otimes I_m}^\top  = \frac{1}{N} \cdot \proj_{\ones_N}^\perp \otimes I_m,
\end{align*}
and $\max_{k \in [N]} \norm{X_k - \wb{X}}^2 = \max_{k \in [N]} \norm{\wt{X}_k - \wb{\wt{X}}}^2 = (1-1/N)^2 + (N-1) \cdot 1/N^2 = 1-1/N$, which implies
\begin{align*}
	\wt{\sigma}_X^2 = \wt{\sigma}_{\wt{X}}^2 = \frac{1}{N} + \epsilon_N \cdot \prn{1 - \frac{1}{N}}.
\end{align*} 
On the other hand, as
\begin{align*}
	\sum_{k=1}^N W_k W_k^\top = \sum_{i, j \in [N]} A_{i,j} A_{i,j}^\top, \qquad \text{and} \qquad \sum_{k=1}^N V_k^\top V_k = \sum_{i, j \in [N]} A_{i,j}^\top A_{i,j}.
\end{align*}
We will invoke the slightly modified Theorem~\ref{thm:matrix-valued-bernstein-stronger} with $\wb{W}\;\wb{X} = 0$ and the preceding calculated parameters. It remains to bounding $\linf{w}$ and $\linf{v}$ defined in Theorem~\ref{thm:matrix-valued-bernstein-stronger}, which are readily available below since
\begin{align*}
	\max_{k \in [N]}\norm{W_i X_k} = \max_{k \in [N]} \norm{\wt{X}_k V_i}  = \max_{j \in[N]} \norm{A_{i,j}} \leq R.
\end{align*}
Lastly, we apply Lemma~\ref{lem:MGF-tail-matrix} for the tail bound that follows. 

Finally, as a byproduct, we observe that our results immediately yield a Bernstein-type tail bound without any commutativity assumptions, obtained as a direct application of the combinatorial Bernstein inequality. Since analogous consequences can also be derived directly from the results in~\cite{mackey2014matrix}, we refrain from presenting them in the main text. We emphasize that the principal Bernstein-type MGF bounds established in the main paper under commutativity conditions have sharper constants and captures empirical variations within $X_k$ through $\wt{\sigma}_X^2$ and $\wt{\sigma}_{\wt{X}}^2$, which are both missing from the following argument. Indeed, observing that by introducing a uniformly random permutation $\pi$ of $[N]$, independent of $X_1,\cdots,X_N$, and using exchangeability to rewrite
\begin{align*}
\sum_{k=1}^N W_kX_k
\stackrel{d}{=}
\sum_{k=1}^N W_kX_{\pi(k)}.
\end{align*}
Conditioning on $X_1,\cdots,X_N$, the right-hand side becomes a combinatorial matrix sum to which a matrix Bernstein inequality can be applied. Assuming $\max_{i \in [n]}\norm{X_k} \leq 1$ and $\max_{k \in [N]} \norm{W_k} \leq L$ gives the following result.
\begin{proposition} Under the assumptions above and assuming without loss of generality $\wb{X}=0$, let 
\begin{align*}
    \sigma^2_W = \max \brc{\norm{\sum_{k=1}^N W_k W_k^\top}, \norm{\sum_{k=1}^N W_k^\top W_k}}, \qquad L =  \max_{k \in [N]} \norm{W_k},
\end{align*}
we have for all $\delta \in (0,1)$,
\begin{align*}
	\P \brc{\norm{\sum_{k=1}^N W_k X_k} \geq (1+\epsilon_N) \sigma_W \sqrt{2 \prn{\frac{1}{N} + \epsilon_N \cdot \prn{1 - \frac{1}{N}}} \cdot \log \frac{p+r}{\delta}} + \frac{2 (1+\epsilon_N) L}{3} \log \frac{p+r}{\delta}}  \leq \delta.
\end{align*}
\end{proposition}

\subsection{Proof of Corollary~\ref{cor:avg-effect}} \label{proof:avg-effect}
To apply Theorem~\ref{thm:tensor-bernstein}, we will use the bound in Corollary~\ref{cor:tensor-bernstein} and compute the parameters $\wt{\sigma}^2_{X,k}$. From
\begin{align*}
	\prn{\wb{X}_k\times_k \proj_{\ones_{N_k}}^\perp}_{i_1 \cdots i_k} = \ones_{i_l \in I_l, l \in [k-1]} \cdot \brc{\ones_{i_k \in I_k} \frac{n_{k+1} \cdots n_K}{N_{k+1} \cdots N_K} \prn{1 - \frac{n_k}{N_k}} - \ones_{i_k \not \in I_k} \frac{n_k \cdots n_K}{N_k \cdots N_K} },
\end{align*}
we have
\begin{align*}
	\norm{\wb{X}_k \times_k \proj_{\ones_{N_k}}^\perp}_{\tspace_k}^2& = \prod_{l=1}^{k-1} n_l \cdot \brc{\prn{\frac{n_{k+1} \cdots n_K}{N_{k+1} \cdots N_K}}^2 \prn{1 - \frac{n_k}{N_k}}^2 \cdot n_k + \prn{\frac{n_k \cdots n_K}{N_k \cdots N_K} }^2 \cdot \prn{N_k -n_k} } \\
	& = \prod_{l=1}^{k} n_l \cdot \prn{\frac{n_{k+1} \cdots n_K}{N_{k+1} \cdots N_K}}^2 \prn{1 - \frac{n_k}{N_k}}   \\
	\norm{\wb{X}_{k} \times_{k} \proj_{\ones_{N_{k}}}^\perp}_{\infty}^2 & = \ind_{n_k < N_k} \cdot \prn{\max \brc{1 - \frac{n_k}{N_k}, \frac{n_k}{N_k}} \cdot \frac{n_{k+1} \cdots n_K}{N_{k+1} \cdots N_K}}^2.
\end{align*}
Therefore
\begin{align*}
	\wt{\sigma}^2_{X,k} & = \frac{1}{N_k} \cdot \norm{\wb{X}_k \times_k \proj_{\ones_{N_k}}^\perp}_{\tspace_k}^2 + \epsilon_{N_{k}} \cdot \prod_{l=0}^{k-1} N_l \cdot \norm{\wb{X}_{k} \times_{k} \proj_{\ones_{N_{k}}}^\perp}_{\infty}^2 \nonumber \\
	& = \prod_{l=1}^{k-1} n_l \cdot \prn{\frac{n_{k+1} \cdots n_K}{N_{k+1} \cdots N_K}}^2 \prn{1 - \frac{n_k}{N_k}} \cdot \frac{n_k}{N_k}  + \epsilon_{N_k} \cdot \prod_{l=1}^{k-1} N_l \cdot \ind_{n_k < N_k} \cdot \prn{\max \brc{1 - \frac{n_k}{N_k}, \frac{n_k}{N_k}} \cdot \frac{n_{k+1} \cdots n_K}{N_{k+1} \cdots N_K}}^2 \\
	& = \brc{\frac{\prod_{l=1}^k n_l}{\prod_{l=1}^k N_l} \cdot  \prn{1 - \frac{n_k}{N_k}} + \epsilon_{N_k} \cdot \ind_{n_k < N_k} \max \brc{1 - \frac{n_k}{N_k}, \frac{n_k}{N_k}}^2} \cdot \prod_{l=1}^{k-1} N_l \cdot \prn{\frac{n_{k+1} \cdots n_K}{N_{k+1} \cdots N_K}}^2 \\
	& \leq \brc{\frac{\prod_{l=1}^k n_l}{\prod_{l=1}^k N_l} \cdot \prn{1 - \frac{n_k}{N_k}} + \epsilon_{N_k} \cdot \ind_{n_k < N_k}} \cdot \prod_{l=1}^{k-1} N_l \cdot \prn{\frac{n_{k+1} \cdots n_K}{N_{k+1} \cdots N_K}}^2,
\end{align*}
using $\linf{\wb{w}_k} \leq B_k$, for $\what{\mu} - \mu = \frac{\prod_{l=1}^K N_l}{\prod_{l=1}^K n_l} \cdot \prn{ \<W, X\>_{\tspace} - \prod_{l=1}^K N_l \cdot \wb{W}\;\wb{X} }$, we can invoke Theorem~\ref{thm:tensor-bernstein} (in particular, Eq~\eqref{eq:tensor-bernstein-c} and Corollary~\ref{cor:tensor-bernstein}) such that
\begin{align*}
	& \P \Bigg\{\what{\mu} - \mu \geq  \sigma \sqrt{2 \log \frac{1}{\delta} \cdot \max_{k \in [K]} \Delta_k }   +  \frac{2 \max_{k \in [K]} B_k(1 + \epsilon_{N_k})}{3 \alpha_K} \log \frac{1}{\delta} \Bigg\}  \leq \delta, 
\end{align*}
where $\sigma^2 = \prod_{l=1}^K N_l \cdot \prn{\norm{W }_{\tspace}^2 -  \norm{W_{0} }_{\tspace}^2}$, and
\begin{align*}
	\Delta_k & = \frac{1}{\prod_{l=1}^K N_l}  \cdot \frac{\prod_{l=1}^K N_l^2}{\prod_{l=1}^K n_l^2} \cdot \prod_{l=k+1}^K N_l \cdot \prn{1+ \epsilon_{N_{k}}}\wt{\sigma}^2_{X, k} \nonumber \\
	& \leq \frac{\prod_{l=1}^K N_l}{\prod_{l=1}^K n_l^2} \cdot \prod_{l=k+1}^K N_l \cdot \prn{1+ \epsilon_{N_{k}}} \cdot \brc{\frac{\prod_{l=1}^k n_l}{\prod_{l=1}^k N_l} \cdot  \prn{1 - \frac{n_k}{N_k}} + \epsilon_{N_k} \cdot \ind_{n_k < N_k}} \cdot \prod_{l=1}^{k-1} N_l \cdot \prn{\frac{n_{k+1} \cdots n_K}{N_{k+1} \cdots N_K}}^2 \nonumber \\
	& = \frac{\prod_{l=1}^k N_l}{\prod_{l=1}^k n_l^2} \cdot \prn{1+ \epsilon_{N_{k}}} \cdot \brc{\frac{\prod_{l=1}^k n_l}{\prod_{l=1}^k N_l} \cdot \prn{1 - \frac{n_k}{N_k}} + \epsilon_{N_k} \cdot \ind_{n_k < N_k}} \cdot \frac{\prod_{l=1}^{k} N_l}{N_k} \nonumber \\
	& = \frac{1+ \epsilon_{N_{k}}}{N_k} \cdot \brc{\frac{\prod_{l=1}^k N_l}{\prod_{l=1}^k n_l} \cdot  \prn{1 - \frac{n_k}{N_k}} + \epsilon_{N_k} \cdot \ind_{n_k < N_k} \prn{\frac{\prod_{l=1}^k N_l}{\prod_{l=1}^k n_l}}^2} \nonumber \\
	& = \frac{1+ \epsilon_{N_{k}}}{N_k \alpha_k } \cdot \prn{1 - \frac{n_k}{N_k} + \frac{\epsilon_{N_k}}{\alpha_k} \cdot \ind_{n_k < N_k}}.
\end{align*}
This completes the first part of the proof. For the second half, since $N_K \alpha \geq 1$, we can substitute $(n_1, \cdots, n_k) = (N_1, \cdots, N_{K-1}, \alpha N_K)$ into the preceding bound. The proof is done since under this choice
\begin{align*}
	\frac{1+ \epsilon_{N_{k}}}{N_k \alpha_k } \cdot \prn{1 - \frac{n_k}{N_k} + \frac{\epsilon_{N_k}}{\alpha_k} \cdot \ind_{n_k < N_k}} = \ind_{k=K} \cdot \frac{1 + \epsilon_{N_K}}{N_K \alpha} \cdot \prn{1 - \alpha + \frac{\epsilon_{N_K}}{\alpha}}.
\end{align*}

\subsection{Proof of Corollary~\ref{cor:sketching}} \label{proof:sketching}
Let $U= \begin{bmatrix} u_1 & \cdots & u_q \end{bmatrix}^\top = \begin{bmatrix} \wt{u}_1 & \cdots & \wt{u}_q \end{bmatrix}$ and $\theta_k = \begin{bmatrix} \theta_{k, 1} & \cdots \theta_{k, r} \end{bmatrix}$. Recall $U_k^\top U_k = U^\top D_k U$ where $D_k = \diag(d_{k,1}, \cdots, d_{k, q}) \succeq 0$ is a diagonal matrix with at most $q'$ nonzero elements. 

\paragraph{Step 1: Checking commutativity constraints by defining $(\wt{X}, V)$} The commutativity condition~\ref{assmp:commutativity} holds for $W_k = U_kU_k^\top$ and $X_k = \theta_k$. In fact
\begin{align*}
	W_k X_k & =  U^\top D_k U \theta_k = \sum_{l=1}^q d_l u_l u_l^\top \theta_k  = \underbrace{\begin{bmatrix}
			u_1^\top \theta_k \,\, \cdots \,\, u_q^\top \theta_k  & & & \\
			& u_1^\top \theta_k \,\, \cdots \,\, u_q^\top \theta_k & & \\
			& & \ddots & \\
			& & & u_1^\top \theta_k \,\, \cdots \,\, u_q^\top \theta_k
	\end{bmatrix}}_{q \times q^2 r} \underbrace{\begin{bmatrix}
			d_{k,1} u_{l,1} I_r \\
			\vdots \\
			d_{k,q} u_{q, 1} I_r \\
			d_{k,1} u_{l,2} I_r \\
			\vdots \\
			d_{k,q} u_{q,2} I_r \\
			\vdots \\
			d_{k,q} u_{q,q} I_r
	\end{bmatrix}}_{q^2r \times r}  \nonumber \\
	&  = \underbrace{\prn{I_q \otimes \begin{bmatrix} u_1^\top \theta_k & \cdots & u_q^\top \theta_k \end{bmatrix}}}_{:=\wt{X}_k} \underbrace{\begin{bmatrix}
		D_k \wt{u}_1 \otimes I_r \\
		\vdots \\
		D_k \wt{u}_q \otimes I_r
	\end{bmatrix}}_{=:V_k} = \wt{X}_k V_k.
\end{align*}

\paragraph{Step 2: Checking boundedness conditions and weight parameters}

We verify conditions for Theorem~\ref{thm:matrix-valued-bernstein}. For the data $X_k$ and $\wt{X}_k$, we have
\begin{align*}
	\norm{X_k}& =  \norm{\theta_k} \leq \norm{\theta_k}_F \leq 1, \\
	\norm{\wt{X}_k} & = \sqrt{\norm{\wt{X}_k \wt{X}_k^\top}} = \sqrt{\norm{\prn{I_q \otimes \begin{bmatrix} u_1^\top \theta_k & \cdots & u_q^\top \theta_k \end{bmatrix}} \prn{I_q \otimes \begin{bmatrix} u_1^\top \theta_k & \cdots & u_q^\top \theta_k \end{bmatrix}^\top} }} \nonumber \\
	& = \sqrt{\norm{I_q \otimes \begin{bmatrix} u_1^\top \theta_k & \cdots & u_q^\top \theta_k \end{bmatrix}\begin{bmatrix} u_1^\top \theta_k & \cdots & u_q^\top \theta_k \end{bmatrix}^\top}} \nonumber \\
	& = \sqrt{\norm{I_q \otimes \sum_{l=1}^q u_l^\top \theta_k \theta_k^\top u_l}} = \sqrt{\Tr \prn{U \theta_k \theta_k^\top U}} = \norm{\theta_k}_F \leq 1.
\end{align*}
For the weight matrices, we have $\min \{\linf{w}, \linf{v}\} \leq \linf{w} = \max_{k \in [N]} \norm{W_k} = \max_{k\in [N]} \norm{D_k} = L$, and
\begin{align*}
	\norm{\sum_{k=1}^N W_k W_k^\top} & = \norm{\sum_{k=1}^N D_k^2} \leq M, \\
	\norm{\sum_{k=1}^N V_k^\top V_k} & = \norm{\sum_{k=1}^N \sum_{l=1}^q \prn{D_k \wt{u}_l \otimes I_r}^\top \prn{D_k \wt{u}_l \otimes I_r} } = \norm{\sum_{k=1}^N \sum_{l=1}^q \wt{u}_l^\top D_k^2 \wt{u}_l \otimes I_r } \nonumber \\
	& = \sum_{l=1}^q \wt{u}_l^\top \prn{\sum_{k=1}^N D_k^2} \wt{u}_l = \Tr \prn{U^\top \sum_{k=1}^N D_k^2 U} = \Tr \prn{\sum_{k=1}^N D_k^2} \nonumber \\
	& \leq q' \norm{\sum_{k=1}^N D_k^2} = q' M.
\end{align*}
In the last inequality, we make use of the fact that each $D_k$ is diagonal with at most $q'$ nonzero entries. 

\paragraph{Step 3: Bounding the data variance parameters} Next, we calculate the variance parameters $\wt{\sigma}_X^2$ and $\wt{\sigma}_{\wt{X}}^2$. Straightforwardly as $X_k = \theta_k$,
\begin{align*}
	\wt{\sigma}_X^2 = \norm{\frac{1}{N} \sum_{k=1}^N (\theta_k - \wb{\theta}) (\wb{\theta} - \theta_k)^\top} + \epsilon_N \max_{k \in [N]} \norm{\theta_k - \wb{\theta}}^2.
\end{align*}
While for $\wt{X}$, since
\begin{align*}
	\wt{\Sigma} & = \frac{1}{N} \sum_{k=1}^N (\wt{X}_k - \wb{\wt{X}})^\top (\wt{X}_k - \wb{\wt{X}}) \nonumber \\
	& = \frac{1}{N} \sum_{k=1}^N \prn{I_q \otimes \begin{bmatrix} u_1^\top \theta_k & \cdots & u_q^\top \theta_k \end{bmatrix}}^\top \prn{I_q \otimes \begin{bmatrix} u_1^\top \theta_k & \cdots & u_q^\top \theta_k \end{bmatrix}} - \prn{I_q \otimes \begin{bmatrix} u_1^\top \wb{\theta} & \cdots & u_q^\top \wb{\theta} \end{bmatrix}}^\top \prn{I_q \otimes \begin{bmatrix} u_1^\top \wb{\theta} & \cdots & u_q^\top \wb{\theta} \end{bmatrix}} \nonumber \\
	& = \frac{1}{N} \sum_{k=1}^N I_q \otimes \begin{bmatrix} \theta_k^\top u_i u_j^\top \theta_k \end{bmatrix}_{i, j \in [q]} - I_q \otimes \begin{bmatrix} \wb{\theta} u_i u_j^\top \wb{\theta} \end{bmatrix}_{i, j \in [q]},
\end{align*}
we have
\begin{align*}
	\sigma^2_{\wt{X}} = \norm{\frac{1}{N} \sum_{k=1}^N I_q \otimes \begin{bmatrix} \theta_k^\top u_i u_j^\top \theta_k \end{bmatrix}_{i, j \in [q]} - I_q \otimes \begin{bmatrix} \wb{\theta} u_i u_j^\top \wb{\theta} \end{bmatrix}_{i, j \in [q]}} = \norm{\frac{1}{N} \sum_{k=1}^N \begin{bmatrix} (\theta_k - \wb{\theta})^\top u_i u_j^\top (\theta_k - \wb{\theta}) \end{bmatrix}_{i, j \in [q]}}.
\end{align*}
Consequently
\begin{align*}
		\sigma^2_{\wt{X}}  & =  \norm{\frac{1}{N} \sum_{k=1}^N \begin{bmatrix} (\theta_k - \wb{\theta})^\top u_i u_j^\top (\theta_k - \wb{\theta}) \end{bmatrix}_{i, j \in [q]}} = \norm{\frac{1}{N} \begin{bmatrix}
			\sum_{k=1}^N (\theta_k - \wb{\theta})^\top u_1 \\
			\vdots \\
			\sum_{k=1}^N (\theta_k - \wb{\theta})^\top u_q
	\end{bmatrix}  \begin{bmatrix}
	\sum_{k=1}^N (\theta_k - \wb{\theta})^\top u_1 \\
	\vdots \\
	\sum_{k=1}^N (\theta_k - \wb{\theta})^\top u_q
\end{bmatrix}^\top } \nonumber \\
& = \norm{\frac{1}{N} \begin{bmatrix}
		\sum_{k=1}^N (\theta_k - \wb{\theta})^\top u_1 \\
		\vdots \\
		\sum_{k=1}^N (\theta_k - \wb{\theta})^\top u_q
	\end{bmatrix}^\top   \begin{bmatrix}
		\sum_{k=1}^N (\theta_k - \wb{\theta})^\top u_1 \\
		\vdots \\
		\sum_{k=1}^N (\theta_k - \wb{\theta})^\top u_q
	\end{bmatrix}} = \norm{\sum_{l=1}^q u_l^\top \prn{\frac{1}{N} \sum_{k=1}^N (\theta_k - \wb{\theta})(\theta_k - \wb{\theta})^\top} u_l} \nonumber \\
& = \Tr \prn{U \prn{\frac{1}{N} \sum_{k=1}^N (\theta_k - \wb{\theta})(\theta_k - \wb{\theta})^\top} U^\top} = \frac{1}{N} \sum_{k=1}^N \norm{\theta_k - \wb{\theta}}_F^2.
\end{align*}
Combining with
\begin{align*}
	 \norm{\wt{X}_k - \wb{\wt{X}}}^2 & = \norm{I_q \otimes \begin{bmatrix} u_1^\top (\theta_k - \wb{\theta}) & \cdots & u_q^\top (\theta_k - \wb{\theta}) \end{bmatrix}}^2 \nonumber \\
	 & =  \norm{\begin{bmatrix} u_1^\top (\theta_k - \wb{\theta}) & \cdots & u_q^\top (\theta_k - \wb{\theta}) \end{bmatrix}}^2 = \sum_{l=1}^q \norm{u_l^\top (\theta_k - \wb{\theta})}^2 \nonumber \\
	 & = \Tr \prn{U (\theta_k - \wb{\theta}) (\theta_k - \wb{\theta}) U^\top} = \norm{\theta_k - \wb{\theta}}_F^2,
\end{align*}
it follows that
\begin{align*}
	\wt{\sigma}_X^2 \leq \wt{\sigma}_{\wt{X}}^2 = \frac{1}{N} \sum_{k=1}^N \norm{\theta_k - \wb{\theta}}_F^2 + \epsilon_N \max_{k \in [N]} \norm{\theta_k - \wb{\theta}}_F^2 = \wt{\sigma}_\theta^2.
\end{align*}

\paragraph{Step 4: Concluding the proof} We are now ready to substitute the above calculations into Theorem~\ref{thm:matrix-valued-bernstein}. As
\begin{align*}
	& \min \{\linf{w}, \linf{v}\} \leq L, \\
	& \max \brc{\wt{\sigma}_X^2 \norm{\sum_{k=1}^N W_k W_k^\top}, \wt{\sigma}^2_{\wt{X}} \norm{\sum_{k=1}^N V_k^\top V_k} } \leq q'\wt{\sigma}_\theta^2 M,
\end{align*}
applying Lemma~\ref{lem:MGF-tail-matrix} yields
\begin{align*}
	\P \brc{\norm{\sum_{k=1}^N U_k^\top U_k \theta_k - \wb{\theta}} \geq \wt{\sigma}_\theta \sqrt{2 (1+\epsilon_N) q'M \log \frac{q+r}{\delta}}  + \frac{2L}{3} (1+\epsilon_N) \log \frac{q+r}{\delta} } \leq \delta.
\end{align*}
This completes the proof.

\end{document}